\newtheorem{theorem}{Theorem}[section]
\newtheorem{proposition}[theorem]{Proposition}
\newtheorem{lemma}[theorem]{Lemma}
\newtheorem{definition}[theorem]{Definition}
\newtheorem{corollary}[theorem]{Corollary}
\newtheorem{remark}[theorem]{Remark}
\newtheorem{example}[theorem]{Example}
\newtheorem{claim}[theorem]{Claim}\crefname{claim}{Claim}{Claims}
\DeclareMathOperator{\Belkale}{Belkale}
\DeclareMathOperator{\coker}{coker}
\DeclareMathOperator{\edim}{edim}
\DeclareMathOperator{\eul}{eul}
\DeclareMathOperator{\Ext}{Ext}
\DeclareMathOperator{\ext}{ext}
\DeclareMathOperator{\GL}{GL}
\DeclareMathOperator{\Gr}{Gr}
\DeclareMathOperator{\Hom}{Hom}
\DeclareMathOperator{\Horn}{Horn}
\DeclareMathOperator{\Sym}{Sym}
\newcommand{\BS}{{\mathbb S}}
\newcommand{\C}{{\mathbb C}}
\newcommand{\N}{{\mathbb N}}
\newcommand{\R}{{\mathbb R}}
\newcommand{\ZZ}{{\mathbb Z}}
\newcommand{\Z}{{\mathbb Z}}
\newcommand{\boldk}{\boldsymbol k}
\newcommand{\boldn}{\boldsymbol n}
\newcommand{\boldalpha}{\boldsymbol\alpha}
\newcommand{\boldbeta}{\boldsymbol\beta}
\newcommand{\bolddelta}{\boldsymbol\delta}
\newcommand{\boldlambda}{\boldsymbol\lambda}
\newcommand{\boldomega}{\boldsymbol\omega}
\newcommand{\boldtilden}{\boldsymbol{\tilde n}}
\newcommand{\boldtildealpha}{\boldsymbol{\tilde\alpha}}
\newcommand{\boldtildebeta}{\boldsymbol{\tilde\beta}}
\newcommand{\boldtildeomega}{\boldsymbol{\tilde\omega}}
\newcommand{\boldOmega}{\boldsymbol\Omega}
\newcommand{\boldhatOmega}{\boldsymbol{\hat\Omega}}
\newcommand{\gl}{{\mathfrak{gl}}}
\newcommand{\g}{{\mathfrak{g}}}
\newcommand{\z}{{\mathfrak{z}}}
\renewcommand{\u}{{\mathfrak{u}}}
\newcommand{\CF}{{\mathcal F}}
\newcommand{\CG}{{\mathcal G}}
\newcommand{\CH}{{\mathcal H}}
\newcommand{\CJ}{{\mathcal J}}
\newcommand{\CK}{{\mathcal K}}
\newcommand{\CL}{{\mathcal L}}
\newcommand{\CM}{{\mathcal M}}
\newcommand{\CN}{{\mathcal N}}
\newcommand{\CS}{{\mathcal S}}
\newcommand{\CT}{{\mathcal T}}
\newcommand{\CU}{{\mathcal U}}
\newcommand{\CV}{{\mathcal V}}
\newcommand{\CW}{{\mathcal W}}
\renewcommand{\b}{{\mathfrak{b}}}
\begin{document}
\begin{frontmatter}
\title[Horn conditions for quivers and the moment map]{Horn conditions for quiver subrepresentations and~the~moment~map\protect\thanksref{T1}}
\thankstext{T1}{We are delighted to include this article as a tribute to the always inspiring work of Victor Guillemin.}
\begin{aug}
    \author{\fnms{Velleda} \snm{Baldoni}\ead[label=e1]{baldoni@mat.uniroma2.it}},
    \address{Dipartimento di Matematica, Universit\`a degli studi di Roma ``Tor Vergata'' \\
             Via della ricerca scientifica 1, 00133 Roma, Italy\\
             \printead{e1}}
    \author{\fnms{Mich\`ele} \snm{Vergne}\ead[label=e2]{michele.vergne@imj-prg.fr}},
    \address{Institut de Math\'ematiques de Jussieu-Paris Rive Gauche, Universit\'e Paris Cit\'e\!\!\!\!\!\! \\
             B\^atiment Sophie Germain, 8 place Aur\'elie Nemours, Boite Courrier 7012, 75205 Paris Cedex 13, France \\
             \printead{e2}}
    \and
    \author{\fnms{Michael} \snm{Walter}
            \ead[label=e3]{michael.walter@rub.de}}
    \address{Faculty of Computer Science, Ruhr University Bochum\\
             Universit\"atsstr.\ 150, 44801 Bochum, Germany\\
             \textit{and}\\
             Korteweg-de Vries Institute for Mathematics, Institute for Theoretical Physics, Institute for Logic, Language and Computation, and QuSoft \\
             University of Amsterdam, 1098 XG Amsterdam, Netherlands \\
             \printead{e3}}
\end{aug}
\received{\sday{18} \smonth{7} \syear{2021}}
\begin{abstract}
We give inductive conditions that characterize the Schubert positions of subrepresentations of a general quiver representation.
Our results generalize Belkale's criterion for the intersection of Schubert varieties in Grassmannians and refine Schofield's characterization of the dimension vectors of general subrepresentations.
This implies Horn type inequalities for the moment cone associated to the linear representation of the group $G=\prod_x \GL(n_x)$ associated to a quiver and a dimension vector $\boldn=(n_x)$.
\end{abstract}
\begin{keyword}[class=AMS]
\kwd[Primary ]{53D20} 
\kwd{14N15} 
\kwd[; secondary ]{15A42} 
\kwd{16G20} 
\kwd{22E47} 
\end{keyword}
\end{frontmatter}

\section{Introduction}\label{sec:intro}
Let $Q=(Q_0,Q_1)$ be a quiver, where~$Q_0$ is the finite set of vertices and~$Q_1$ the finite set of arrows.
We use the notation~$a\colon x\to y$ for an arrow~$a\in Q_1$ from~$x\in Q_0$ to~$y\in Q_0$.
We allow $Q$ to have cycles and multiple arrows between two vertices.
A \emph{dimension vector} for~$Q$ is a vector~$\boldn = (n_x)_{x\in Q_0}$ of nonnegative integers.

To every family of vector spaces~$\CV=(V_x)_{x\in Q_0}$, we associate the dimension vector $\dim\CV$ with components~$(\dim \CV)_x = \dim V_x$.
The space of \emph{representations} of the quiver~$Q$ on~$\CV$ is given by
\begin{align}\label{eq:quiver repr}
  \CH_Q(\CV) \coloneqq \bigoplus_{a:x\to y\in Q_1} \Hom(V_x, V_y),
\end{align}
whose elements are families~$v=(v_a)_{a\in Q_1}$ of linear maps~$v_a: V_x\to V_y$, one for each arrow $a\colon x\to y$. 
The Lie group~$\GL_Q(\CV) = \prod_{x\in Q_0} \GL(V_x)$ and its Lie algebra~$\gl_Q(\CV) = \bigoplus_{x\in Q_0} \gl(V_x)$ act naturally on~$\CV$ and on~$\CH_Q(\CV)$.
For~$g = (g_x)_{x\in Q_0} \in \GL_Q(\CV)$ and~$X = (X_x)_{x\in Q_0} \in \gl_Q(\CV)$, their actions on~$u = (u_x)_{x\in Q_0} \in \CV$ are given by~$gu \coloneqq (g_x u_x)_{x\in Q_0}$ and~$Xu \coloneqq (X_x u_x)_{x\in Q_0}$, respectively, while their actions on $v = (v_a)_{a\in Q_1} \in \CH_Q(\CV)$ are denoted by~$gvg^{-1} \coloneqq (g_y v_a g_x^{-1})_{a\colon x\to y \in Q_1}$ and~$Xv-vX \coloneqq (X_y v_a - v_a X_x)_{a\colon x\to y \in Q_1}$, respectively.

We write $\CS\subseteq\CV$ if $\CS=(S_x)_{x\in Q_0}$ is a family of subspaces~$S_x \subseteq V_x$; its dimension vector is called a \emph{subdimension vector} for $\CV$, i.e., satisfies $\dim S_x \leq \dim V_x$.
The family~$\CS$ is called a \emph{subrepresentation} of~$v\in\CH_Q(\CV)$ if $v_a S_x\subseteq S_y$ for every arrow $a\colon x\to y$ in~$Q_1$;
we abbreviate this condition by $v \CS \subseteq \CS$.

Schofield~\cite{MR1162487} characterized (inductively) the subdimension vectors~$\boldalpha$ such that any~$v\in\CH_Q(\CV)$ has a subrepresentation~$\CS$ with $\dim\CS = \boldalpha$.
We call such a dimension vector a \emph{Schofield subdimension vector} for~$\CV$ and denote this by~$\boldalpha\leq_Q \boldn$, where~$\dim\CV = \boldn$.
We also write $\boldalpha <_Q \boldn$ if in addition at least one of the inequalities $\alpha_x \leq n_x$ is strict.
As the notation suggests, these relations are transitive.

Consider
\begin{align*}
  \Gr_Q(\boldalpha,\CV) \coloneqq \prod_{x\in Q_0} \Gr(\alpha_x, V_x),
\end{align*}
where $\Gr(\alpha_x,V_x)$ denotes the Grassmannian of subspaces of~$V_x$ of dimension~$\alpha_x$.
The dimension of $\Gr_Q(\boldalpha,\CV)$ is given by $\sum_{x\in Q_0} \alpha_x \beta_x$, where~$\beta_x = \dim V_x - \alpha_x$.

Given a representation~$v\in\CH_Q(\CV)$ and a dimension vector~$\boldalpha$, we define the corresponding \emph{quiver Grassmannian} by
\begin{align*}
  \Gr_Q(\boldalpha,\CV)_v \coloneqq \{ \CS \in \Gr_Q(\boldalpha) : v\CS \subseteq \CS \}.
\end{align*}
In this language, a Schofield subdimension vector is a subdimension vector~$\boldalpha$ such that $\Gr_Q(\boldalpha,\CV)_v \neq\emptyset$ for every representation~$v\in\CH_Q(\CV)$.
In this case, the dimension of each irreducible component of the quiver Grassmannian~$\Gr_Q(\boldalpha,\CV)_v$ is, for generic $v\in\CH_Q(\CV)$, given by
\begin{align}\label{eq:euler form}
  \braket{\boldalpha,\boldbeta}
\coloneqq \sum_{x\in Q_0} \alpha_x \beta_x - \!\!\!\!\sum_{a:x\to y\in Q_1}\!\!\!\! \alpha_x\beta_y.
\end{align}
Thus, the codimension of~$\Gr_Q(\boldalpha,\CV)_v$ in~$\Gr_Q(\boldalpha,\CV)$ is $\sum_{a: x\to y\in Q_1}\alpha_x\beta_y$.

\subsection{Schubert varieties and \texorpdfstring{$Q$}{Q}-intersection}
It is natural to study the possible Schubert positions of quiver subrepresentations.
For this purpose, we introduce the notion of filtered dimension vector (partly inspired by the augmented quivers of Derksen-Weyman).

Fix a family $\CF=(F_x)_{x\in Q_0}$ of (complete) filtrations, where each~$F_x$ is a (complete) filtration on~$V_x$.
We call $(\CV,\CF)$ a \emph{filtered dimension vector} (see \cref{sec:defs}).
Let~$B_Q(\CV,\CF)=(B_x)_{x\in Q_0}$ denote the corresponding Borel subgroup of $\GL_Q(\CV)$, i.e., each~$B_x \subseteq \GL(V_x)$ is the Borel subgroup preserving the filtration~$F_x$.
Finally, let~$\boldOmega=(\Omega_x)_{x\in Q_0}$ be a \emph{Schubert variety} in~$\Gr_Q(\boldalpha,\CV)$, i.e.,~$\boldOmega$ is the closure of a $B_Q(\CV,\CF)$-orbit in~$\Gr_Q(\boldalpha,\CV)$.
Then we say that $\boldOmega$ is \emph{$Q$-intersecting} (in~$\CV$) if the intersection
\begin{align}\label{eq:intersection variety}
  \boldOmega_v \coloneqq \boldOmega \cap \Gr_Q(\boldalpha,\CV)_v
\end{align}
is nonempty for every $v\in\CH_Q(\CV)$.
In other words, $\boldOmega$ is $Q$-intersecting if every quiver representation on~$\CV$ has a subrepresentation in the Schubert variety~$\boldOmega$.
When $\boldOmega = \Gr_Q(\boldalpha,\CV)$ is the largest Schubert variety, then $\boldOmega$ is $Q$-intersecting if and only if $\boldalpha$ is a Schofield subdimension vector.
Thus, $Q$-intersection is a more refined notion.
The main result of this article is an inductive family of necessary and sufficient conditions for $\boldOmega$ to be $Q$-intersecting (\cref{thm:main} below).

An important example is the \emph{Horn quiver}~$H_s$, which has $s+1$~vertices and $s$~arrows:
\begin{equation}\label{eq:horn quiver}
\begin{aligned}
\begin{tikzpicture}\small
  \node (O) at (0,1) {$s+1$};
  \node (A) at (-1.5,0) {$1$};
  \node at (0,0) {\dots};
  \node (Z) at (1.5,0) {$s$};
  \draw[->] (A) edge (O);
  \draw[->] (Z) edge (O);
\end{tikzpicture}
\end{aligned}
\end{equation}
Let $0\leq r\leq n$, $V_x=\C^n$, and $\alpha_x=r$ for $x=1,\dots,s+1$.
Then, a Schubert variety $\boldOmega\subseteq \Gr_Q(\boldalpha,\CV)$ is an $(s+1)$-tuple of Schubert varieties~$\Omega_1,\dots,\Omega_s,\Omega_{s+1}$ in $\Gr(r,n)$.
The condition that $\boldOmega$ is $Q$-intersecting is equivalent to the condition that the Schubert homology classes $[\Omega_x]_{x=1}^{s+1}$ are intersecting (\cref{ex:horn quiver}).
Horn~\cite{MR0140521} suggested necessary and sufficient conditions for Schubert varieties to intersect.
The validity of Horn's criterion was established by Knutson-Tao~\cite{MR1671451} using a combinatorial approach that established the saturation conjecture for the Littlewood-Richardson coefficients.
Derksen-Weyman~\cite{MR1758750} gave an alternative proof using the theory of quiver representations, which was further simplified by Crawley-Boevey-Geiss~\cite{Cra-Bo-Ge} (see \cref{subsec:schofield} below).
Finally, Belkale~\cite{MR2177198} gave a geometric proof of a strengthened version of the Horn criterion and the saturation conjecture.

As in~\cite{MR2177198}, our inductive criterion for $\boldOmega$ to be $Q$-intersecting is based on a numerical quantity:
the expected dimension of the intersection variety~$\boldOmega_v$ defined in~\eqref{eq:intersection variety}.
Since the codimension of~$\Gr_Q(\boldalpha,\CV)_v$ in~$\Gr_Q(\boldalpha,\CV)$ is generically equal to $\sum_{a: x\to y\in Q_1}\alpha_x\beta_y$, the `expected dimension' of the intersection is given by
\begin{align*}
  \edim_{Q,\CF}(\boldOmega,\CV) \coloneqq \dim\boldOmega - \!\!\!\!\sum_{a\colon x\to y\in Q_1}\!\!\!\! \alpha_x \beta_y.
\end{align*}
It is easy to prove that if $\boldOmega$ is $Q$-intersecting then, for generic~$v$, the dimension of the intersection variety $\boldOmega_v$ is indeed equal to the expected dimension.
Thus, $\edim_{Q,\CF}(\boldOmega,\CV)\geq0$ is a necessary condition for $\boldOmega$ to be $Q$-intersecting.
However, this necessary condition is not sufficient (a simple example is given below in \cref{subsec:counter}).

Before giving a complete set of conditions we introduce some convenient notation.
Given a family of subspaces~$\CS\subseteq\CV$, we denote by~$\boldOmega(\CS,\CF)$ the Schubert variety determined by~$\CS$, i.e., the closure of the $B_Q(\CV,\CF)$-orbit of~$\CS$, and we let $\edim_{Q,\CF}(\CS,\CV)=\edim_{Q,\CF}(\boldOmega(\CS,\CF),\CV).$
We say~$\CS$ is~\emph{$Q$-intersecting in~$\CV$} if~$\boldOmega(\CS,\CF)$ is $Q$-intersecting in~$\CV$.
That is, for generic~$v\in \CH_Q(\CV)$,~$\CS$~is a subrepresentation of some point in the $B_Q(\CV,\CF)$-orbit of~$v$.
We denote this condition by $\CS \subseteq_Q \CV$, and write $\CS \subset_Q \CV$ if at least one $S_x$ is a proper subspace of~$V_x$.

As explained above, a necessary condition for~$\CS$ to be $Q$-intersecting in~$\CV$ is that~$\edim_{Q,\CF}(\CS,\CV)\geq0$.
It is also easy to see that the relation $\subseteq_Q$ is transitive (\cref{lem:transitive}):
if $\CT\subseteq_Q\CS$ and $\CS\subseteq_Q \CV$, then $\CT\subseteq_Q \CV$.
Our main result is that these two natural conditions are not only necessary but also sufficient:

\begin{theorem}\label{thm:main}
Let $\CV$ be a family of vector spaces, $\CF$ a family of filtrations, and $\CS$ a family of subspaces of~$\CV$.
Then, $\CS \subseteq_Q \CV$ if and only if
\begin{enumerate}[label=\emph{(\Alph*)},ref={(\Alph*)}]
\item\label{it:main A} $\edim_{Q,\CF}(\CS,\CV) \geq 0$,
\item\label{it:main B} $\CT \subset_Q \CV$ for every $\CT \subset_Q \CS$.
\end{enumerate}
\end{theorem}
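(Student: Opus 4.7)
The necessity is essentially contained in the preceding discussion: \ref{it:main A} follows because if $\CS\subseteq_Q\CV$, then for generic~$v$ the variety $\mathbold\Omega(\CS,\CF)_v$ is nonempty and thus has dimension at least $\edim_{Q,\CF}(\CS,\CV)$, while \ref{it:main B} is immediate from the transitivity of~$\subseteq_Q$ stated as \cref{lem:transitive}.

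For sufficiency I would proceed by induction on the codimension $\sum_{x\in Q_0}(\dim V_x-\dim S_x)$, the case $\CS=\CV$ being trivial. Assume \ref{it:main A} and \ref{it:main B}. Following the strategy of Belkale's geometric proof of the Horn conjecture, I would form the incidence variety
\[
  Z \coloneqq \bigl\{(\CS',v)\in\mathbold\Omega(\CS,\CF)\times\CH_Q(\CV) : v\CS'\subseteq\CS'\bigr\},
\]
together with its projections $\pi_1,\pi_2$. The fibers of $\pi_1$ are affine-linear of codimension $\sum_{a:x\to y}\mathbold\alpha_x\mathbold\beta_y$, so $Z$ is irreducible of dimension $\edim_{Q,\CF}(\CS,\CV)+\dim\CH_Q(\CV)$, which is at least $\dim\CH_Q(\CV)$ by \ref{it:main A}. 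Consequently $\CS\subseteq_Q\CV$ is equivalent to $\pi_2$ being dominant, equivalently to $d\pi_2$ being surjective at some smooth point of~$Z$.

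Suppose towards a contradiction that $d\pi_2$ is nowhere surjective. At a generic smooth point $(\CS,v)\in Z$ with $\CS$ in the open $B_Q$-orbit of $\mathbold\Omega(\CS,\CF)$ and $v$ generic subject to $v\CS\subseteq\CS$, the tangent space $T_{(\CS,v)}Z$ consists of pairs $(\xi,w)\in T_\CS\mathbold\Omega(\CS,\CF)\times\CH_Q(\CV)$ satisfying the linearized stability $w_a|_{S_x}\equiv\xi_y v_a-v_a\xi_x\pmod{S_y}$ for every arrow $a:x\to y$, with $T_\CS\mathbold\Omega(\CS,\CF)=\b_Q(\CV,\CF)\cdot\CS\subseteq\bigoplus_x\Hom(S_x,V_x/S_x)$. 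Dualizing a nonzero cokernel element of~$d\pi_2$ via the canonical pairings on $\CH_Q(\CV)$ and $\bigoplus_x\Hom(V_x/S_x,S_x)$ then yields an obstructing subfamily $\CT\subsetneq\CS$ whose defining conditions split into two: the $\xi$-part imposes Borel compatibility and $v$-stability, forcing $\CT\subseteq_Q\CS$; the $w$-part provides a genuine obstruction to extending~$\CT$ to subrepresentations of arbitrary perturbations of~$v$, yielding $\CT\not\subseteq_Q\CV$. Since $\CT$ is strict in at least one vertex, we obtain $\CT\subset_Q\CS$ with $\CT\not\subset_Q\CV$, contradicting~\ref{it:main B}.

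The main obstacle I anticipate is this duality step: rigorously passing from a cokernel functional of~$d\pi_2$ to a concrete obstructing subfamily $\CT$, and then verifying its two defining properties. This requires a careful cotangent analysis on~$\mathbold\Omega(\CS,\CF)$ that respects the Borel filtration, together with a genericity/specialization argument promoting a local tangent obstruction at a single generic~$v$ to the global statement $\CT\not\subseteq_Q\CV$. This is precisely the point where Belkale's single-Grassmannian argument must be genuinely extended to the quiver setting, and I would expect Schofield's inductive techniques to play a complementary role in organizing the combinatorics of the reduction to \ref{it:main B}.
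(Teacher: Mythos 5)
Your proposal of necessity is exactly right and matches the paper: \ref{it:main A} follows from the nonemptiness/dimension argument (\cref{lem:necessary}) and \ref{it:main B} is transitivity (\cref{lem:transitive}).

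For sufficiency, your setup is in the right territory --- the paper too reduces $\CS\subseteq_Q\CV$ to the dominance of a projection from an incidence variety, and then (in \cref{thm:Q-intersecting iff ext=0}) to the surjectivity of a differential at a generic point, which it identifies as the vanishing of the group $\Ext_{Q,\CF_\CS,\CF_{\CV/\CS}}(\CS,\CV/\CS)$. But the step you label ``the main obstacle'' is not a technical detail to be filled in; it is essentially the entire content of the theorem, and the heuristic you give for it would not become a proof by ``careful cotangent analysis.'' You assert that a nonzero cokernel element of $d\pi_2$ can be ``dualized via canonical pairings'' into a subfamily $\CT\subsetneq\CS$, and then that, without further work, ``the $\xi$-part'' gives $\CT\subseteq_Q\CS$ while ``the $w$-part'' gives $\CT\not\subseteq_Q\CV$. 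Neither assertion is justified: a covector in the cokernel does not canonically determine a family of subspaces, and you have no mechanism to ensure that whatever $\CT$ you extract is (i) a proper subfamily, (ii) $Q$-intersecting in $\CS$, and (iii) not $Q$-intersecting in $\CV$. The paper does not obtain $\CT$ by dualizing; it constructs $\CT$ concretely.

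The actual argument has three layers you have not captured. First, \cref{thm:ext via eul}: when $\ext_{Q,\CF,\CG}(\CV,\CW)>0$ there is a $Q$-intersecting $\CT\subset_Q\CV$ with $\eul_{Q,\CF_\CT,\CG}(\CT,\CW)<0$, produced by an induction on $d(\CV)$ in which $\CT$ arises from a \emph{generic kernel subrepresentation}: one fixes generic $(\Phi,v,w)$ with $\Phi\in\Hom_{Q,\CF,\CG}(v,w)$ and takes $\ker\Phi$. This is a genuine construction, not a dualization, and the proof requires two nontrivial dimension computations (\cref{claim:dim hom via eul,claim:hom ieq}). Second, a maximin argument (\cref{prp:unique maximin,prp:maximin point}) upgrades the kernel output to a $\CT\in P_Q(\CS,\CF_\CS)$ --- i.e.\ one whose generic intersection variety is a \emph{single point} --- which is needed for the expected-dimension bookkeeping. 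Third, the contradiction with \ref{it:main B} is obtained \emph{numerically}, not by a local obstruction argument: since $\CT\in P_Q(\CS,\CF_\CS)$ one has $\edim_{Q,\CF_\CS}(\CT,\CS)=0$, and combining this with the negativity of $\eul_{Q,\CF_\CT,\CF_{\CV/\CS}}(\CT,\CV/\CS)$ and the additivity \cref{eq:add eul 2nd} gives $\edim_{Q,\CF}(\CT,\CV)<0$, whence $\CT\not\subseteq_Q\CV$ by \cref{lem:necessary}. Your sketch contains none of these three layers; in particular you never explain how a local tangent obstruction at a single pair $(\CS,v)$ yields a \emph{numerical} deficiency $\edim_{Q,\CF}(\CT,\CV)<0$, which is what is actually needed. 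In short: your framework is correct up to the point where work begins, and from that point on the proposal is a plan that names the missing step but does not supply it.
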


In fact, we obtain slightly stronger results than \cref{thm:main}.
In conditions~\ref{it:main B}, we merely need to consider those~$\CT \subset_Q \CS$ such that the generic intersection variety is a point (\cref{thm:main refined}).

\Cref{thm:main} generalizes Belkale's criterion for the intersection of Schubert varieties in Grassmannians, and we believe that working in this general context elucidates the arguments.
The first ingredient of our proof is a generalization of Schofield's numerical computation~\cite{MR1162487} of the dimension of certain $\Ext$-groups to the filtered setting (\cref{thm:ext via eul}).
Here we follow closely (but do not rely on) Schofield's argument.
We note that an alternative proof of \cref{thm:ext via eul} was recently given by Bertozzi-Reineke~\cite{BR} using augmented quivers (see \cref{subsec:schofield}); however, their results do not imply \cref{thm:main}.
To obtain the simple inductive characterization in our main result, \cref{thm:main}, we use an argument on slopes inspired by Harder-Narasimhan filtration, adapting an argument of Belkale (\cref{sec:6}).

\subsection{Example}\label{subsec:counter}
To illustrate \cref{thm:main}, consider the following quiver:
\begin{equation}\label{eq:square quiver}
\raisebox{-.8cm}{\begin{tikzpicture}\small
  \node (x1) at (0,1.5) {$1$};
  \node (x2) at (1.5,1.5) {$2$};
  \node (x3) at (0,0) {$3$};
  \node (x4) at (1.5,0) {$4$};
  \draw[->] (x1) edge (x2);
  \draw[->] (x1) edge (x3);
  \draw[->] (x2) edge (x4);
  \draw[->] (x3) edge (x4);
\end{tikzpicture}}
\end{equation}
Let~$(\CV,\CF)$ be the filtered dimension vector with~$V_1=V_4=\C^2$ and $V_2=V_3=\C^3$, and where $F_x$ is the standard filtration for every vertex~$x$.
Then there are 172~$Q$-intersecting Schubert varieties, corresponding to 46~Schofield subdimension vectors.

For example, $\CS=(\C e_1, \C e_2 \oplus \C e_3, \C e_2 \oplus \C e_3, \C^2)$ is $Q$-intersecting, while $\smash{\hat\CS}=(\C e_1, \C e_2 \oplus \C e_3, \C e_1 \oplus \C e_2, \C^2)$ is not.
This is easy to see directly, since the associated Schubert varieties are
\begin{align*}
  \boldOmega&=(\{\C e_1\}, \Gr(2,3), \Gr(2,3), \{\C^2\}), \\
  \boldhatOmega&=(\{\C e_1\}, \Gr(2,3), \{\C e_1 \oplus \C e_2\}, \{\C^2\}),
\end{align*}
respectively, and for a generic representation~$v\in\CH_Q(\CV)$ the component~$v_{1\to3}$ does not map~$e_1$ into $\C e_1 \oplus \C e_2$.
Now, note that
\begin{align*}
  \edim_{Q,\CF}(\CS,\CV)&=\dim \boldOmega-(1+1)=(0+2+2+0)-(1+1)>0, \\
  \edim_{Q,\CF}(\smash{\hat\CS},\CV)&=\dim \boldhatOmega-(1+1)=(0+2+0+0)-(1+1)=0,
\end{align*}
so condition~\ref{it:main A} of \cref{thm:main} is satisfied for both~$\CS$ and $\smash{\hat\CS}$.
Thus, condition~\ref{it:main B} must be violated for~$\smash{\hat\CS}$, so there exists a family~$\CT$ of proper subspaces which is $Q$-intersecting in $\smash{\hat\CS}$, but not in~$\CV$.
Indeed, the family $\CT = (\C e_1, \C e_3, \C e_2, \C^2)$ has this property.
We discuss a more involved example involving Collins' `sun quiver'~\cite{Collins} in \cref{sec:sun quiver}.

\subsection{An inductive numerical criterion and Horn-type inequalities}\label{subsec:horn}
Inductively, \cref{thm:main} translates into the following criterion:

\begin{theorem}\label{thm:numercs}
$\CS \subseteq_Q \CV$ if and only if $\edim_{Q,\CF}(\CT,\CV) \geq 0$ for all $\CT \subseteq_Q \CS$.
\end{theorem}

Note that \cref{thm:numercs} amounts to a finite criterion since the right-hand side depends only on the Schubert variety determined by $\CT$, of which there are only finitely many.
This can be made particularly concrete by parameterizing the Schubert cells, which also makes the connection to Belkale's Horn-type inequalities directly apparent.

Let $\boldn=(n_x)_{x\in Q_0}$ be a dimension vector, and let $\CV$ be the family of standard complex vector spaces $V_x = \C^{n_x}$, equipped with the standard filtrations.
Any family $\CK \subseteq [\boldn]$, by which we mean that~$\CK=(K_x)_{x\in Q_0}$ consists of subsets $K_x \subseteq \{1,\dots,n_x\}$, determines a family~$\CS=(S_x)_{x\in Q_0}$ of subspaces $S_x = \oplus_{i \in K_x} e_i$, where $e_i$ denotes the standard basis of~$V_x$, with dimension vector $\boldsymbol k = (k_x)_{x\in Q_0} = (\lvert K_x\rvert)_{x \in Q_0}$, and hence a Schubert variety~$\boldsymbol\Omega$.
Any Schubert variety can be obtained in this way.
It is easy to see that if $K_x(1) < \dots < K_x(k_x)$ are the elements of~$K_x$ then the dimension of the Schubert variety determined by~$\CK$ is
\begin{align*}
  \dim\boldsymbol\Omega = \sum_{x\in Q_0} \sum_{j=1}^{k_x} \left( K_x(j) - j \right).
\end{align*}
Let us write $\CK \subseteq_Q [\boldn]$ to denote that $\CS \subseteq_Q \CV$.
Now, any family $\CL \subseteq [\boldsymbol k]$ rise to a family $\CT = (T_x)_{x\in Q_0}$ of subspaces $T_x = \oplus_{j=1}^{l_x} e_{K_x(L_x(j))} \subseteq S_x$, where $L_x(1) < \dots < L_x(l_x)$ are the elements of $L_x$ and $l_x = \lvert L_x\rvert$.
We may calculate that
\begin{align*}
  \edim_{Q,\CF}(\CT,\CV)
= \sum_{x\in Q_0} \sum_{j=1}^{l_x} \left( K_x(L_x(j)) - j \right)
\;-\!\!\!\! \sum_{a : x\to y \in Q_1} l_x (n_y - l_y).
\end{align*}
Accordingly, \cref{thm:numercs} translates into the following inductive numerical criterion:
$\CK \subseteq_Q [\boldsymbol n]$ if and only if
\begin{align}\label{eq:refined S}
  \sum_{x\in Q_0} \sum_{j=1}^{l_x} \left( K_x(L_x(j)) - j \right) \;\;\geq \!\!\!\!\sum_{a : x\to y \in Q_1} l_x (n_y - l_y)
\end{align}
for all $\CL \subseteq_Q [\boldsymbol k]$.
In the case of the Horn quiver, we recognize Belkale's inequalities.
The criterion in \cref{eq:refined S} is easy to test numerically.
We note that one may further restrict the families~$\CL$ that need to be considered (\cref{rem:opt test 2}).

\subsection{A natural Schofield criterion and augmented quivers}\label{subsec:schofield}
As a particular consequence of \cref{thm:main} we also obtain the following inductive characterization of Schofield subdimension vectors:

\begin{theorem}\label{thm:mainscalar}
Let $\boldalpha \leq \boldn$ be dimension vectors.
Then, $\boldalpha \leq_Q \! \boldn$ if and only~if
\begin{enumerate}[label=\emph{(\Alph*)},ref={(\Alph*)}]
\item\label{it:main As} $\braket{\boldalpha,{\boldn}-\boldalpha}\geq 0$,
\item\label{it:main Bs} $\boldbeta <_Q \boldn$ for every $\boldbeta <_Q \boldalpha$.
\end{enumerate}
\end{theorem}

Just like for \cref{thm:main}, we obtain in fact a slightly stronger characterization by restricting part~\ref{it:main Bs} to those $\boldbeta$'s for which the generic intersection variety is a point (\cref{thm:schofield refined}).
\Cref{thm:mainscalar} is readily translated into the following inductive numerical criterion:

\begin{theorem}\label{theo:numericalscalar}
$\boldalpha \leq_Q \boldn$
if and only if
$\braket{\boldbeta,\boldn-\boldbeta} \geq 0$ for all $\boldbeta \leq_Q \boldalpha$.
\end{theorem}

We note that \cref{theo:numericalscalar} does not follow right away from the Schofield criterion~\cite{MR1162487}, despite the latter looking very similar:
$\boldalpha \leq_Q \boldn$ if and only if $\braket{\boldbeta,\boldn-\boldalpha} \geq 0$ for all $\boldbeta \leq_Q \boldalpha$.
Note that the condition on~$\beta$ coincides with ours if and only if $\braket{\boldbeta,\boldalpha-\boldbeta}=0$.
Indeed, it follows from the strengthening of \cref{thm:mainscalar} discussed above that it suffices to restrict to such~$\boldbeta$ in order to characterize Schofield subdimension vectors.
To obtain the natural inductive characterization given in \cref{thm:mainscalar} (and its strengthening) or, equivalently, the numerical criterion of \cref{theo:numericalscalar}, we found it necessary to use a slope argument (see \cref{sec:6} for the more general filtered setting).

Derksen-Weyman~\cite{MR1758750} deduced the Horn inequalities for tensor products using an `augmented' quiver~$\tilde Q$ associated to~$Q$.
To see the relation, given a filtered dimension vector $(\CV,\CF)$, we define by~$\tilde n_{x,i} = \dim F_x(i)$ an ordinary dimension vector~$\boldtilden$ on an augmented quiver~$\tilde Q$ with vertices~$(x,i)$ for~$x\in Q_0$ and $i=1,\dots,\ell_x$, where $\ell_x$ denotes the length of the filtration~$F_x$.
Given a family of subspaces $\CS \subseteq \CV$, consider the subdimension vector $\boldtildealpha$ with $\alpha_{x,i} = \dim S_x \cap F_{x,i}$.
Then, $\CS \subseteq_Q \CV$ if and only if $\boldtildealpha \leq_{\tilde Q} \boldtilden$, so one could use Schofield's criterion or our inductive conditions for Schofield subdimension vectors to characterize $Q$-intersection.
However, the resulting criterion for $Q$-intersection is arguably less natural than our \cref{thm:main}, and it is also weaker, since in general there are in general many more subdimension vectors~$\boldtildebeta <_{\tilde Q} \boldtildealpha$ than $Q$-intersecting subfamilies $\CT \subset_Q \CS$.
We comment on the relation between the two sets in \cref{subsec:augmented}.

\subsection{Applications to representation theory and the moment map}\label{subsec:intro rep theo}
Another motivation to study $Q$-intersection comes from representation theory and symplectic geometry.
Indeed, if $K$ is a compact connected Lie group, the celebrated $[Q,R]=0$ or ``quantization commutes with reduction'' conjecture of Guillemin-Sternberg relates the quantization of a $K$-Hamiltonian manifold~$M$ to the image of the associated moment map.
In their original article, Guillemin-Sternberg established this conjecture when $M$ is a (smooth) compact K\"ahler manifold~\cite{GS1982qr}.
In the case of a projective variety~$M=\mathbb P(\mathcal C)$ associated to an algebraic cone~$\mathcal C$ invariant under a linear representation of a complex reductive group~$G$, Mumford's construction of the geometric quotient directly describes the action of~$G$ on polynomial functions on~$\mathcal C$ in terms of the moment map on~$\mathcal C$ associated to a compact form~$K$ of~$G$~\cite{NessMumford84}*{Appendix}.
In both cases, it follows that the image under the $K$-moment map of~$M$ (resp.~of~$\mathcal C$) modulo the coadjoint action of~$K$  is a rational convex polytope (resp.~a rational convex polyhedral cone), see also \cite{GS1982convex}*{Appendix}.
It is in general a difficult problem to describe these moment polytopes or cones explicitly and effectively.

Here we plainly consider the action of $G=\GL_Q(\CV)$ on the complex vector space~$\mathcal C = \CH_Q(\CV)$.
Let~$C_Q(\CV)$ denote the polyhedral cone spanned by the highest weights of irreducible representations of~$\GL_Q(\CV)$ that occur with nonzero multiplicity in~$\Sym^*(\CH_Q(\CV))$, the space of polynomial functions on~$\CH_Q(\CV)$.
Our aim is to describe this cone by inequalities associated to quiver subrepresentations.
It follows from the general theory described above that $C_Q(\CV)$ is the moment cone associated with a natural moment map and we will come back to this point momentarily.

The subcone $\Sigma_Q(\CV) \subseteq C_Q(\CV)$ generated by the weights of semi-invariants polynomials is of particular interest for invariant theory and moduli spaces of quiver representations (see King~\cite{MR1315461}, or Crawley-Boevey~\cite{MR1834739} for the double quiver case).
Derksen-Weyman~\cite{MR1758750} and Schofield-van den Bergh~\cite{MR1908144} showed that $\boldomega=(\omega_x)_{x\in Q_0}$ is a weight of a nonzero semi-invariant polynomial on~$\CH_Q(\CV)$ (that is, a polynomial that transforms by the character~$g=(g_x) \mapsto \prod_x \det(g_x)^{\omega_x}$ of~$\GL_Q(\CV)$) if and only if
\begin{align*}
  \sum_{x\in Q_0} n_x \omega_x = 0
\end{align*}
and, for all $\boldalpha <_Q \boldn$,
\begin{align*}
  \sum_{x\in Q_0} \alpha_x \omega_x\leq 0,
\end{align*}
where $n_x = \dim V_x$ for $x\in Q_0$.
Thus, the cone $\Sigma_Q(\CV)$ is determined by inequalities associated to Schofield subdimension vectors.

Similarly, the cone $C_Q(\CV)$ is determined by the $Q$-intersection of Schubert varieties and hence by our \cref{thm:main}.
Choose a Hermitian structure on~$V_x$, and let $U(V_x)$ be the maximally compact subgroup of $GL(V_x)$ consisting of unitary operators, with Lie algebra~$\u_x$.
We may identify $\sqrt{-1}\u_x$ with the space of Hermitian operators on~$V_x$.
Let us choose an orthonormal basis of each~$V_x$, and consider the Weyl chamber~$C_x$ of diagonal Hermitian matrices~$\lambda_x$ with nonincreasing real entries $\lambda_x(1)\geq\ldots\geq\lambda_x(n_x)$. When $\lambda_x$ is $\Z$-valued, it determines an irreducible representation $V_{\lambda}$ of $\GL(V_x)$.
Thus, the irreducible representations of $\GL_Q(\mathcal V)$ are of the form $V_{\boldlambda} = \bigotimes_{x\in Q_0} V_{\lambda_x}$, where $\boldlambda = (\lambda_x)_{x\in Q_0}$ is the highest weight.

The cone $C_Q(\CV)$ has an alternative description in terms of symplectic geometry.
Indeed, a moment map for the action of the maximally compact subgroup $U_Q(\mathcal V) = \prod_{x\in Q_0} U(V_x)$ is given by
\begin{align*}
  \mu\colon \CH_Q(\CV) \to \bigoplus_x \sqrt{-1}\u_x, \quad
  v=(v_a)_{a\in Q_1} \mapsto \mu(v) = (\mu_x(v))_{x\in Q_0},
\end{align*}
where~$\mu_x(v)$ is the Hermitian matrix $\sum_{y,b:y\to x} v_b v_b^* - \sum_{y,a : x\to y} v_a^* v_a$.
By the results of Guillemin-Sternberg and Mumford discussed above, an element~$\boldlambda$ of the Weyl chamber~$\prod_x C_x$ is in the cone $C_Q(\CV)$ if and only if $-\boldlambda$ is in the image of the moment map.

We may describe the cone~$C_Q(\CV)$ by an inductively defined set of explicit linear inequalities.
Indeed, a general result by Ressayre~\cite{ressayre2010geometric} (see also~\cite{VW}) implies that~$C_Q(\CV)$ consists of the points~$\boldlambda \in \prod_x C_x$ such that
\begin{align*}
  \sum_{x\in Q_0} \sum_{i=1}^{n_x} \lambda_x(i) = 0
\end{align*}
and, for all $\CK\subset_Q [\boldn]$,
\begin{align*}
  \sum_{x\in Q_0} \sum_{i \in K_x} \lambda_x(i) \leq 0.
\end{align*}
Thus, \cref{eq:refined S} gives a complete and explicit set of linear inequalities for the moment cone~$C_Q(\CV)$.
Following an argument of Ressayre~\cite{ressayrepc}, we also compare~$C_Q(\CV)$ with the cone~$\Sigma_{\tilde Q}(\tilde \CV)$ of weights of semi-invariants for the augmented quiver~$\tilde Q$.
We find that the saturation theorem of Derksen-Weyman~\cite{MR1758750} implies that the conditions above are also sufficient for the irreducible representation~$V_{\boldlambda}$ to appear in~$\Sym^*(\CH_Q(\CV))$, in other words, that the semigroup of highest weights is saturated.
In summary, we obtain the following result (see \cref{sec:rep theo}):

\begin{theorem}\label{thm:moment cone and rep theory summary}
For any highest weight $\boldlambda = (\lambda_x)_{x\in Q_0}$ of $\GL_Q(\CV)$, the following are equivalent:
\begin{enumerate}
\item\label{cond:moment cone} $-\boldlambda$ is in the image of the moment map,
\item\label{cond:highest weight cone} $\boldlambda \in C_Q(\CV)$,
\item $V_\lambda \subseteq \Sym^*(\CH_Q(\CV))$,
\item\label{cond:horn} $\sum\limits_{\crampedclap{x\in Q_0}} \sum_{i=1}^{n_x} \lambda_x(i) = 0$ and $\sum\limits_{\crampedclap{x\in Q_0}} \sum_{i \in K_x} \lambda_x(i) \leq 0$ for all $\CK \subseteq_Q [\boldn]$.
\end{enumerate}
The equivalence between~(\ref{cond:moment cone}), (\ref{cond:highest weight cone}), and~(\ref{cond:horn}) holds also when $\lambda$ is not integral.
Moreover, $\CK \subseteq_Q [\boldn]$ if and only if
\begin{align*}
  \sum\limits_{x\in Q_0} \sum_{j=1}^{l_x} \left( K_x(L_x(j)) - j \right) \;\;\geq \!\!\!\!\sum_{a : x\to y \in Q_1} l_x (n_y - l_y)
\end{align*}
for all $\CL \subseteq_Q [\boldk]$, using the notation of \cref{eq:refined S}.
\end{theorem}

We previously announced this result in~\cite{quivershort}.
Recently, Bertozzi-Reineke~\cite{BR} gave a similar characterization of the image of the moment map based on \cref{thm:ext via eul}, which they proved using augmented quivers.
In \cref{sec:sun quiver}, we give a minimal complete description of~$C_Q(\CV)$ for the `sun quiver'~\cite{Collins} mentioned above.

\subsection{Notation and conventions}
The complement of a subset~$X \subseteq Y$ will be denoted by~$X^c \coloneqq Y \setminus X$.

All vector spaces will be fi\-nite-di\-men\-si\-o\-nal complex vector spaces.
Given a vector space~$V$, we write $\dim V$ for its (complex) dimension, and, for any~$0\leq r\leq\dim V$, we denote by~$\Gr(r,V)$ the Grassmannian that consists of the subspaces of dimension~$r$ of~$V$.

We use calligraphic and bold letters to denote families of objects labeled by the vertex set~$Q_0$ of a quiver.
For example, $\CV=(V_x)_{x\in Q_0}$ will be a family of vector spaces indexed by the set~$Q_0$,
$\CJ=(J_x)_{x\in Q_0}$ a family of subsets~$J_x$ of~$\N=\{1,2,\dots\}$, and~$\boldalpha=(\alpha_x)_{x\in Q_0}$ will be a family of natural numbers.
We write $\Gr_Q(\boldalpha,\CV)$ for the product of Grassmannians $\Gr(\alpha_x, V_x)$, $\dim\CV$ for the vector of dimensions~$\dim V_x$, etc.
The total dimension of~$\CV$ is denoted by~$d(\CV) = \sum_{x\in Q_0} \dim V_x$.
Such families of objects naturally inherit operations and relations.
Thus, given $\boldalpha$ and $\boldbeta$, we write $\boldalpha \leq \boldbeta$ if $\alpha_x \leq \beta_x$ for every~$x\in Q_0$, and we define the maps $\boldalpha\pm\boldbeta$ by $(\boldalpha\pm\boldbeta)_x = \alpha_x\pm\beta_x$.
Similarly, if~$\CS$ and~$\CV$ are families of vector spaces then we write $\CS\subseteq\CV$ if $S_x\subseteq V_x$ for every $x\in Q_0$.
We write~$\CS\subset \CV$ if~$\CS\subseteq\CV$ and~$S_x$ is a proper subspace of~$V_x$ for \emph{at least one} $x\in Q_0$.

\section{Quiver Grassmannians and \texorpdfstring{$Q$}{Q}-intersection}\label{sec:defs}
\begin{definition}[Filtered vector space]\label{def:filtered vector space}
A (complete) \emph{filtration} $F$ on a vector space $V$ is a chain of subspaces
\[ \{0\} = F(0) \subseteq F(1) \subseteq \cdots \subseteq F(i) \subseteq F({i+1}) \subseteq \dots \subseteq F(\ell) = V, \]
such that $\dim F(i+1)\leq\dim F(i)+1$ for all $i=0,\dots,\ell-1$ (i.e., the dimensions increase by at most one in each step).
We call the pair $(V,F)$ a \emph{filtered vector space}.
\end{definition}

The distinct subspaces in a filtration determines a flag.
However, note that the subspaces $F(i)$ need \emph{not} be strictly increasing.
If~$S$ is a subspace of~$V$, then~$S$ inherits the filtration~$F_S(i) \coloneqq F(i)\cap S$, and the quotient space $V/S$ inherits the filtration $F_{V/S}(i)\coloneqq(F(i)+S)/S$.
We will now consider the analogue definitions for families of vector spaces and filtrations.

\begin{definition}[Filtered dimension vector]
Let $\CV = (V_x)_{x\in Q_0}$ be a family of vector spaces.
A \emph{filtration} on~$\CV$ is a family $\CF = (F_x)_{x\in Q_0}$ where each~$F_x$ is a filtration on~$V_x$.
We say that the pair $(\CV,\CF)$ is a \emph{filtered dimension vector}.
\end{definition}

Let $\CS\subseteq\CV$, i.e., $S_x \subseteq V_x$ for every~$x\in Q_0$.
We denote by~$\CV/\CS$ the family of vector spaces~$(V_x/S_x)_{x\in Q_0}$.
If~$\CF$ is a filtration on~$\CV$ then we obtain a filtration~$\CF_\CS$ on~$\CS$ and a filtration~$\CF_{\CV/\CS}$ on the quotient~$\CV/\CS$.

A filtered dimension vector~$(\CV,\CF)$ determines a Borel subgroup of~$\GL_Q(\CV)$, namely~$B_Q(\CV,\CF) = \prod_{x\in Q_0} B_x$, where~$B_x$ is the Borel subgroup of~$\GL(V_x)$ preserving the filtration~$F_x$.
By definition, a \emph{Schubert cell}~$\boldOmega^0 = (\Omega^0_x)_{x\in Q_0}$ is a~$B_Q(\CV,\CF)$-orbit in~$\Gr_Q(\boldalpha,\CV)$.
Its closure $\boldOmega=(\Omega_x)_{x\in Q_0}$ is called a \emph{Schubert variety}.
In other words, each~$\Omega^0_x$ ($\Omega_x$) is a Schubert cell (variety) in~$\Gr(\alpha_x,V_x)$.

We can describe the Schubert varieties more concretely:
Let~$\boldn = (n_x)_{x\in Q_0}$ be a dimension vector.
For~$x\in Q_0$, let~$V_x=\C^{n_x}$, with standard basis~$(e_j)_{1\leq j\leq n_x}$, and consider the standard filtration~$F_x$ corresponding to the Borel subgroup~$B_x$ that consists of the upper-triangular matrices in~$GL(n_x)$.
Let~$\boldalpha$ be a dimension vector such that~$\boldalpha \leq \boldn$.
Let~$\CJ = (J_x)_{x\in Q_0}$ be a family of subsets, where each~$J_x$ is a subset of~$\{1,\dots,n_x\}$ of cardinality~$\alpha_x$.
Then, $S_{J_x} \coloneqq \bigoplus_{j \in J_x} \C e_j$ is a subspace of~$V_x$ of dimension~$\alpha_x$.
Let~$\Omega^0(J_x)$ denote the orbit of~$S_{J_x}$ under the action of~$B_x$, and~$\Omega(J_x)$ its closure.
It is easy to see that
\begin{align*}
  \Omega(J_x) = \{ S \in \Gr(\alpha_x, V_x) : \dim (S \cap F_x(J_x(a))) \geq a \text{ for } 1 \leq a \leq \alpha_x \},
\end{align*}
where $J_x(1) < \dots < J_x(\alpha_x)$ are the elements of~$J_x$.
Then, $\boldOmega(\CJ) = (\Omega(J_x))_{x\in Q_0}$ is a Schubert variety.
Moreover, every Schubert variety in~$\Gr_Q(\boldalpha,\CV)$ is of this form.
\label{eq:concrete schubert}
It is easy to verify that
\begin{align}\label{eq:schubert dim}
  \dim\boldOmega(\CJ) = \sum_{x\in Q_0} \dim\Omega(J_x), \quad \dim\Omega(J_x) = \sum_{a=1}^{\alpha_x} (J_x(a) - a).
\end{align}

\begin{definition}
Let $\CV = (V_x)_{x\in Q_0}$ be a family of vector spaces, $\boldalpha\leq\dim\CV$ a dimension vector, and $v\in\CH_Q(\CV)$ a representation.
Define the corresponding \emph{quiver Grassmannian} as
\begin{align*}
  \Gr_Q(\boldalpha,\CV)_v
\coloneqq \{ \CS \in \Gr_Q(\boldalpha,\CV) : v \CS \subseteq \CS \}.
\end{align*}
We say that $\boldalpha$ is \emph{Schofield subdimension vector} for $\CV$ if $\Gr_Q(\boldalpha,\CV)_v\neq\emptyset$ for every $v\in\CH_Q(\CV)$.
\end{definition}

Quiver Grassmannians have been the subject of intensive research.
We only mention the striking result that, in fact, every projective variety is a quiver Grassmannian~\cite{reineke2013every}.
For particular representations~$v$, cellular decompositions of~$\Gr_Q(\boldalpha,\CV)_v$ have been studied~\cite{QuiverCellular}.

We can decompose each quiver Grassmannians into subvarieties consisting of stable subspaces with fixed Schubert positions.
This gives rise to the central definitions of our article:

\begin{definition}[$Q$-intersecting]\label{def:q-intersecting}
Let $(\CV,\CF)$ be a filtered dimension vector, $\boldalpha\leq\dim\CV$ a dimension vector, and $\boldOmega\subseteq\Gr_Q(\boldalpha,\CV)$ a Schubert variety.
Given a representation $v\in\CH_Q(\CV)$, define
\begin{align*}
  \boldOmega_v
\coloneqq \Gr_Q(\boldalpha,\CV)_v \cap \boldOmega
= \{ \CS \in \boldOmega : v \CS \subseteq \CS \}.
\end{align*}
We say that $\boldOmega$ is \emph{$Q$-intersecting} in~$\CV$ if $\boldOmega_v\neq\emptyset$ for every $v\in\CH_Q(\CV)$.
\end{definition}

In other words, $\boldOmega$ is $Q$-intersecting if, for every $v\in\CH_Q(\CV)$, the Schubert variety~$\boldOmega$ contains a subrepresentation of~$v$.
In this case, we call the variety $\boldOmega_v$ for generic $v$ the \emph{generic intersection variety}.

Clearly, a necessary condition for $\boldOmega$ to be $Q$-intersecting is that $\boldalpha$ is a Schofield subdimension vector.
As we will see in \cref{lem:generic}, $\boldOmega$ is $Q$-intersecting if and only if $\boldOmega_v\neq\emptyset$ for generic~$v\in\CH_Q(\CV)$.

\begin{example}[Horn quiver]\label{ex:horn quiver}
For the Horn quiver~\eqref{eq:horn quiver} and the constant dimension vector $\boldalpha=(r,\dots,r)$, the problem of determining the $Q$-intersection of Schubert varieties in $\Gr_Q(\boldalpha,\CV)$ is equivalent to the problem of determining the intersection of Schubert classes in $\Gr(r,n)$.

Indeed, let $\Omega_1$, \dots, $\Omega_{s+1}$ be Schubert varieties in~$\Gr(r,n)$.
By Kleiman's moving lemma, the homology classes $[\Omega_x]_{x=1}^{s+1}$ are intersecting in $\Gr(r,n)$ if and only if, for every $g_1,\dots,g_{s+1}\in\GL(n)$ there exists a point $S \in \bigcap_{x=1}^{s+1} g_x \Omega_x$.
Define $v_{x\to s+1} \coloneqq g_{s+1}^{-1} g_x$ for $x=1,\dots,s$.
Then $v = (v_{x\to s+1})_{x=1}^s$ is a representation of $H_s$.
Now consider $\boldOmega = (\Omega_1,\dots,\Omega_{s+1})$, which is a Schubert variety in $\Gr_Q(\boldalpha,\CV)$.
Define $S_x = g_x^{-1} S \in \Omega_x$.
Then, $\CS=(S_x)_{x=1}^{s+1} \in \boldOmega$.
Moreover, $v_{x\to s+1} S_x = S_{s+1}$ for $x=1,\dots,s$.
This means that $\CS\in\boldOmega_v$.
The set of $v$ so obtained is dense in~$\CH_Q(\CV)$, since each $v_{x\to s+1}$ can be an arbitrary invertible map~$V_x\to V_{s+1}$.
We conclude that $\boldOmega$ is $H_s$-intersecting if and only if the homology classes $[\Omega_x]_{x=1}^{s+1}$ are intersecting in $\Gr(r,n)$.
\end{example}

Belkale~\cite{MR2177198} has determined an inductive criterion for Schubert classes in~$\Gr(r,n)$ to intersect.
Our aim in this article is to obtain a similar inductive criterion for when a Schubert variety $\boldOmega = (\Omega_x)_{x\in Q_0}$ is $Q$-intersecting.

\section{Expected dimensions}\label{sec:edim}
In this section, we define the expected dimension of the generic intersection variety (\cref{def:edim}).

Given two families of vector spaces $\CV=(V_x)_{x\in Q_0}$ and $\CW=(W_x)_{x\in Q_0}$, define
\begin{align*}
  \CH_Q(\CV,\CW) &\coloneqq \!\!\!\!\!\bigoplus_{a:x\to y \in Q_1}\!\!\!\!\! \Hom(V_x,W_y), \\
  \g_Q(\CV,\CW) &\coloneqq \bigoplus_{x\in Q_0} \Hom(V_x,W_x).
\end{align*}
If $\CV=\CW$, the space $\CH_Q(\CV,\CV)$ is simply $\CH_Q(\CV)$, introduced previously in \cref{eq:quiver repr}, and $\g_Q(\CV,\CV)$ is the Lie algebra~$\gl_Q(\CV)$ of~$\GL_Q(\CV)$.

If $\dim\CV=\boldalpha$ and $\dim\CW=\boldbeta$ then the dimension of $\CH_Q(\CV,\CW)$ is given by
$\sum_{a:x\to y\in Q_1} \alpha_x\beta_y$.
As it depends only on $Q$, $\boldalpha$, and $\boldbeta$, we also denote this expression by $\dim \CH_Q(\boldalpha,\boldbeta)$.
Similarly, the dimension of~$\g_Q(\CV,\CW)$ is
$\sum_{x\in Q_0} \alpha_x\beta_x$.
Thus,
\begin{align*}
  \braket{\boldalpha,\boldbeta} = \dim \g_Q(\CV,\CW)-\dim\CH_Q(\CV,\CW),
\end{align*}
where $\braket{\boldalpha,\boldbeta}$ is the \emph{Euler form} defined in \cref{eq:euler form}.

The following proposition is well known.
We give a proof since we will below generalize it to compute the generic dimension of $\boldOmega_v$.

\begin{proposition}\label{prp:dim generic GrQv}
Let $\CV$ be a family of vector spaces and $\boldalpha$ a Schofield subdimension vector for~$\CV$.
Then, for generic $v\in\CH_Q(\CV)$, the dimension of each irreducible component of $\Gr_Q(\boldalpha,\CV)_v$ is given by
$\dim \Gr_Q(\boldalpha,\CV) - \dim \CH_Q(\boldalpha,\boldbeta) = \braket{\boldalpha,\boldbeta}$,
where~$\boldbeta = \dim\CV - \boldalpha$.
\end{proposition}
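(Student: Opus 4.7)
The plan is to study the incidence variety
\[
  Z = \{ (\CS, v) \in \Gr_Q(\mathbold\alpha, \CV) \times \CH_Q(\CV) : v\CS \subseteq \CS \}
\]
through its two natural projections. I would first analyze the projection $\pi_1 : Z \to \Gr_Q(\mathbold\alpha, \CV)$. Over a point $\CS$ the fiber consists of those $v \in \CH_Q(\CV)$ with $v_a(S_x) \subseteq S_y$ for every arrow $a : x \to y$ in $Q_1$. This is precisely the kernel of the surjective linear map $\CH_Q(\CV) \to \CH_Q(\CS, \CV/\CS)$ sending each component $v_a$ to the composition $S_x \hookrightarrow V_x \xrightarrow{v_a} V_y \twoheadrightarrow V_y/S_y$. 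Hence the fibers of $\pi_1$ are linear subspaces of $\CH_Q(\CV)$ of constant codimension $\dim \CH_Q(\mathbold\alpha, \mathbold\beta)$, so $\pi_1$ realises $Z$ as a vector bundle over the irreducible variety $\Gr_Q(\mathbold\alpha, \CV)$. In particular, $Z$ is irreducible and
\[
  \dim Z = \dim \Gr_Q(\mathbold\alpha, \CV) + \dim \CH_Q(\CV) - \dim \CH_Q(\mathbold\alpha, \mathbold\beta).
\]

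Next I would turn to the second projection $\pi_2 : Z \to \CH_Q(\CV)$, whose fiber over a representation $v$ is precisely the quiver Grassmannian $\Gr_Q(\mathbold\alpha, \CV)_v$. Because $\mathbold\alpha$ is assumed to be a Schofield subdimension vector, $\pi_2$ is surjective, hence dominant. The theorem on fiber dimensions applied to this dominant morphism of irreducible varieties, together with upper semicontinuity of fiber dimension, provides a dense open $U \subseteq \CH_Q(\CV)$ such that, for every $v \in U$, each irreducible component of $\Gr_Q(\mathbold\alpha, \CV)_v$ has dimension at most $\dim Z - \dim \CH_Q(\CV) = \braket{\mathbold\alpha, \mathbold\beta}$.

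To match this with a lower bound that is valid at every point of $\Gr_Q(\mathbold\alpha, \CV)_v$ (not merely at a generic one), I would realise the fiber as the zero locus of a global section of a vector bundle on $\Gr_Q(\mathbold\alpha, \CV)$. Namely, the tautological sub- and quotient bundles with fibers $S_x$ and $V_x/S_x$ assemble into a vector bundle $\CE = \bigoplus_{a : x \to y \in Q_1} \Hom(S_x^{\mathrm{taut}}, (V/S)_y^{\mathrm{taut}})$ of rank $\dim \CH_Q(\mathbold\alpha, \mathbold\beta)$, and each $v \in \CH_Q(\CV)$ induces a section $\sigma_v$ of $\CE$ whose vanishing locus is exactly $\Gr_Q(\mathbold\alpha, \CV)_v$. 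Standard dimension theory for zero loci of vector bundle sections then forces every irreducible component of $\Gr_Q(\mathbold\alpha, \CV)_v$ to have codimension at most $\dim \CH_Q(\mathbold\alpha, \mathbold\beta)$ in $\Gr_Q(\mathbold\alpha, \CV)$, i.e., dimension at least $\braket{\mathbold\alpha, \mathbold\beta}$. Combining this with the upper bound from the incidence argument yields the desired equality.

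The main obstacle is precisely this uniform lower bound: the incidence-variety computation by itself only guarantees that the \emph{top-dimensional} component of the generic fiber has the expected dimension, and a priori it is not ruled out that further components of smaller dimension persist for generic $v$. The zero-locus-of-a-section presentation of $\Gr_Q(\mathbold\alpha, \CV)_v$ is the essential device that supplies a \emph{local} lower bound at every point and thereby forces all components to have dimension exactly $\braket{\mathbold\alpha, \mathbold\beta}$.
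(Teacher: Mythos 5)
Your proposal is correct, and the core of it (the incidence variety $Z$, the vector-bundle structure of $\pi_1$, the dimension count $\dim Z = \dim\Gr_Q(\mathbold\alpha,\CV) + \dim\CH_Q(\CV) - \dim\CH_Q(\mathbold\alpha,\mathbold\beta)$, and the dominance of $\pi_2$) is exactly the paper's argument. The only place you diverge is in the final step. The paper invokes the full-strength version of the fiber-dimension theorem for dominant morphisms of irreducible varieties (what it calls ``Sard's theorem''), which already states that every irreducible component of every fiber has dimension at least $\dim Z - \dim\CH_Q(\CV)$, and that over a dense open the dimension is exactly this. You instead argue the two inequalities separately: an upper bound from generic fiber dimension plus semicontinuity, and a lower bound from realizing $\Gr_Q(\mathbold\alpha,\CV)_v$ as the zero locus of a section $\sigma_v$ of the rank-$\dim\CH_Q(\mathbold\alpha,\mathbold\beta)$ bundle $\CE$ on $\Gr_Q(\mathbold\alpha,\CV)$ and applying Krull's height theorem. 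This is valid and arguably more self-contained, but the worry you articulate at the end --- that the incidence-variety argument alone might leave room for small components to survive at a generic $v$ --- is not actually a gap in the paper's approach, because the standard Chevalley fiber-dimension theorem (e.g.\ Hartshorne, Ex.\ II.3.22(b)) already gives the lower bound on \emph{every} component of \emph{every} (nonempty) fiber of a dominant morphism of irreducible varieties. So your zero-locus device is an alternative way to re-derive that lower bound, not a genuinely necessary addition; both routes land in the same place.
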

\begin{proof}
Define the variety
\begin{align*}
  \mathbb X \coloneqq \{ (\CT, v) \in \Gr_Q(\boldalpha,\CV) \times \CH_Q(\CV) : v \CT \subseteq \CT \}.
\end{align*}
The map
\begin{align*}
  p\colon \mathbb X \to \Gr_Q(\boldalpha,\CV), \quad (\CT, v) \mapsto \CT
\end{align*}
equips $\mathbb X$ with the structure of a vector bundle over $\Gr_Q(\boldalpha,\CV)$.
Indeed, let $\CT\in\Gr_Q(\boldalpha,\CV)$.
We can write $\CV=\CT\oplus\CU$, choosing for each $x\in Q_0$ a complement~$U_x$ of~$S_x$ in~$V_x$.
Thus, $\dim(\CU)=\boldbeta$.
The fiber $p^{-1}(\CT)$ can be identified with
\begin{align}\label{eq:fiber X(S)}
  \mathbb X(\CT) = \{ v \in \CH_Q(\CV) : v \CT \subseteq \CT \}.
\end{align}
The right-hand side condition means that $v$ is of the form
\begin{align*}
  v = \begin{pmatrix}v_{00} & v_{01} \\ 0 & v_{11}\end{pmatrix},
\end{align*}
where $v_{00} \in \CH_Q(\CT)$, $v_{01} \in \CH_Q(\CU, \CT)$, and $v_{11} \in \CH_Q(\CU)$.
Thus,~$\mathbb X(\CT)$ is a vector subspace of $\CH_Q(\CV)$ of codimension $\dim \CH_Q(\CT, \CU) = \dim \CH_Q(\boldalpha, \boldbeta)$.
It follows that $\mathbb X$ is irreducible and of dimension
\begin{align}\label{eq:dim X}
  \dim\mathbb X = \dim \Gr_Q(\boldalpha,\CV) + \dim \CH_Q(\CV) - \dim \CH_Q(\boldalpha,\boldbeta).
\end{align}
We also have a map
\begin{align*}
  q\colon \mathbb X \to \CH_Q(\CV), \quad (\CT, v) \mapsto v,
\end{align*}
whose fibers can be identified with~$\Gr_Q(\boldalpha,\CV)_v$.
If $\boldalpha$ is a Schofield subdimension vector then the map~$q$ is surjective.
By the version of Sard's theorem for dominant maps between irreducible varieties, it follows that the image of~$q$ contains a nonempty Zariski-open subset~$Z \subseteq \CH_Q(\CV)$ such that, for $v\in Z$, each irreducible component of the fiber $\Gr_Q(\boldalpha,\CV)_v$ is of dimension equal to $\dim\mathbb X - \dim \CH_Q(\CV)$.
Comparing with \cref{eq:dim X}, we obtain that, for generic $v$, each irreducible component of $\Gr_Q(\boldalpha,\CV)_v$ is of dimension
\begin{align*}
  \dim\mathbb X - \dim \CH_Q(\CV)
= \dim \Gr_Q(\boldalpha,\CV) - \dim \CH_Q(\boldalpha,\boldbeta)
= \braket{\boldalpha, \boldbeta}.
\end{align*}
In the last step, we used that $\dim \Gr(\alpha_x, V_x) = \alpha_x \beta_x$ for~$x\in Q_0$.
\end{proof}

In particular, we see that a necessary condition for $\boldalpha$ to be a Schofield subdimension vector is that $\braket{\boldalpha,\boldbeta}\geq0$, where $\boldbeta=\dim\CV-\boldalpha$.
We now prove an analog of \cref{prp:dim generic GrQv} for generic intersection varieties.

\begin{proposition}\label{prp:dim generic Omegav}
Let $(\CV,\CF)$ be a filtered dimension vector, $\boldalpha\leq\dim\CV$ a dimension vector, and $\boldOmega\subseteq\Gr_Q(\boldalpha,\CV)$ a $Q$-intersecting Schubert variety.
Then, for generic $v\in\CH_Q(\CV)$, the dimension of each irreducible component of $\boldOmega_v$ is given by $\dim\boldOmega - \dim\CH_Q(\boldalpha,\boldbeta)$, where $\boldbeta=\dim\CV-\boldalpha$.
\end{proposition}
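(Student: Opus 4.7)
The plan is to mirror the proof of \cref{prp:dim generic GrQv} with $\mathbold\Omega$ in place of $\Gr_Q(\mathbold\alpha,\CV)$. Concretely, I would introduce the incidence variety
\begin{align*}
  \mathbb Y \coloneqq \{ (\CS,v)\in\mathbold\Omega\times\CH_Q(\CV) : v\CS\subseteq\CS \}
\end{align*}
together with the two projections $p\colon\mathbb Y\to\mathbold\Omega$ and $q\colon\mathbb Y\to\CH_Q(\CV)$, and study the dimensions of their generic fibers.

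The first step is to analyze $p$. For $\CS\in\mathbold\Omega$, the fiber $p^{-1}(\CS)$ equals $\mathbb X(\CS)$ as in \cref{eq:fiber X(S)}. Picking a complement $\CU$ of $\CS$ in $\CV$ shows that $\mathbb X(\CS)$ is a linear subspace of $\CH_Q(\CV)$ of codimension $\dim\CH_Q(\mathbold\alpha,\mathbold\beta)$, independently of $\CS$. In fact, $p$ is the restriction to $\mathbold\Omega$ of the vector bundle on $\Gr_Q(\mathbold\alpha,\CV)$ from the proof of \cref{prp:dim generic GrQv}, so $\mathbb Y$ is a vector bundle over $\mathbold\Omega$. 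Since the Schubert variety $\mathbold\Omega$ is irreducible, $\mathbb Y$ is irreducible as well, of dimension
\begin{align*}
  \dim\mathbb Y = \dim\mathbold\Omega + \dim\CH_Q(\CV) - \dim\CH_Q(\mathbold\alpha,\mathbold\beta).
\end{align*}

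The second step is to analyze $q$. Its fiber over $v\in\CH_Q(\CV)$ is precisely $\mathbold\Omega_v$. Because $\mathbold\Omega$ is assumed to be $Q$-intersecting, $q$ is surjective, and the target $\CH_Q(\CV)$ is irreducible. Hence by the dominant-map version of Sard's theorem (exactly as applied in \cref{prp:dim generic GrQv}), there is a nonempty Zariski-open subset of $\CH_Q(\CV)$ over which every irreducible component of the fiber has dimension $\dim\mathbb Y - \dim\CH_Q(\CV)$. Substituting the formula for $\dim\mathbb Y$ then yields the claimed value $\dim\mathbold\Omega - \dim\CH_Q(\mathbold\alpha,\mathbold\beta)$.

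I do not anticipate a genuine obstacle: the only substantive input beyond the earlier proof is the irreducibility of the Schubert variety $\mathbold\Omega$, which ensures irreducibility of $\mathbb Y$ and allows Sard's theorem to apply. The codimension computation for $\mathbb X(\CS)$ is identical to before since it depends only on $\dim\CS=\mathbold\alpha$ and $\dim(\CV/\CS)=\mathbold\beta$ and not on the Schubert cell in which $\CS$ lies.
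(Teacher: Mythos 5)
Your proof is correct and follows exactly the same route as the paper's: replace $\Gr_Q(\mathbold\alpha,\CV)$ by $\mathbold\Omega$ in the incidence variety of \cref{prp:dim generic GrQv}, observe the vector bundle structure over the irreducible Schubert variety $\mathbold\Omega$, and apply the dominant-map version of Sard's theorem to the surjective (since $\mathbold\Omega$ is $Q$-intersecting) projection $q$. The only difference is notational (you write $\mathbb Y$ where the paper reuses $\mathbb X$), and you spell out the irreducibility of $\mathbold\Omega$ which the paper leaves implicit.
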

\begin{proof}
The proof is entirely similar.
This time, we  consider
\begin{align}\label{eq:X prime}
  \mathbb X \coloneqq \{ (\CT,v) \in \boldOmega \times \CH_Q(\CV) : v \CT \subseteq \CT \},
\end{align}
which has now the structure of a vector bundle over $\boldOmega$, with fibers as in \cref{eq:fiber X(S)}.
Similarly to \cref{eq:dim X}, it follows that $\mathbb X$ is an irreducible variety of dimension
\begin{align*}
  \dim \mathbb X = \dim\boldOmega + \dim\CH_Q(\CV) - \dim\CH_Q(\boldalpha,\boldbeta).
\end{align*}
If $\boldOmega$ is $Q$-intersecting, the map
\begin{align}\label{eq:q map}
  q\colon \mathbb X \to \CH_Q(\CV), \quad (\CT, v) \mapsto v,
\end{align}
is surjective.
As its fibers can be identified with $\boldOmega_v$, we conclude as before that the dimension of each irreducible component is, for generic $v$, given by $\dim\boldOmega - \dim\CH_Q(\boldalpha,\boldbeta)$.
\end{proof}

Thus, we find that a necessary condition for $\boldOmega\subseteq\Gr_Q(\boldalpha,\CV)$ to be $Q$-intersecting is that $\dim\boldOmega - \dim\CH_Q(\boldalpha,\boldbeta) \geq 0$, where $\boldbeta=\dim\CV-\boldalpha$.
Using \cref{eq:schubert dim}, the latter condition is easy to evaluate for a Schubert variety~$\boldOmega(\CJ)$.
It amounts to
\begin{align*}
  \sum_{x\in Q_0} \sum_{a=1}^{\alpha_x} (J_x(a) - a) - \sum_{a:x\to y} \alpha_x (n_y - \alpha_y) \geq 0.
\end{align*}

Next, we study Schubert cells and varieties determined by families of subspaces.

\begin{definition}[$Q$-intersecting families of subspaces]
Let $(\CV,\CF)$ be a filtered dimension vector, $\boldalpha\leq\dim\CV$ a dimension vector, and~$\CS\in\Gr_Q(\boldalpha,\CV)$.
We define~$\boldOmega^0(\CS,\CF)$ as the~$B_Q(\CV,\CF)$-orbit of~$\CS$, and denote by~$\boldOmega(\CS,\CF)$ its closure, which is a Schubert variety.

We say that~$\CS$ is \emph{$Q$-intersecting} in~$\CV$ if~$\boldOmega(\CS,\CF)$ is $Q$-intersecting in the sense of \cref{def:q-intersecting} and denote this condition by~$\CS \subseteq_Q \CV$.
We write~$\CS \subset_Q \CV$ if in addition at least one subspace is a proper subspace.
\end{definition}

The following lemma is similar to \cite{BVW}*{Lemma~4.2.4}.

\begin{lemma}\label{lem:generic}
Let $(\CV,\CF)$ be a filtered dimension vector and $\CS\subseteq\CV$ a family of subspaces.
If $\CS$ is $Q$-intersecting in~$\CV$, there exists a nonempty Zariski-open set of $v\in\CH_Q(\CV)$ such that $\boldOmega^0(\CS,\CF)$ contains a subrepresentation of~$v$.

Conversely, if $\boldOmega^0(\CS,\CF)$ contains a subrepresentation of~$v$ for generic~$v\in\CH_Q(\CV)$, then $\CS$ is $Q$-intersecting in~$\CV$.
\end{lemma}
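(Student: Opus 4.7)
The plan is to reduce both directions to the geometry of the incidence variety
\begin{align*}
  \mathbb X = \{ (\CT,v) \in \mathbold\Omega(\CS,\CF)\times\CH_Q(\CV) : v\CT\subseteq\CT \}
\end{align*}
used in the proof of \cref{prp:dim generic Omegav}, together with its open analogue
\begin{align*}
  \mathbb X^0 = \{ (\CT,v) \in \mathbold\Omega^0(\CS,\CF)\times\CH_Q(\CV) : v\CT\subseteq\CT \}.
\end{align*}
Both are vector bundles (with the same rank) over $\mathbold\Omega(\CS,\CF)$ and the Schubert cell $\mathbold\Omega^0(\CS,\CF)$ respectively, with fibers as in \cref{eq:fiber X(S)}. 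Since $\mathbold\Omega^0(\CS,\CF)$ is Zariski-open and dense in the irreducible variety $\mathbold\Omega(\CS,\CF)$, it follows that $\mathbb X^0$ is a Zariski-open dense subset of the irreducible variety $\mathbb X$.

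For the first direction, assume $\CS\subseteq_Q\CV$, so that the projection $q\colon\mathbb X\to\CH_Q(\CV)$, $(\CT,v)\mapsto v$, of \cref{eq:q map} is surjective. By Chevalley's theorem, the image $q(\mathbb X^0)$ is a constructible subset of $\CH_Q(\CV)$. Since $q$ is Zariski-continuous, it sends the closure $\overline{\mathbb X^0}=\mathbb X$ into the closure $\overline{q(\mathbb X^0)}$, so $\CH_Q(\CV)=q(\mathbb X)\subseteq\overline{q(\mathbb X^0)}$, i.e., $q(\mathbb X^0)$ is Zariski-dense. A dense constructible subset of an irreducible variety contains a nonempty Zariski-open subset, yielding a nonempty Zariski-open $Z\subseteq q(\mathbb X^0)$; for every $v\in Z$, the cell $\mathbold\Omega^0(\CS,\CF)$ contains a subrepresentation of $v$.

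For the converse, suppose that for generic $v\in\CH_Q(\CV)$ the cell $\mathbold\Omega^0(\CS,\CF)$ contains a subrepresentation of $v$, so that $q(\mathbb X)$ contains a nonempty Zariski-open (hence dense) subset of $\CH_Q(\CV)$. The Schubert variety $\mathbold\Omega(\CS,\CF)$ is projective, so the projection $\mathbold\Omega(\CS,\CF)\times\CH_Q(\CV)\to\CH_Q(\CV)$ is a closed map. The incidence variety $\mathbb X$ is closed in $\mathbold\Omega(\CS,\CF)\times\CH_Q(\CV)$, being cut out by the closed conditions $v_a T_x\subseteq T_y$ for $a\colon x\to y$ in $Q_1$. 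Therefore $q(\mathbb X)$ is closed in $\CH_Q(\CV)$; being also dense, it equals $\CH_Q(\CV)$. This means $\mathbold\Omega(\CS,\CF)_v\neq\emptyset$ for every $v$, so $\CS\subseteq_Q\CV$.

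The only subtle point is justifying that the constructible set $q(\mathbb X^0)$ in the first direction actually contains a Zariski-open dense subset rather than merely a dense subset; this is immediate from the standard fact that a dense constructible subset of an irreducible variety contains such an open set, so no serious obstacle is expected.
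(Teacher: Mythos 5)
Your proposal is correct and follows essentially the same route as the paper: the forward direction uses that the surjective (hence dominant) map $q\colon\mathbb X\to\CH_Q(\CV)$ stays dominant when restricted to the dense open subset $\mathbb X^0$, so by Chevalley its image contains a nonempty Zariski-open set; the converse uses that $\mathbold\Omega(\CS,\CF)$ is projective, so $q(\mathbb X)$ is closed, and being dense must be all of $\CH_Q(\CV)$. The paper states these steps more tersely (invoking compactness of $\mathbold\Omega$), but the underlying argument is identical.
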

\begin{proof}
Abbreviate $\boldOmega=\boldOmega(\CS,\CF)$ and $\boldOmega^0=\boldOmega^0(\CS,\CF)$.
Consider the manifold
\begin{align}\label{eq:X0}
  \mathbb X^0 \coloneqq \{ (\CT,v) \in \boldOmega^0 \times \CH_Q(\CV) : v \CT \subseteq \CT \},
\end{align}
which is a nonempty Zariski-open subset of the irreducible variety~$\mathbb X$ defined in \cref{eq:X prime}.
If $\boldOmega$ is $Q$-intersecting, the map~$q$ defined in \cref{eq:q map} is surjective.
Thus, it is also dominant on any nonempty Zariski-open subset of~$\mathbb X$, hence in particular on $\mathbb X^0$.
It follows that the image of \cref{eq:q map} contains a nonempty Zariski-open subset of representations~$v\in\CH_Q(\CV)$ with the property that $\boldOmega^0$ contains a subrepresentation of~$v$.

Conversely, suppose that $\boldOmega^0$ contains a subrepresentation of~$v$ for generic $v\in\CH_Q(\CV)$.
Then, since the closure $\boldOmega$ of $\boldOmega^0$ is compact, it follows that $\boldOmega$ contains subrepresentations of all~$v\in\CH_Q(\CV)$.
\end{proof}

We now define the expected dimension as the expression in \cref{prp:dim generic Omegav}.

\begin{definition}[Expected dimension]\label{def:edim}
Let $(\CV,\CF)$ be a filtered dimension vector and $\CS\subseteq \CV$ a family of subspaces.
We define
\begin{align*}
  \edim_{Q,\CF}(\CS,\CV) \coloneqq \dim\boldOmega(\CS,\CF) - \dim\CH_Q(\CS,\CV/\CS)
\end{align*}
and call it the \emph{expected dimension} of the intersection variety~$\boldOmega(\CS,\CF)_v$.
\end{definition}

\noindent
Thus, the following lemma is clear.

\begin{lemma}\label{lem:necessary}
Let $(\CV,\CF)$ be a filtered dimension vector and $\CS\subseteq \CV$ a family of subspaces.
If $\CS$ is $Q$-intersecting in~$\CV$, then $\edim_{Q,\CF}(\CS,\CV)\geq0$.
\end{lemma}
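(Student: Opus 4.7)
The statement follows immediately by combining the definition of the expected dimension with \cref{prp:dim generic Omegav}. The plan is as follows.

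Set $\mathbold\alpha = \dim\CS$ and $\mathbold\beta = \dim\CV - \mathbold\alpha = \dim(\CV/\CS)$, and abbreviate $\mathbold\Omega = \mathbold\Omega(\CS,\CF)$. First I would observe that $\dim\CH_Q(\CS,\CV/\CS) = \dim\CH_Q(\mathbold\alpha,\mathbold\beta)$, since this dimension depends only on the dimension vectors of the source and target families; thus
\begin{align*}
  \edim_{Q,\CF}(\CS,\CV) = \dim\mathbold\Omega - \dim\CH_Q(\mathbold\alpha,\mathbold\beta).
\end{align*}

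Next, since $\CS$ is $Q$-intersecting in $\CV$, the Schubert variety $\mathbold\Omega$ is $Q$-intersecting in the sense of \cref{def:q-intersecting}. Applying \cref{prp:dim generic Omegav}, we see that for generic $v\in\CH_Q(\CV)$ each irreducible component of the intersection variety $\mathbold\Omega_v$ has dimension exactly equal to $\dim\mathbold\Omega - \dim\CH_Q(\mathbold\alpha,\mathbold\beta) = \edim_{Q,\CF}(\CS,\CV)$.

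Finally, since $\mathbold\Omega_v$ is nonempty (by definition of $Q$-intersection) for every $v$, and in particular for a generic $v$, it has at least one irreducible component, whose dimension must be a nonnegative integer. Combining with the previous equality yields $\edim_{Q,\CF}(\CS,\CV) \geq 0$, as claimed. There is no real obstacle here: the content of the lemma is entirely absorbed into \cref{prp:dim generic Omegav}, and the present lemma is a bookkeeping consequence.
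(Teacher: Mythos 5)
Your proof is correct and is exactly the argument the paper intends: the paper states ``the following lemma is clear'' immediately after \cref{def:edim} precisely because it follows from \cref{prp:dim generic Omegav} in the way you spell out (nonemptiness of $\mathbold\Omega_v$ for generic $v$ forces its component dimension, which equals $\edim_{Q,\CF}(\CS,\CV)$, to be nonnegative).
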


The converse of \cref{lem:necessary} is not in general true.
That is, it is possible that~$\edim_{Q,\CF}(\CS,\CV)\geq0$ even when~$\CS$ is not $Q$-intersecting.
We already saw an example of this when discussing the quiver~\eqref{eq:square quiver} in \cref{sec:intro}.


If $\CS$ is $Q$-intersecting and $\edim_{Q,\CF}(\CS,\CV)=0$, this means that the generic intersection variety $\boldOmega(\CS,\CF)_v$ is a finite set of points.
We now consider the important special case when it is a \emph{single} point.

\begin{definition}\label{def:point}
Let $(\CV,\CF)$ be a filtered dimension vector.
We define~$P_Q(\CV,\CF)$ as the set of subspaces $\CS\subseteq \CV$ such that, for generic $v\in\CH_Q(\CV)$, the intersection variety~$\boldOmega(\CS,\CF)_v$ is equal to a point.
\end{definition}

If $\CS\in P_Q(\CV,\CF)$ then $\CS$ is $Q$-intersecting in $\CV$ and $\edim_{Q,\CF}(\CS,\CV)=0$.
But the converse is not usually true, as the following example shows.

\begin{example}\label{ex:quiver W2}
Let $W_2$ be the following quiver:
\begin{equation*}
\begin{aligned}
\begin{tikzpicture}\small
  \node (A) at (-1.5,0) {$x_1$};
  \node (B) at (1.5,0) {$x_2$};
  \draw[->] (A) edge[bend left] node[above] {$a_1$} (B);
  \draw[->] (A) edge[bend right] node[below] {$a_2$} (B);
\end{tikzpicture}
\end{aligned}
\end{equation*}
Let $\CV=(\C^2,\C^2)$, $\CF$ the standard filtration, and consider $\CS=(\C e_2, \C e_2)$.
Then, $\boldOmega(\CS,\CF) = \Gr(1,2) \times \Gr(1,2)$ has dimension~$2$, and
\begin{align*}
  \edim_{Q,\CF}(\CS,\CV) = 2 - (1 + 1) = 0.
\end{align*}
Now let $v=(v_1,v_2)\in\CH_{W_2}(\CV) = \Hom(\C^2,\C^2) \oplus \Hom(\C^2,\C^2)$.
For generic~$v$, both~$v_1$ and~$v_2$ are invertible.
If $L$ is an eigenvector of~$v_2^{-1} v_1$, then we have $v_1(\C L) = v_2(\C L)$, which implies that $(\C L, v_1(\C L))$ is a subrepresentation of~$v$, and trivially contained in $\boldOmega(\CS,\CF)$.
Thus, $\CS$ is also $Q$-in\-ter\-sec\-ting.
However, $v_2^{-1} v_1$ is generically diagonalizable, in which case there are \emph{two} such subrepresentations of~$v$.
Thus, $\CS$ is not in $P_{W_2}(\CV,\CF)$.
\end{example}

Derksen-Schofield-Weyman~\cite{DSW} have determined the number of subrepresentations of a general quiver representation in terms of certain multiplicities.

The following lemma shows that the notion of $Q$-intersection is transitive.

\begin{lemma}\label{lem:transitive}
Let $(\CV,\CF)$ be a filtered dimension vector and $\CT\subseteq\CS\subseteq\CV$ families of subspaces.
Assume that $\CS \subseteq_Q \CV$ and $\CT \subseteq_Q \CS$, where $\CS$ is equipped with the filtration $\CF_\CS$.
Then, $\CT \subseteq_Q \CV$.
\end{lemma}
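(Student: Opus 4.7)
The plan is to use the equivalent characterization from \cref{lem:generic}: $\CS\subseteq_Q\CV$ holds iff $\mathbold\Omega^0(\CS,\CF)$ contains a subrepresentation of $v$ for generic $v\in\CH_Q(\CV)$. So what I need to prove is that, for generic $v\in\CH_Q(\CV)$, some element of $\mathbold\Omega^0(\CT,\CF)$ is a subrepresentation of~$v$.

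A preliminary step is an extension lemma: the restriction map $B_Q(\CV,\CF)\cap\mathrm{Stab}(\CS)\twoheadrightarrow B_Q(\CS,\CF_\CS)$ is surjective. Vertex-wise, this says that for a filtered vector space $(V,F)$ and a subspace $S\subseteq V$ with induced filtration $F_S$, any element of the Borel of $(S,F_S)$ lifts to a Borel element of $(V,F)$ stabilizing $S$; I would prove this by iteratively constructing a complement $U$ of $S$ with $F(i)=(F(i)\cap S)\oplus(F(i)\cap U)$ for all $i$, and then extending by the identity on $U$. Taking $B$-orbits, this implies that $\mathbold\Omega^0(\CT,\CF_\CS)\subseteq\mathbold\Omega^0(\CT,\CF)$ inside $\Gr_Q(\dim\CT,\CV)$ via $\Gr_Q(\dim\CT,\CS)\hookrightarrow\Gr_Q(\dim\CT,\CV)$.

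The main construction is the irreducible variety
\[
  \mathbb Z=\{(b,v)\in B_Q(\CV,\CF)\times\CH_Q(\CV):\CS\text{ is a subrepresentation of }b\cdot v\},
\]
which factors as $B_Q(\CV,\CF)\times\mathbb X(\CS)$ (cf.\ \cref{eq:fiber X(S)}) via $(b,v)\mapsto(b,b\cdot v)$. I consider the two natural maps $\pi(b,v)=v$ and $\rho(b,v)=(b\cdot v)|_\CS\in\CH_Q(\CS)$. By \cref{lem:generic} applied to $\CS\subseteq_Q\CV$, the map $\pi$ is dominant, while $\rho$ is surjective since every $v'\in\CH_Q(\CS)$ lifts to $\mathbb X(\CS)$ by extending by zero on a complement of $\CS$. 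Applying \cref{lem:generic} to $\CT\subseteq_Q\CS$ now supplies a nonempty Zariski-open $U_\CS\subseteq\CH_Q(\CS)$ on which $\mathbold\Omega^0(\CT,\CF_\CS)$ contains a subrepresentation. Then $\rho^{-1}(U_\CS)$ is nonempty Zariski-open in the irreducible variety $\mathbb Z$, so $\pi(\rho^{-1}(U_\CS))$ contains a nonempty Zariski-open $U_\CV\subseteq\CH_Q(\CV)$. For $v\in U_\CV$, pick $(b,v)\in\rho^{-1}(U_\CS)$; some $c\in B_Q(\CS,\CF_\CS)$ then makes $c\CT$ a subrepresentation of $(b\cdot v)|_\CS$, and the extension lemma lifts $c$ to $\tilde c\in B_Q(\CV,\CF)$ with $\tilde c|_\CS=c$. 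A short computation shows that $b^{-1}\tilde c\,\CT\in\mathbold\Omega^0(\CT,\CF)$ is then a subrepresentation of~$v$, and \cref{lem:generic} gives $\CT\subseteq_Q\CV$.

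The main obstacle is that for generic $v\in\CH_Q(\CV)$ the set of $b\in B_Q(\CV,\CF)$ making $\CS$ a subrepresentation of $b\cdot v$ may a priori be small enough that the induced representations $(b\cdot v)|_\CS$ form a proper subvariety of $\CH_Q(\CS)$, obstructing a direct use of $\CT\subseteq_Q\CS$. The product decomposition $\mathbb Z\cong B_Q(\CV,\CF)\times\mathbb X(\CS)$ removes this difficulty by decoupling $\rho$, which simply forgets $b$, from $\pi$, which uses the full Borel action; irreducibility of $\mathbb Z$ together with dominance of $\pi$ then propagates the density of $U_\CS$ back to density in $\CH_Q(\CV)$.
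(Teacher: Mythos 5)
Your proof is correct, but it takes a more circuitous path than the paper, and the ``main obstacle'' you identify does not actually arise in the paper's argument. Since $Q$-intersection (\cref{def:q-intersecting}) is defined by nonemptiness of $\mathbold\Omega_v$ for \emph{every} $v\in\CH_Q(\CV)$ (not merely for generic $v$), the paper begins as you do --- taking a generic $v$ and, via \cref{lem:generic} applied to $\CS\subseteq_Q\CV$, a $b\in B_Q(\CV,\CF)$ with $\tilde v=b\cdot v$ preserving $\CS$ --- but then applies the \emph{definition} of $\CT\subseteq_Q\CS$, rather than its generic characterization, to the one specific representation $\tilde v|_\CS\in\CH_Q(\CS)$. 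This immediately yields $\CN\in\mathbold\Omega(\CT,\CF_\CS)$ with $\tilde v\CN\subseteq\CN$, so there is nothing to propagate: your irreducibility-and-dominance argument via $\mathbb Z\cong B_Q(\CV,\CF)\times\mathbb X(\CS)$ replaces a one-line appeal to the definition. The extension step you isolate, that $B_Q(\CV,\CF)\cap\mathrm{Stab}(\CS)$ surjects onto $B_Q(\CS,\CF_\CS)$, is genuinely needed in both approaches (it is how $\mathbold\Omega(\CT,\CF_\CS)\subseteq\mathbold\Omega(\CT,\CF)$ is obtained), and your sketch via a filtration-compatible complement is the right proof of the split exact sequence the paper records just before \cref{lem:dim filtered morphisms}. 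What your route buys is a proof that uses only the generic characterization from \cref{lem:generic}; what it costs is the auxiliary variety $\mathbb Z$ and a dominance argument that the paper sidesteps by invoking the ``for every'' clause of the definition directly.
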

\begin{proof}
Let $v\in\CH_Q(\CV)$ be generic.
Since $\CS\subseteq_Q \CV$, \cref{lem:generic} shows that there exists $b\in B_Q(\CV,\CF)$ such that $\tilde v=bvb^{-1}$ satisfies $\tilde v \CS \subseteq \CS$.

Since $\CT \subseteq_Q \CS$, there exists $\CN \in \boldOmega(\CT,\CF_\CS)$ such that~$\tilde v\CN \subseteq \CN$.
Every element $g\in B_Q(\CS,\CF_\CS)$ is the restriction of an element~$h\in B_Q(\CV,\CF)$ with $h \CS=\CS$.
It follows that $\boldOmega(\CT,\CF_\CS)$ is contained in $\boldOmega(\CT,\CF)$, hence~$\CN \in \boldOmega(\CT,\CF)$.
It follows that $v(b^{-1} \CN) \subseteq b^{-1} \CN$.
Since $b^{-1} \CN$ still belongs to $\boldOmega(\CT,\CF)$, we see that $\CT \subseteq_Q \CV$.
\end{proof}

\Cref{lem:necessary,lem:transitive} show that the two conditions~\ref{it:main A} and~\ref{it:main B} in \cref{thm:main} are necessary for $\CS$ to be $Q$-intersecting in~$\CV$.

The objective of the following sections is to prove the converse statement.
In fact, we will prove a refinement of \cref{thm:main}:
In \cref{thm:main refined}, we will show that in condition~\ref{it:main B} it suffices to consider only those~$\CT\neq\CS$ such that $\CT\in P_Q(\CS,\CF_\CS)$.
In turn, we obtain simple Horn conditions for testing $Q$-intersection (\cref{sec:horn}).
In the case of the Horn quivers, these conditions can be readily reduced to Belkale's conditions for intersecting Schubert classes~\cite{MR2177198}.
This emblematic example suggested to us the statement of the more general theorem.

\section{\texorpdfstring{$\Ext$}{Ext} groups and Schofield Criterium}
The proof of \cref{thm:main} will be based on computing the dimension of an $\Ext$ group.
We first state some easy lemmas about filtered vector spaces with proofs left to the reader.
Given two filtered vector spaces~$(V,F)$ and~$(W,G)$, a homomorphism~$\Phi\colon V\to W$ is a linear map that respect the two filtrations, i.e., $\Phi(F(i)) \subseteq G(i)$ for all~$i$ (we assume that both filtrations have the same length).
We denote the space of morphisms by~$\g_{F,G}(V,W)$.

\begin{lemma}
Let $(V,F)$ be a filtered vector space and $S\subseteq V$ a subspace.
Then, the exact sequence
  $0\to (S,F_S) \to (V,F) \to (V/S, F_{V/S}) \to 0$
is split.
\end{lemma}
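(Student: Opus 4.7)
The plan is to construct an explicit filtered complement $U$ of $S$ in $V$, that is, a subspace $U \subseteq V$ with $V = S \oplus U$ and with the stronger compatibility property that
\[
  F(i) = F_S(i) \oplus (F(i) \cap U) \qquad \text{for every } i.
\]
Given such a $U$, the restriction to $U$ of the quotient map $V \twoheadrightarrow V/S$ is a linear isomorphism $U \xrightarrow{\sim} V/S$ which carries $F(i) \cap U$ onto $F_{V/S}(i) = (F(i)+S)/S$. Its inverse $\sigma\colon V/S \to V$ is then a morphism of filtered vector spaces and splits the short exact sequence.

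To construct $U$, I would build a basis $e_1,\dots,e_n$ of $V$ by running through the filtration one step at a time, marking each basis vector as being of type $S$ or of type $U$. At step $i$, either $F(i) = F(i-1)$ (and no basis vector is added), or $\dim F(i) = \dim F(i-1) + 1$. In the latter situation I distinguish two subcases. If $\dim F_S(i) = \dim F_S(i-1) + 1$, then I choose any $e \in F_S(i) \setminus F_S(i-1)$ and mark it of type $S$; otherwise $F_S(i) = F_S(i-1)$, and I choose any $e \in F(i) \setminus F(i-1)$ and mark it of type $U$. In this second subcase the new vector automatically lies outside $S$, since any element of $F(i) \cap S$ that failed to lie in $F(i-1)$ would force $F_S(i) \supsetneq F_S(i-1)$.

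An easy induction on $i$ shows that after step~$i$ the chosen vectors of type $S$ form a basis of $F_S(i)$, and that adjoining the vectors of type $U$ produces a basis of $F(i)$; in particular the desired decomposition $F(i) = F_S(i) \oplus (F(i) \cap U)$ holds, where $U$ denotes the span of all vectors of type $U$. The only point that needs care is to verify that the dichotomy above is exhaustive and never gets stuck, which is immediate from the fact that the dimension jumps of $F_S$ on $S$ and of the induced filtration on $V/S$ together account for those of $F$ on $V$. No step presents a real obstacle, which is in line with this being a preliminary lemma whose proof the authors leave to the reader.
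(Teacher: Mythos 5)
Your proof is correct and complete. The paper explicitly leaves the proof of this lemma to the reader, so there is no proof in the paper to compare against, but your construction of a filtration-adapted complement $U$ via a basis built step-by-step along the filtration (using that the dimension jumps of $F_S$ and of the induced quotient filtration together account for those of $F$, which follows from the injectivity of $(F(i)\cap S)/(F(i-1)\cap S)\hookrightarrow F(i)/F(i-1)$) is the standard and natural argument, and all the steps you give check out.
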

%

\begin{lemma}\label{lem:dim filtered morphisms}
Let $(V,F)$ and $(W,G)$ be filtered vector spaces and $r=\dim V$.
Let $i_1 < \dots < i_r$ denote the smallest indices such that $\dim F(i_a) = a$ for~$a=1,\dots,r$.
Then, $\dim \g_{F,G}(V,W) = \sum_{a=1}^r \dim G(i_a)$.
\end{lemma}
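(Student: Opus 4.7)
The plan is to evaluate $\dim \g_{F,G}(V,W)$ directly by choosing a basis of $V$ adapted to the filtration $F$ and reading off the resulting constraints on a filtered morphism.

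First, I would pick vectors $e_1, \dots, e_r \in V$ such that $e_a \in F(i_a) \setminus F(i_a - 1)$. Because the filtration~$F$ is complete (dimensions increase by at most one per step) and $i_1 < \dots < i_r$ are precisely the jump indices, the vectors $(e_1,\dots,e_a)$ form a basis of $F(i_a)$ for every $a$, and more generally
\[ F(j) = \operatorname{span}\{e_a : i_a \leq j\} \]
for every index~$j$. In particular, $(e_1,\dots,e_r)$ is a basis of $V$, so any linear map $\Phi\colon V\to W$ is determined by the values $\Phi(e_1),\dots,\Phi(e_r)\in W$.

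Next, I would translate the filtration-preserving condition $\Phi(F(j))\subseteq G(j)$ into pointwise constraints on the $\Phi(e_a)$. Using the description of $F(j)$ above, the condition becomes $\Phi(e_a)\in G(j)$ for all $a$ and all $j\geq i_a$. Since $G$ is increasing, this is equivalent to the single condition $\Phi(e_a)\in G(i_a)$ for each~$a$. Conversely, if $\Phi(e_a)\in G(i_a)$ for every~$a$, then for any~$j$ we have $\Phi(e_a)\in G(i_a)\subseteq G(j)$ whenever $i_a\leq j$, hence $\Phi(F(j))\subseteq G(j)$. Thus $\g_{F,G}(V,W)$ is parameterized by independent choices $\Phi(e_a)\in G(i_a)$, yielding
\[ \dim \g_{F,G}(V,W) = \sum_{a=1}^{r} \dim G(i_a), \]
as claimed.

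There is no real obstacle here: the only point requiring care is verifying that the jump-index basis genuinely captures the filtration~$F$ level by level, so that the condition $\Phi(F(j))\subseteq G(j)$ for all intermediate~$j$ reduces to the finitely many conditions indexed by $a=1,\dots,r$. Once the adapted basis is in hand, the computation is immediate and does not depend on whether $(W,G)$ is of the same length as $(V,F)$ beyond being able to evaluate $G(i_a)$, which is implicit in the hypothesis.
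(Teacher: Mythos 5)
Your argument is correct. The paper leaves the proof of this lemma to the reader, so there is nothing to compare against; your approach via a jump-index basis adapted to $F$ is the natural one. The key verifications -- that $\dim F(i_a-1)=a-1$ by minimality of $i_a$ together with the one-step growth condition, that $(e_1,\dots,e_a)$ spans $F(i_a)$, and that the infinitely many containments $\Phi(F(j))\subseteq G(j)$ collapse to the $r$ independent conditions $\Phi(e_a)\in G(i_a)$ by monotonicity of $G$ -- are all present and correct, yielding the stated dimension count.
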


Let $B(V,F) \subseteq \GL(V)$ be the Borel subgroup associated to~$F$.
Its Lie algebra is $\b(V,F) = \g_{F,F}(V,V) \subseteq \gl(V)$.
It is clear that any $X\in \b(V,F)$ induces a map $\Phi \in \g_{F_S,F_{V/S}}(S, V/S)$.


\begin{lemma}\label{lem:surjective S to V/S}
The map $\b(V,F) \to \g_{F_S,F_{V/S}}(S, V/S)$ is surjective.
\end{lemma}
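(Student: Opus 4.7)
The plan is to invoke the splitting lemma stated just before (the one asserting that $0\to (S,F_S)\to (V,F)\to (V/S,F_{V/S})\to 0$ splits in the category of filtered vector spaces) and then extend $\Phi$ by zero.

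More precisely, the splitting gives a filtered section $\sigma\colon (V/S,F_{V/S})\to (V,F)$, whose image $U\coloneqq\sigma(V/S)$ is a complement of $S$ in $V$, so $V=S\oplus U$. Setting $F_U(i)\coloneqq F(i)\cap U$, the fact that $\sigma$ is a morphism of filtered vector spaces together with the dimension count $\dim F_U(i)=\dim F_{V/S}(i)$ implies $F(i)=F_S(i)\oplus F_U(i)$ for every~$i$, and $\sigma$ restricts to a filtered isomorphism $(V/S,F_{V/S})\to (U,F_U)$. In particular, the natural projection $V\twoheadrightarrow V/S$ identifies $(U,F_U)\cong (V/S,F_{V/S})$.

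Now, given any $\Phi\in\g_{F_S,F_{V/S}}(S,V/S)$, lift it across this identification to a filtered map $\Phi'\colon(S,F_S)\to (U,F_U)$, and define $X\in\gl(V)$ by declaring $X|_S=\iota_U\circ\Phi'$ and $X|_U=0$, where $\iota_U\colon U\hookrightarrow V$ is the inclusion. Using the direct sum decomposition of the filtration, for every~$i$:
\begin{align*}
  X(F(i)) = X\bigl(F_S(i)\oplus F_U(i)\bigr) = \Phi'(F_S(i))\subseteq F_U(i)\subseteq F(i),
\end{align*}
so $X\in\b(V,F)$. By construction, the composition $S\hookrightarrow V\xrightarrow{X} V\twoheadrightarrow V/S$ recovers $\Phi$, so $X$ is a preimage of $\Phi$ under the map in the statement.

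There is no real obstacle here: the only substantive ingredient is the existence of a filtered splitting, which is the content of the preceding lemma. Everything else is a direct verification that extending by zero along a filtered complement yields a filtered endomorphism of~$V$ with the prescribed induced map.
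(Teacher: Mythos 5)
Your proof is correct, and since the paper explicitly leaves the proofs of these "easy lemmas about filtered vector spaces" to the reader, there is no proof in the paper to compare against; the approach you take---using the filtered splitting from the immediately preceding lemma to produce a filtered complement $U$ with $F(i)=F_S(i)\oplus F_U(i)$, then extending $\Phi$ by zero on $U$---is precisely what the placement of that splitting lemma is signaling and is the natural argument. The only step worth writing out a bit more carefully is that $\dim F_U(i)=\dim F_{V/S}(i)$ (the section gives $\geq$, injectivity of the quotient map on $U$ gives $\leq$), which together with $\dim F_{V/S}(i)=\dim F(i)-\dim F_S(i)$ yields the claimed direct-sum decomposition of each $F(i)$.
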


Finally, we record the following lemma:

\begin{lemma}\label{lem:add dim}
Let $(V,F)$ and $(W,G)$ be filtered vector spaces and let $S\subseteq V$ and $T\subseteq W$ be subspaces.
Then:
\begin{align*}
  \dim \g_{F,G}(V,W) &= \dim \g_{F_S,G}(S,W) + \dim \g_{F_{V/S},G}(V/S,W), \\
  \dim \g_{F,G}(V,W) &= \dim \g_{F,G_T}(V,T) + \dim \g_{F,G_{W/T}}(V,W/T).
\end{align*}
\end{lemma}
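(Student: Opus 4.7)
My plan is to prove each of the two identities by producing a short exact sequence of filtered morphism spaces and then passing to dimensions. The key ingredient is the previous lemma, which says that every short exact sequence $0 \to (S, F_S) \to (V, F) \to (V/S, F_{V/S}) \to 0$ of filtered vector spaces is split, and likewise for $(W, G)$.

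\textbf{First identity.} I will construct the short exact sequence
\begin{equation*}
0 \to \g_{F_{V/S},G}(V/S, W) \xrightarrow{\pi^*} \g_{F,G}(V, W) \xrightarrow{\iota^*} \g_{F_S,G}(S, W) \to 0,
\end{equation*}
where $\pi^*$ is precomposition with the quotient $\pi \colon V \to V/S$ and $\iota^*$ is restriction along the inclusion $\iota \colon S \hookrightarrow V$. That $\pi^*$ lands in $\g_{F,G}$ and is injective is immediate; exactness in the middle follows because any $\Phi \in \g_{F,G}(V,W)$ vanishing on $S$ factors as $\bar\Phi \circ \pi$ with $\bar\Phi\bigl((F(i)+S)/S\bigr) = \Phi(F(i)) \subseteq G(i)$. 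The one nontrivial point is the surjectivity of $\iota^*$: given $\psi \in \g_{F_S,G}(S,W)$, apply the splitting to obtain a filtered retraction $r \colon V \to S$ with $r(F(i)) \subseteq F_S(i)$, and set $\Phi \coloneqq \psi \circ r$; then $\Phi(F(i)) \subseteq \psi(F_S(i)) \subseteq G(i)$ and $\Phi|_S = \psi$. Taking dimensions in the resulting exact sequence yields the first identity.

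\textbf{Second identity.} I will dually construct
\begin{equation*}
0 \to \g_{F,G_T}(V, T) \xrightarrow{j_*} \g_{F,G}(V, W) \xrightarrow{q_*} \g_{F,G_{W/T}}(V, W/T) \to 0,
\end{equation*}
using the inclusion $j \colon T \hookrightarrow W$ and quotient $q \colon W \to W/T$. Injectivity of $j_*$ and exactness in the middle are straightforward: a $\Phi \in \g_{F,G}(V,W)$ with $q \circ \Phi = 0$ takes values in $T$ and satisfies $\Phi(F(i)) \subseteq G(i) \cap T = G_T(i)$. Surjectivity of $q_*$ uses the splitting of $0 \to (T, G_T) \to (W, G) \to (W/T, G_{W/T}) \to 0$ to produce a filtered section $s \colon W/T \to W$ with $s(G_{W/T}(i)) \subseteq G(i)$; given $\bar\Phi \in \g_{F,G_{W/T}}(V, W/T)$, the composite $\Phi \coloneqq s \circ \bar\Phi$ satisfies $\Phi(F(i)) \subseteq s(G_{W/T}(i)) \subseteq G(i)$ and $q_*\Phi = \bar\Phi$. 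Again the dimension formula follows.

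\textbf{Main obstacle.} There is no real difficulty here beyond checking that the chosen splittings produce maps that genuinely respect the induced filtrations; this is precisely what the splitting lemma for filtered short exact sequences guarantees. The argument is essentially the observation that filtered splittings turn the restriction/quotient maps of morphism spaces into split surjections, so everything reduces to additivity of dimension in a short exact sequence.
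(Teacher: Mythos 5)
Your proof is correct, and since the paper explicitly leaves this lemma to the reader there is no written proof to compare against. Your approach — building the two short exact sequences of morphism spaces (precomposition/restriction for the source decomposition, postcomposition/projection for the target decomposition) and using the preceding splitting lemma to get surjectivity at the right end — is the natural one given the lemmas the paper has set up just before; all the filtration checks you make are the right ones, and deriving the filtered retraction $r = \mathrm{id}_V - \sigma\circ\pi$ from a filtered section $\sigma$ does indeed preserve filtrations as you claim.

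One remark worth making: the second identity does not actually require the splitting lemma at all. By \cref{lem:dim filtered morphisms}, $\dim \g_{F,G}(V,W) = \sum_{a} \dim G(i_a)$, $\dim\g_{F,G_T}(V,T)=\sum_a\dim(G(i_a)\cap T)$, and $\dim\g_{F,G_{W/T}}(V,W/T)=\sum_a\dim\bigl((G(i_a)+T)/T\bigr)$, over the same index set $\{i_a\}$ determined by $F$; the rank–nullity identity $\dim(G(i)\cap T) + \dim\bigl((G(i)+T)/T\bigr)=\dim G(i)$ then gives the claim termwise. The first identity can also be obtained from \cref{lem:dim filtered morphisms} by observing that $\dim F(i) = \dim F_S(i) + \dim F_{V/S}(i)$ and that, since $F$ increases by at most one per step, the jump indices of $F$ partition disjointly into the jump indices of $F_S$ and of $F_{V/S}$. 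That route is more computational; yours is cleaner and makes the use of the splitting lemma explicit, which is presumably what the authors intended the reader to do. Either is acceptable.
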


We now consider families of filtered vector spaces, i.e., filtered dimension vectors.
Given two filtered dimension vectors $(\CV,\CF)$ and $(\CW,\CG)$, a homomorphism $\Phi=(\Phi_x)_{x\in Q_0}$ consists of a family of maps~$\Phi_x \in \g_{F_x,G_x}(V_x,W_x)$.
We denote the space of homomorphisms by~$\g_{Q,\CF,\CG}(\CV,\CW)$.
As above, $\b_Q(\CF,\CV) = \g_{Q,\CF,\CF}(\CV,\CV) \subseteq \gl_Q(\CV)$ is the Lie algebra of a Borel subgroup of~$\GL_Q(\CV)$.
The following definition is the filtered analog of \cref{eq:euler form}.

\begin{definition}[Filtered Euler number]\label{def:filtered eul}
Let $(\CV,\CF)$ and $(\CW,\CG)$ be two filtered dimension vectors.
We define the \emph{filtered Euler number} by
\begin{align*}
  \eul_{Q,\CF,\CG}(\CV,\CW) \coloneqq \dim \g_{Q,\CF,\CG}(\CV,\CW) - \dim\CH_Q(\CV,\CW).
\end{align*}
\end{definition}

For families of subspaces~$\CS\subseteq\CV$ and $\CT\subseteq\CW$, \cref{lem:add dim} implies that
\begin{align}
\label{eq:add eul}
  \eul_{Q,\CF,\CG}(\CV,\CW) &= \eul_{Q,\CF_\CS,\CG}(\CS,\CW) + \eul_{Q,\CF_{\CV/\CS},\CG}(\CV/\CS,\CW), \\
\label{eq:add eul 2nd}
  \eul_{Q,\CF,\CG}(\CV,\CW) &= \eul_{Q,\CF,\CG_\CT}(\CV,\CT) + \eul_{Q,\CF,\CG_{\CW/\CT}}(\CV,\CW/\CT).
\end{align}

Filtered Euler numbers can be computed in the following way.
For $v=(v_a)_{a\in Q_1}\in\CH_Q(\CV)$ and $w=(w_a)_{a\in Q_1}\in\CH_Q(\CW)$, consider the map
\begin{align}\label{eq:delta}
  \delta_{v,w}\colon \g_{Q,\CF,\CG}(\CV,\CW) \to \CH_Q(\CV,\CW), \; \Phi \mapsto \Phi v - w \Phi,
\end{align}
where the right-hand side denotes the element of $\CH_Q(\CV,\CW)$ with components~$\Phi_y v_a - w_a \Phi_x$ for each arrow $a:x\to y$ in $Q_1$, generalizing our notation for the action of $\gl_Q(\CV)$ on $\CH_Q(\CV)$.
Define
\begin{align*}
  \Hom_{Q,\CF,\CG}(v,w) &\coloneqq \ker(\delta_{v,w}),\\
  \Ext_{Q,\CF,\CG}(v,w) &\coloneqq \coker(\delta_{v,w}),
\end{align*}
so that we have a short exact sequence
\begin{align*}
  0 \!\!\to\!\! \Hom_{Q,\CF,\CG}(v,w) \!\!\to\!\! \g_{Q,\CF,\CG}(\CV,\CW) \!\!\to\!\! \CH_Q(\CV,\CW) \!\!\to\!\! \Ext_{Q,\CF,\CG}(v,w) \!\!\to\!\! 0,
\end{align*}
By exactness, the Euler number of this complex is zero, hence
\begin{align*}
  \eul_{Q,\CF,\CG}(\CV,\CW) = \dim \Hom_{Q,\CF,\CG}(v,w)-\dim \Ext_{Q,\CF,\CG}(v,w)
\end{align*}
for any~$v\in\CH_Q(\CV)$ and $w\in\CH_Q(\CW)$.
Now define
\begin{align*}
  \hom_{Q,\CF,\CG}(\CV,\CW)&\coloneqq\min_{v,w} \dim \Hom_{Q,\CF,\CG}(v,w),\\
  \ext_{Q,\CF,\CG}(\CV,\CW)&\coloneqq\min_{v,w} \dim \Ext_{Q,\CF,\CG}(v,w),
\end{align*}
where the minimizations are over all $v\in\CH_Q(\CV)$ and $w\in\CH_Q(\CW)$.
There exists a Zariski-open subset where both minima are simultaneously attained, hence
\begin{align}\label{eq:eul = hom - ext}
  \eul_{Q,\CF,\CG}(\CV,\CW) = \hom_{Q,\CF,\CG}(\CV,\CW)-\ext_{Q,\CF,\CG}(\CV,\CW).
\end{align}

If $\CS\subseteq\CV$ is a family of subspaces then the tangent space at~$\CS$ of the Schubert cell~$\boldOmega^0(\CS,\CF)$ can be identified with~$\g_{Q,\CF_\CS,\CF_{\CV/\CS}}(\CS,\CV/\CS)$.
Thus:
\begin{align*}
  \dim \boldOmega(\CS,\CF) = \dim \g_{Q,\CF_\CS,\CF_{\CV/\CS}}(\CS,\CV/\CS),
\end{align*}
hence, using \cref{def:edim,def:filtered eul},
\begin{equation}\label{eq:eul is edim}
\begin{aligned}
  \eul_{Q,\CF_\CS,\CF_{\CV/\CS}}(\CS,\CV/\CS)
&= \dim \boldOmega(\CS,\CF) - \dim \CH_Q(\CS,\CV/\CS) \\
&= \edim_{Q,\CF}(\CS,\CV).
\end{aligned}
\end{equation}
Our next theorem is the analog of Schofield's theorem~\cite{MR1162487} in the context of filtered dimension vectors:

\begin{theorem}\label{thm:Q-intersecting iff ext=0}
Let $(\CV,\CF)$ be a filtered dimension vector and $\CS\subseteq\CV$ a family of subspaces.
Then $\CS\subseteq_Q \CV$ if and only if $\ext_{Q,\CF_{\CS},\CF_{\CV/\CS}}(\CS,\CV/\CS)=0$.
\end{theorem}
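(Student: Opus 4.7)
The plan is to analyze the proper projection $q\colon \mathbb X \to \CH_Q(\CV)$ already used in the proof of \cref{prp:dim generic Omegav}, where $\mathbb X = \{(\CT, v) \in \mathbold\Omega(\CS, \CF) \times \CH_Q(\CV) : v\CT \subseteq \CT\}$. Since $\mathbold\Omega(\CS, \CF)$ is projective, $q$ is proper; thus $\CS \subseteq_Q \CV$, which by definition means surjectivity of $q$, is equivalent to dominance of $q$. Restricting to the smooth open subset $\mathbb X^0$ lying over the Schubert cell $\mathbold\Omega^0(\CS, \CF)$ (cf.\ \eqref{eq:X0}), this reduces to asking that $dq$ be surjective at some point of $\mathbb X^0$.

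To compute this differential I would exploit the $B_Q(\CV, \CF)$-equivariance of $q$ together with transitivity of the Borel on $\mathbold\Omega^0(\CS, \CF)$ to reduce to the study of $dq$ at a point of the form $(\CS, v)$ with $v\CS \subseteq \CS$. Choosing a complement $\CV = \CS \oplus \CU$ and identifying $\CU$ with $\CV/\CS$, one writes $v$ in block form with diagonal blocks $v_\CS$ and $v_{\CV/\CS}$ and vanishing lower-left block. Nearby points of $\mathbold\Omega^0(\CS, \CF)$ are parameterized as graphs $\CS' = \mathrm{graph}(\Phi)$ with $\Phi$ ranging over the tangent space $\g_{Q, \CF_\CS, \CF_{\CV/\CS}}(\CS, \CV/\CS)$ identified in~\eqref{eq:eul is edim} (using \cref{lem:surjective S to V/S}).

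A first-order expansion of the incidence relation $v' \CS' \subseteq \CS'$ around $(\CS, v)$ shows that a tangent vector $(\dot\Phi, \dot v) \in T_{(\CS, v)} \mathbb X^0$ is constrained precisely by
\begin{align*}
\dot v_{10} \;=\; \dot\Phi\, v_\CS - v_{\CV/\CS}\, \dot\Phi \;=\; \delta_{v_\CS, v_{\CV/\CS}}(\dot\Phi),
\end{align*}
while the remaining blocks $\dot v_{00}, \dot v_{01}, \dot v_{11}$ are unconstrained. Since $dq$ sends $(\dot\Phi, \dot v) \mapsto \dot v$, its cokernel at $(\CS, v)$ is therefore canonically $\coker(\delta_{v_\CS, v_{\CV/\CS}}) = \Ext_{Q, \CF_\CS, \CF_{\CV/\CS}}(v_\CS, v_{\CV/\CS})$. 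Since any pair $(v_\CS, v_{\CV/\CS}) \in \CH_Q(\CS) \times \CH_Q(\CV/\CS)$ lifts to some $v$ with $v\CS \subseteq \CS$ (take the upper-triangular extension with zero lower-left block), the existence of a point of $\mathbb X^0$ at which $dq$ is surjective is equivalent to the existence of such a pair with vanishing $\Ext$, i.e.\ to $\ext_{Q, \CF_\CS, \CF_{\CV/\CS}}(\CS, \CV/\CS) = 0$. Combining this with the reductions of the first paragraph yields the claimed equivalence.

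The main obstacle I anticipate is the differential computation itself: one must carry out the first-order expansion carefully, verify that the linearized constraint really coincides with the map $\delta$ of~\eqref{eq:delta}, and confirm that the filtration conditions on $\dot\Phi$ cut out exactly $\g_{Q, \CF_\CS, \CF_{\CV/\CS}}(\CS, \CV/\CS)$. Once this identification is pinned down, the remainder is a routine combination of properness, generic smoothness, and $B$-equivariance.
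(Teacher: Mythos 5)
Your proposal is correct and, modulo packaging, is the same proof as in the paper: the paper works with the map $m\colon B_Q(\CV,\CF)\times\mathbb X(\CS)\to\CH_Q(\CV)$, $(b,v)\mapsto b\cdot v$, which is just a local trivialization of your $q|_{\mathbb X^0}$, and computes its differential $(X,w)\mapsto(Xv-vX)+w$ directly from the Lie algebra action rather than via a graph parameterization of the Schubert cell. Both arguments reduce $\CS\subseteq_Q\CV$ to dominance, use $B$-equivariance to sit over $\CS$, invoke \cref{lem:surjective S to V/S} to identify the relevant $10$-block of the tangent space with $\g_{Q,\CF_\CS,\CF_{\CV/\CS}}(\CS,\CV/\CS)$, and observe that the cokernel of the resulting map is precisely $\coker(\delta_{v_{00},v_{11}})=\Ext_{Q,\CF_\CS,\CF_{\CV/\CS}}(v_{00},v_{11})$, so the computation you flag as the main obstacle goes through exactly as you expect.
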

\begin{proof}
Abbreviate $\boldOmega^0=\boldOmega^0(\CS,\CF)$.
Consider again the smooth variety from \cref{eq:X0},
\begin{align*}
  \mathbb X^0 = \{ (\CT,v) \in \boldOmega^0 \times \CH_Q(\CV) : v \CT \subseteq \CT \},
\end{align*}
which is a $B_Q(\CV,\CF)$-equivariant vector bundle over the homogeneous space~$\boldOmega^0$.
Recall from \cref{eq:fiber X(S)} that the fiber $\mathbb X(\CS)$ is the vector space consisting of all elements
\begin{align}\label{eq:elem of fiber}
   v = \begin{pmatrix}v_{00} & v_{01} \\ 0 & v_{11}\end{pmatrix}
\end{align}
with $v_{00} \in \CH_Q(\CS)$, $v_{01} \in \CH_Q(\CU, \CS)$, and $v_{11} \in \CH_Q(\CU)$, where $\CU$ is a complement of~$\CS$ in~$\CV$.
Now consider the map
\begin{align*}
  m \colon B_Q(\CV,\CF) \times \mathbb X(\CS) \to \CH_Q(\CV), \quad (b,v) \mapsto b v b^{-1}.
\end{align*}
Then, $\CS\subseteq_Q\CV$ if and only if the map~$m$ is dominant.
Since $m$ is a map between smooth irreducible varieties, it is dominant if and only if there exists a point $(b,v)$ where the differential is surjective.
By equivariance, we can assume that $b=1$.
Thus, $\CS\subseteq_Q\CV$ if and only if the differential of~$m$ at $(1,v)$ is surjective for some~$v$.

This differential can be written as
\begin{align*}
  \b_Q(\CV,\CF) \oplus \mathbb X(\CS) \to \CH_Q(\CV), \quad (X,w) \mapsto Xv - vX + w,
\end{align*}
where $X \in \b_Q(\CV,\CF)$ and $w \in \mathbb X(\CS)$.
In view of \cref{eq:elem of fiber}, this map is surjective if and only if its `component' $\b_Q(\CV,\CF) \to \CH_Q(\CS, \CU) \cong \CH_Q(\CS, \CV/\CS)$ is surjective.
Since $\b_Q(\CV,\CF)$ surjects onto $\g_{Q,\CF_\CS,\CF_{\CV/\CS}}(\CS,\CV/\CS)$ by \cref{lem:surjective S to V/S}, it even suffices to determine when
\begin{align*}
  \g_{Q,\CF_\CS,\CF_{\CV/\CS}}(\CS,\CV/\CS) \to \CH_Q(\CS, \CV/\CS), \quad \Phi \mapsto \Phi v_{00} - v_{11} \Phi
\end{align*}
is surjective.
But this is exactly the map~$\delta_{v_{00},v_{11}}$ from \cref{eq:delta}.
Thus, we conclude that $\CS\subseteq_Q\CV$ if and only if
$\ext_{Q,\CF_\CS,\CF_{\CV/\CS}}(\CS,\CV/\CS)=0$.
\end{proof}

\section{Calculation of ext}\label{sec:ext}
Let $(\CV,\CF)$ and $(\CW,\CG)$ be filtered dimension vectors.
In this section, we compute the quantity~$\ext_{Q,\CF,\CG}(\CV,\CW)$ in terms of a minimization over filtered Euler numbers (\cref{def:filtered eul}).
Using \cref{thm:Q-intersecting iff ext=0}, this reduces the problem of determining $Q$-intersection to an easy numerical criterion.

\begin{theorem}\label{thm:ext via eul}
Let $(\CV,\CF)$ and $(\CW,\CG)$ be filtered dimension vectors.
Then,
\begin{align*}
  \ext_{Q,\CF,\CG}(\CV,\CW) = -\min_{\CS\subseteq_Q \CV} \eul_{Q,\CF_{\CS},\CG}(\CS,\CW),
\end{align*}
where we minimize over all $\CS\subseteq_Q\CV$ including $\CS=(\{0\})$ and $\CS=\CV$.
\end{theorem}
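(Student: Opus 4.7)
I establish the two inequalities separately. The lower bound follows from a snake-lemma computation; the upper bound requires an inductive construction modeled on Schofield's approach.

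\emph{Lower bound: $\ext_{Q,\CF,\CG}(\CV,\CW) \geq -\eul_{Q,\CF_\CS,\CG}(\CS,\CW)$ for every $\CS \subseteq_Q \CV$.} Fix such $\CS$. By \cref{lem:generic}, for generic $v\in\CH_Q(\CV)$ there exists $b\in B_Q(\CV,\CF)$ with $\tilde v := b\cdot v$ satisfying $\tilde v\CS \subseteq \CS$. Splitting the filtered sequence $0\to\CS\to\CV\to\CV/\CS\to0$ as vector spaces yields the compatible short exact rows
\begin{align*}
  0 \to \g_{Q,\CF_{\CV/\CS},\CG}(\CV/\CS,\CW) \to \g_{Q,\CF,\CG}(\CV,\CW) \to \g_{Q,\CF_\CS,\CG}(\CS,\CW) \to 0
\end{align*}
and $0 \to \CH_Q(\CV/\CS,\CW) \to \CH_Q(\CV,\CW) \to \CH_Q(\CS,\CW) \to 0$, connected by $\delta_{\tilde v_{\CV/\CS},w}$, $\delta_{\tilde v,w}$, $\delta_{\tilde v_\CS,w}$, which commute because $\tilde v\CS\subseteq\CS$. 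The snake lemma yields a long exact sequence terminating in a surjection $\Ext_{Q,\CF,\CG}(\tilde v,w) \twoheadrightarrow \Ext_{Q,\CF_\CS,\CG}(\tilde v_\CS,w)$; the Euler identity together with $\dim\Hom_{Q,\CF_\CS,\CG}(\tilde v_\CS,w) \geq 0$ then gives $\dim\Ext_{Q,\CF,\CG}(\tilde v,w) \geq -\eul_{Q,\CF_\CS,\CG}(\CS,\CW)$. Since $b$ preserves $\CF$, the substitution $\Phi \mapsto \Phi \circ b$ is a bijection of $\g_{Q,\CF,\CG}(\CV,\CW)$ that intertwines $\delta_{v,w}$ with $\delta_{\tilde v,w}$ (up to a bijection of $\CH_Q(\CV,\CW)$), so $\dim\Ext(v,w)=\dim\Ext(\tilde v,w)$. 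Taking the minimum over $(v,w)$ in a joint Zariski-open locus yields the claim.

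\emph{Upper bound.} Using the generic Euler identity together with the additivity \eqref{eq:add eul}, exhibiting $\CS^* \subseteq_Q \CV$ with $-\eul(\CS^*,\CW) \geq \ext$ is equivalent to finding $\CS^* \subseteq_Q \CV$ with
\begin{align*}
  \eul_{Q,\CF_{\CV/\CS^*},\CG}(\CV/\CS^*,\CW) \geq \hom_{Q,\CF,\CG}(\CV,\CW).
\end{align*}
I would induct on $d(\CV)+d(\CW)$. If $\hom(\CV,\CW)=0$ (covering the base cases $\CV=0$ or $\CW=0$), take $\CS^*=\CV$: then $\CV\subseteq_Q\CV$ trivially and $\eul(0,\CW)=0\geq 0$. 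Otherwise, for generic $(v,w)$ I take $\CS^*$ to be the common kernel $\bigcap_{\Psi\in\Hom_{Q,\CF,\CG}(v,w)}\ker\Psi$: this is a proper subrepresentation of $v$ whose Schubert position is well-defined by genericity, so $\CS^* \subseteq_Q \CV$ by \cref{lem:generic}. By construction the inclusion $\Hom_{Q,\CF_{\CV/\CS^*},\CG}(v_{\CV/\CS^*},w) \hookrightarrow \Hom_{Q,\CF,\CG}(v,w)$ is an isomorphism. Applying the induction hypothesis to the strictly smaller pair $(\CV/\CS^*,\CW)$, together with \cref{thm:Q-intersecting iff ext=0}, forces $\Ext_{Q,\CF_{\CV/\CS^*},\CG}(v_{\CV/\CS^*},w)=0$ for generic choices, and therefore $\eul(\CV/\CS^*,\CW)=\dim\Hom(v_{\CV/\CS^*},w)=\hom(\CV,\CW)$, as required.

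The main obstacle is the upper bound, specifically establishing $\Ext_{Q,\CF_{\CV/\CS^*},\CG}(v_{\CV/\CS^*},w)=0$ at the common kernel for generic $v, w$. Reducing this to a $Q$-intersection statement via \cref{thm:Q-intersecting iff ext=0} requires verifying that the induction parameter strictly decreases and that the induced filtrations on the quotient $\CV/\CS^*$ behave compatibly, mirroring Schofield's original proof in \cite{MR1162487} adapted to the filtered context.
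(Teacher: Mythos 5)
Your lower bound is correct and matches the paper in substance: the snake lemma applied to the split filtered sequence $0\to\CS\to\CV\to\CV/\CS\to0$ gives exactly the surjection $\Ext_{Q,\CF,\CG}(v,w)\twoheadrightarrow\Ext_{Q,\CF_\CS,\CG}(v|_\CS,w)$ (\cref{lem:ext ieq v w}), from which \cref{lem:ext ieq} and the inequality follow, just as in the paper.

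The upper bound is where things break down, and the construction you choose diverges from the paper's in a way that does not work. You set $\CS^*$ equal to the \emph{common} kernel $\bigcap_{\Psi}\ker\Psi$, whereas the paper sets $\CS=\ker\Phi$ for a \emph{single} generic $\Phi\in\Hom_{Q,\CF,\CG}(v,w)$. The common kernel gives you the clean isomorphism $\Hom(v_{\CV/\CS^*},w)\cong\Hom(v,w)$, but at the following costs.
\begin{enumerate}[label=(\roman*)]
\item The induction need not terminate. When $\hom_{Q,\CF,\CG}(\CV,\CW)>0$ it only follows that $\CS^*\ne\CV$, not that $\CS^*\ne(\{0\})$. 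In the simplest examples (e.g.\ the $A_2$ quiver with $\CV=(\C^2,\C^2)$ and $\CW=(\C,\C)$, where $\Hom(v,w)$ has dimension $2$ and the constituent kernels sweep out all lines) the common kernel $\CS^*$ is zero, so $\CV/\CS^*=\CV$ and there is no decrease in $d(\CV)+d(\CW)$. The paper's generic single kernel always satisfies $d(\CS)<d(\CV)$ once $\hom>0$, and the induction is run on the \emph{sub}object $\CS$, not on the quotient.
\item The central claim $\Ext_{Q,\CF_{\CV/\CS^*},\CG}(v_{\CV/\CS^*},w)=0$ is not supported by the tools you cite. The induction hypothesis applied to $(\CV/\CS^*,\CW)$ gives $\ext(\CV/\CS^*,\CW)=-\min_{\CS'\subseteq_Q\CV/\CS^*}\eul(\CS',\CW)$, which need not be zero. \cref{thm:Q-intersecting iff ext=0} concerns $\ext_{Q,\CF_\CS,\CF_{\CV/\CS}}(\CS,\CV/\CS)$, i.e.\ the vanishing of $\ext$ between a subfamily and \emph{its own} quotient; it says nothing about $\ext(\CV/\CS^*,\CW)$ for an arbitrary second family $\CW$. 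There is also a structural obstruction: your own snake-lemma sequence shows that $\Hom_{Q,\CF_{\CS^*},\CG}(v|_{\CS^*},w)$ injects into $\Ext_{Q,\CF_{\CV/\CS^*},\CG}(v_{\CV/\CS^*},w)$, and there is no reason for the former to vanish (a morphism from $\CS^*$ into $\CW$ need not extend to $\CV$, so the fact that extensions restrict to zero on $\CS^*$ is no obstruction).
\end{enumerate}
The paper avoids both issues by establishing the reduction $\ext_{Q,\CF,\CG}(\CV,\CW)=\ext_{Q,\CF_\CS,\CG}(\CS,\CW)$ for the generic single kernel $\CS$; this rests on the two dimension computations (\cref{claim:dim hom via eul} and \cref{claim:hom ieq}), carried out via an auxiliary bundle over the Schubert cell of $\CS$. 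Your proposal has no replacement for these estimates and instead posits a stronger vanishing statement that it cannot justify. To repair the argument, you would need either to adopt the generic-single-kernel reduction or to find some other replacement for Claims~\ref{claim:dim hom via eul} and~\ref{claim:hom ieq}.
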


The minimization is well-defined, since $\eul_{Q,\CF_{\CS},\CG}(\CS,\CW)$ only depends on the $B_Q(\CV,\CF)$-orbit of~$\CS$ (i.e., the Schubert cell determined by~$\CS$) and there are only finitely many such orbits.
The remainder of this section will be concerned with the proof of \cref{thm:ext via eul}.

Let $v\in\CH_Q(\CV)$, $w\in\CH_Q(\CW)$, and $\CS\subseteq\CV$ a subrepresentation of~$v$.
Consider the surjective map
\begin{align}\label{eq:res and proj}
  \CH_Q(\CV,\CW) \to \CH_Q(\CS,\CW) \to \Ext_{Q,\CF_\CS,\CG}(v|_\CS,w)
\end{align}
where the first arrow is componentwise restriction and the second the canonical quotient map.
The proof of the following lemma is left to the reader.

\begin{lemma}\label{lem:ext ieq v w}
The map~\eqref{eq:res and proj} descends to a surjection
\begin{align*}
  \Ext_{Q,\CF,\CG}(v,w) \to \Ext_{Q,\CF_\CS,\CG}(v|_\CS,w).
\end{align*}
In particular, for any two representations $v\in\CH_Q(\CV)$ and $w\in\CH_Q(\CW)$ we have that~$\dim \Ext_{Q,\CF,\CG}(v,w) \geq \dim \Ext_{Q,\CF_\CS,\CG}(v|_\CS,w)$.
\end{lemma}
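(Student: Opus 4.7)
The plan is to analyze the composition in~\eqref{eq:res and proj}, namely
$\CH_Q(\CV,\CW) \to \CH_Q(\CS,\CW) \to \Ext_{Q,\CF_\CS,\CG}(v|_\CS,w)$,
and show two things: (i) it vanishes on the image of $\delta_{v,w}$, which produces the desired induced map on cokernels, and (ii) it is surjective, from which surjectivity of the induced map is immediate. The dimensional inequality then follows at once.

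For step~(i), I would take an arbitrary $\Phi = (\Phi_x)_{x \in Q_0} \in \g_{Q,\CF,\CG}(\CV,\CW)$ and consider its componentwise restriction $\Phi|_\CS = (\Phi_x|_{S_x})_{x \in Q_0}$. Since $\Phi_x$ sends $F_x(i)$ into $G_x(i)$, it sends $F_x(i) \cap S_x$ into $G_x(i)$, so $\Phi|_\CS \in \g_{Q,\CF_\CS,\CG}(\CS,\CW)$. Because $\CS$ is a subrepresentation of~$v$, for every arrow $a \colon x \to y$ we have $v_a(S_x) \subseteq S_y$, and hence the restriction to $S_x$ of the $a$-component $\Phi_y v_a - w_a \Phi_x$ of $\delta_{v,w}(\Phi)$ factors as $(\Phi_y|_{S_y})\, v_a|_{S_x} - w_a\, (\Phi_x|_{S_x})$, which is precisely the $a$-component of $\delta_{v|_\CS,w}(\Phi|_\CS)$. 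Consequently, the restriction of $\delta_{v,w}(\Phi)$ lies in the image of $\delta_{v|_\CS,w}$ and is killed by the quotient to $\Ext_{Q,\CF_\CS,\CG}(v|_\CS,w)$. This is exactly what is needed to obtain the well-defined induced map $\Ext_{Q,\CF,\CG}(v,w) \to \Ext_{Q,\CF_\CS,\CG}(v|_\CS,w)$.

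For step~(ii), observe that the componentwise restriction $\CH_Q(\CV,\CW) \to \CH_Q(\CS,\CW)$ is surjective: any family $(\phi_a \colon S_x \to W_y)_{a\in Q_1}$ can be extended by fixing, for each $x \in Q_0$, a vector-space complement of $S_x$ in $V_x$ and extending by zero. The second arrow is the canonical projection onto a cokernel and so is surjective by construction. Hence the composition is surjective, and therefore so is the induced map on the quotient $\Ext_{Q,\CF,\CG}(v,w)$. The inequality $\dim \Ext_{Q,\CF,\CG}(v,w) \geq \dim \Ext_{Q,\CF_\CS,\CG}(v|_\CS,w)$ follows immediately from the existence of this surjection.

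There is no conceptually hard step here; the only point requiring attention is the verification in step~(i) that $\Phi|_\CS$ still lies in the filtered morphism space $\g_{Q,\CF_\CS,\CG}(\CS,\CW)$, but this is a one-line check from the definitions of the induced filtration $\CF_\CS$ and of filtration-preserving morphisms. I therefore expect the proof to reduce to the bookkeeping sketched above, and I do not anticipate any real obstacle in executing it.
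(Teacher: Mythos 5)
Your proof is correct, and since the paper explicitly leaves the proof of this lemma to the reader, your argument is exactly the intended one: you verify that for any $\Phi\in\g_{Q,\CF,\CG}(\CV,\CW)$ the restriction of $\delta_{v,w}(\Phi)$ to $\CS$ equals $\delta_{v|_\CS,w}(\Phi|_\CS)$ (which uses both that $\Phi|_\CS$ preserves the induced filtrations and that $v_a(S_x)\subseteq S_y$), so the composition~\eqref{eq:res and proj} kills the image of $\delta_{v,w}$ and descends to the cokernel; and then the surjectivity of componentwise restriction $\CH_Q(\CV,\CW)\to\CH_Q(\CS,\CW)$ gives surjectivity of the induced map, from which the dimension inequality is immediate.
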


\begin{lemma}\label{lem:ext ieq}
Let $\CS\subseteq_Q\CV$.
Then,
$\ext_{Q,\CF,\CG}(\CV,\CW) \geq \ext_{Q,\CF_\CS,\CG}(\CS,\CW)$.
\end{lemma}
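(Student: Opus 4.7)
The plan is to combine Lemma~\ref{lem:ext ieq v w} with the generic existence of a subrepresentation in $\mathbold\Omega^0(\CS,\CF)$ guaranteed by Lemma~\ref{lem:generic}, after first observing that the filtered $\Ext$-dimension is invariant under the Borel action on the first argument.

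First, I would verify the invariance: for any $b\in B_Q(\CV,\CF)$, $v\in\CH_Q(\CV)$, and $w\in\CH_Q(\CW)$,
\[
  \dim \Ext_{Q,\CF,\CG}(b\cdot v,w) = \dim \Ext_{Q,\CF,\CG}(v,w).
\]
Indeed, the componentwise right-multiplication $\Phi \mapsto (\Phi_x b_x^{-1})_{x\in Q_0}$ is a linear automorphism of $\g_{Q,\CF,\CG}(\CV,\CW)$, since each $b_x$ preserves the filtration $F_x$. Together with the analogous right action on $\CH_Q(\CV,\CW)$ (arrow by arrow), a direct computation from~\eqref{eq:delta} intertwines $\delta_{v,w}$ with $\delta_{b\cdot v,w}$, so the two maps have cokernels of the same dimension.

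Next, because $\CS\subseteq_Q \CV$, Lemma~\ref{lem:generic} produces a nonempty Zariski-open set $U\subseteq\CH_Q(\CV)$ such that every $v\in U$ admits a subrepresentation lying in $\mathbold\Omega^0(\CS,\CF)=B_Q(\CV,\CF)\cdot\CS$. By upper semicontinuity of $\dim \Ext$, the locus $U'\subseteq\CH_Q(\CV)\times\CH_Q(\CW)$ on which the minimum $\ext_{Q,\CF,\CG}(\CV,\CW)$ is attained is a nonempty Zariski-open subset. Choose $(v,w)\in (U\times\CH_Q(\CW))\cap U'$, write the subrepresentation as $b\cdot \CS$ for some $b\in B_Q(\CV,\CF)$, and set $v'=b^{-1}\cdot v$. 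Then $\CS$ is a genuine subrepresentation of $v'$, and by the invariance above the pair $(v',w)$ still realizes the minimum.

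Applying Lemma~\ref{lem:ext ieq v w} to the subrepresentation $\CS$ of $v'$ then yields
\[
  \ext_{Q,\CF,\CG}(\CV,\CW) = \dim\Ext_{Q,\CF,\CG}(v',w) \geq \dim \Ext_{Q,\CF_\CS,\CG}(v'|_\CS,w) \geq \ext_{Q,\CF_\CS,\CG}(\CS,\CW),
\]
which is the required inequality. The only mildly delicate point is the invariance claim, where one must use $b^{-1}$ on the correct side so that the filtration conditions are preserved and so that the intertwining really produces $b\cdot v$; once that is in hand, the remainder is a density argument plus a direct appeal to the preceding lemma.
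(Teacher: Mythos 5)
Your proof is correct and takes essentially the same approach as the paper: both use \cref{lem:ext ieq v w} together with the genericity statement of \cref{lem:generic} and a $B_Q(\CV,\CF)$-invariance argument. The only cosmetic difference is that the paper translates the reference subspace $\CS$ to $\CT = b\cdot\CS$ while keeping $v$ fixed and then invokes $B$-invariance in the form $\ext_{Q,\CF_\CT,\CG}(\CT,\CW)=\ext_{Q,\CF_\CS,\CG}(\CS,\CW)$, whereas you translate $v$ to $v'=b^{-1}\cdot v$ (verifying the intertwining of $\delta_{v,w}$ and $\delta_{b\cdot v,w}$ explicitly) so that $\CS$ itself is a subrepresentation of $v'$ -- these are the same invariance stated on the two sides of the pairing.
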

\begin{proof}
  For generic $v\in\CH_Q(\CV)$ and $w\in\CH_Q(\CW)$,
  \begin{align*}
    \dim \Ext_{Q,\CF,\CG}(v,w) = \ext_{Q,\CF,\CG}(\CV,\CW)
  \end{align*}
  and $v$ has a subrepresentation~$\CT$ in the~$B_Q(\CV,\CF)$-orbit of~$\CS$ (since $\CS$ is $Q$-intersecting).
  Thus:
  \begin{align*}
    \ext_{Q,\CF,\CG}(\CV,\CW)
  &= \dim \Ext_{Q,\CF,\CG}(v,w)
  \geq \dim \Ext_{Q,\CF_\CT,\CG}(v|_\CT,w)\\
  &\geq \ext_{Q,\CF_\CT,\CG}(\CT,\CW)
  = \ext_{Q,\CF_\CS,\CG}(\CS,\CW).
  \end{align*}
  The first inequality is \cref{lem:ext ieq v w}.
  The equality at the end holds by~$B_Q(\CV,\CF)$-invariance.
\end{proof}

\begin{proof}[Proof of \cref{thm:ext via eul}]
It follows from \cref{lem:ext ieq,eq:eul = hom - ext} that, for every $\CS\subseteq_Q\CV$,
\begin{align}\label{eq:ext via eul ieq}
  \ext_{Q,\CF,\CG}(\CV,\CW) \geq -\eul_{Q,\CF_\CS,\CG}(\CS,\CW).
\end{align}

We will prove by induction over the dimension of $\CV$ that there always exists~$\CS\subseteq_Q\CV$ that saturates the inequality.
If $\hom_{Q,\CF,\CG}(\CV,\CW)=0$ then \cref{eq:eul = hom - ext} shows that equality holds for~$\CS=\CV$.
This also covers the base case of the induction (i.e., the case that $d(\CV)=0$).
We can therefore assume that $\hom_{Q,\CF,\CG}(\CV,\CW)>0$.
Consider:
\begin{align*}
  \mathbb Y \coloneqq \{ (\Phi,v,w) : \Phi\in\Hom_{Q,\CF,\CG}(v,w), v\in\CH_Q(\CV), w\in\CH_Q(\CW) \}
\end{align*}
(\Cref{ex:quiver W2 red} below shows that $\mathbb Y$ need not be irreducible.)
Consider the projection
\begin{align*}
  q&\colon \mathbb Y \to \CH_Q(\CV) \times \CH_Q(\CW), \; (\Phi,v,w) \mapsto (v,w).
\end{align*}
Let $Z$ denote the nonempty Zariski-open subset of $(v,w)\in\CH_Q(\CV)\times\CH_Q(\CW)$ where $\dim \Hom_{Q,\CF,\CG}(v,w)=\hom_{Q,\CF,\CG}(\CV,\CW)$.
Then, $\mathbb Y_q \coloneqq q^{-1}(Z)$ is a vector bundle over $Z$ with fiber of dimension~$\hom_{Q,\CF,\CG}(\CV,\CW)$.
Since $Z$ is Zariski-open, it follows that $\mathbb Y_q$ is a smooth irreducible variety of dimension
\begin{equation}\label{eq:dim Y_q}
\begin{aligned}
  \dim \mathbb Y_q
&= \dim Z + \hom_{Q,\CF,\CG}(\CV,\CW)\\
&= \dim \CH_Q(\CV) + \dim \CH_Q(\CW) + \hom_{Q,\CF,\CG}(\CV,\CW).
\end{aligned}
\end{equation}

For each $x\in Q_0$, let $\delta_x$ denote the minimal dimension of $\ker(\Phi_x)$ as we vary $(\Phi,v,w)\in\mathbb Y_q$.
There exists a nonempty Zariski-open subset of $\mathbb Y_q$ where the minimum is obtained for every $x\in Q_0$.
It follows that $\bolddelta=(\delta_x)_{x\in Q_0}$ is the dimension vector of a family of subspaces~$\ker(\Phi)\subseteq\CV$.

In fact, $\bolddelta$ is a Schofield subdimension vector.
Indeed, by construction, for generic~$v$ there exists $(w,\Phi)$ such that $(v,w)\in Z$, $\Phi\in\Hom_{Q,\CF,\CG}(v,w)$, and $\dim\ker\Phi=\bolddelta$.
The condition $\Phi v = w\Phi$ implies that~$\ker(\Phi)$ is a subrepresentation of~$v$.
Moreover, $\bolddelta\neq\dim\CV$, since $\hom_{Q,\CF,\CG}(\CV,\CW)>0$ by assumption.

We can further consider the subspaces $\ker(\Phi_x)\cap F_x(i)$ for each~$x\in Q_0$ and~$i$ and similarly minimize their dimensions.
We thus obtain a Zariski-open subset of $\mathbb Y_q$ such that $\ker(\Phi)$ belongs to a fixed Schubert cell $\boldOmega^0(\CS,\CF)$ of $\Gr_Q(\bolddelta,\CV)$.
We call $\CS$ a \emph{generic kernel subrepresentation}.
Note that $\CS\subset_Q\CV$, arguing as before.

\begin{claim}\label{claim:dim hom via eul}
$\hom_{Q,\CF,\CG}(\CV,\CW) = \eul_{Q,\CF_\CS,\CF_{\CV/\CS}}(\CS,\CV/\CS) + \eul_{Q,\CF_{\CV/\CS},\CG}(\CV/\CS,\CW)$.
\end{claim}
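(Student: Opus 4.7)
The plan is to compute $\dim\mathbb Y_q^0$ in two ways, where
\[
  \mathbb Y_q^0 \coloneqq \{(\Phi,v,w) \in \mathbb Y_q : \ker(\Phi) \in \mathbold\Omega^0(\CS,\CF)\}.
\]
On the one hand, $\mathbb Y_q^0$ is $B_Q(\CV,\CF)$-invariant and nonempty (the latter being the very definition of $\CS$ as a generic kernel subrepresentation), hence a dense Zariski-open subset of the irreducible variety $\mathbb Y_q$; thus $\dim\mathbb Y_q^0 = \dim\mathbb Y_q$ is given by~\eqref{eq:dim Y_q}. On the other hand, the map $(\Phi,v,w)\mapsto\ker\Phi$ projects $\mathbb Y_q^0$ onto $\mathbold\Omega^0(\CS,\CF)$, and by $B_Q(\CV,\CF)$-equivariance (translating any point of $\mathbb Y_q^0$ by $b\in B_Q(\CV,\CF)$ translates its kernel accordingly while preserving the filtered structure) this projection is surjective with isomorphic fibers; thus $\dim\mathbb Y_q^0 = \dim\mathbold\Omega(\CS,\CF) + \dim F$, where $F$ is the fiber over any fixed $\CT\in\mathbold\Omega^0(\CS,\CF)$.

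The dimension of $F$ will be computed by parameterizing its points via three essentially independent pieces of data. First, $v\in\CH_Q(\CV)$ must satisfy $v\CT\subseteq\CT$ (forced by $\Phi v\CT = w\Phi\CT = 0$), cutting the ambient dimension by $\dim\CH_Q(\CT,\CV/\CT) = \dim\CH_Q(\CS,\CV/\CS)$. Second, $\Phi$ descends to a filtered intertwiner $\bar\Phi\colon(\CV/\CT,\CF_{\CV/\CT})\to(\CW,\CG)$ for the induced representation $\bar v$ on $\CV/\CT$; the condition $\ker\Phi=\CT$ forces $\bar\Phi$ to be injective, and the mere existence of some such injective filtered $\bar\Phi$ (given by any point of $\mathbb Y_q^0$) ensures that the injective $\bar\Phi$ form a dense Zariski-open subset of the linear space $\g_{Q,\CF_{\CV/\CS},\CG}(\CV/\CS,\CW)$, contributing exactly $\dim\g_{Q,\CF_{\CV/\CS},\CG}(\CV/\CS,\CW)$ dimensions. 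Third, given $(\bar v,\bar\Phi)$ with $\bar\Phi$ injective, the linear system $w\bar\Phi = \bar\Phi\bar v$ prescribes each component $w_a\colon W_x\to W_y$ on the subspace $\bar\Phi_x(V_x/T_x)\subseteq W_x$ and leaves it free on any chosen complement, contributing $\dim\CH_Q(\CW) - \dim\CH_Q(\CV/\CS,\CW)$ dimensions.

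Equating the two expressions for $\dim\mathbb Y_q^0$ and cancelling the common $\dim\CH_Q(\CV) + \dim\CH_Q(\CW)$, the identity
\[
  \hom_{Q,\CF,\CG}(\CV,\CW) = \bigl[\dim\mathbold\Omega(\CS,\CF) - \dim\CH_Q(\CS,\CV/\CS)\bigr] + \bigl[\dim\g_{Q,\CF_{\CV/\CS},\CG}(\CV/\CS,\CW) - \dim\CH_Q(\CV/\CS,\CW)\bigr]
\]
emerges, and by~\eqref{eq:eul is edim} and \cref{def:filtered eul} its right-hand side is precisely $\eul_{Q,\CF_\CS,\CF_{\CV/\CS}}(\CS,\CV/\CS) + \eul_{Q,\CF_{\CV/\CS},\CG}(\CV/\CS,\CW)$, as claimed. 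The main technical step will be justifying that the three parameterizing pieces are genuinely independent with the stated dimensions; the subtlest point is the nonemptiness (and hence density) of the injective filtered $\bar\Phi$, which is exactly where the \emph{generic} nature of the kernel $\CS$ is used. Once injectivity of $\bar\Phi$ is in hand, the count for $w$ reduces to a routine complement argument inside each $W_x$.
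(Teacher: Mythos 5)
Your proposal is essentially the paper's own argument: both restrict to the open condition $\ker\Phi\in\mathbold\Omega^0(\CS,\CF)$, fiber over the Schubert cell, and parameterize the fiber by the same three pieces ($v$ preserving $\CS$, an injective filtered $\bar\Phi$ on $\CV/\CS$, and $w$ determined on $\bar\Phi(\CV/\CS)$ and free on a complement), then equate with~\eqref{eq:dim Y_q}. The only organizational difference is that the paper works with $\mathbb Y_p$ (dropping the $Z$-condition) and factors the projection through the intermediate variety $\mathbb V=\{\Phi:\ker\Phi\in\mathbold\Omega^0\}$ to make the bundle structure, and in particular the irreducibility needed to pass between $\mathbb Y_p$ and $\mathbb Y_q$, completely explicit -- precisely the ``technical step'' you flag at the end, since your fiber count is really for the fiber of $\mathbb Y_p$ and you must know that imposing $(v,w)\in Z$ cuts out a dense open subset of an irreducible fiber.
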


\begin{claim}\label{claim:hom ieq}
$\hom_{Q,\CF_\CS,\CG}(\CS,\CW) \geq \hom_{Q,\CF_\CS,\CF_{\CV/\CS}}(\CS,\CV/\CS)$.
\end{claim}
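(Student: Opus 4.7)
The plan is to construct an explicit injective linear map between the two spaces of filtered homomorphisms using the generic kernel structure supplied by the previous paragraph, and then to promote the resulting pointwise dimension inequality to the desired inequality of generic minima.

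First I would pick a generic triple $(\Phi, v, w) \in \mathbb Y_q$. By the definition of $\CS$ as a generic kernel subrepresentation and by $B_Q(\CV, \CF)$-equivariance, after replacing $(\Phi, v, w)$ by a Borel translate we may assume $\ker(\Phi) = \CS$ on the nose. Since $\Phi$ is filtered and vanishes on $\CS$, it factors as $\Phi = \bar\Phi \circ \pi$, where $\pi\colon \CV \to \CV/\CS$ is the canonical projection and $\bar\Phi\colon \CV/\CS \hookrightarrow \CW$ is an injective filtered morphism with respect to $\CF_{\CV/\CS}$ and $\CG$; the identity $\Phi v = w \Phi$ descends to $\bar\Phi \circ v|_{\CV/\CS} = w \circ \bar\Phi$. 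Post-composition with $\bar\Phi$ then defines a linear map
\[
  \Hom_{Q, \CF_\CS, \CF_{\CV/\CS}}(v|_\CS, v|_{\CV/\CS}) \longrightarrow \Hom_{Q, \CF_\CS, \CG}(v|_\CS, w), \qquad \Psi \longmapsto \bar\Phi \circ \Psi,
\]
which is well-defined (the composition is filtered and intertwines $v|_\CS$ with $w$, by a direct check using the two factorizations above) and injective, since $\bar\Phi$ is. Consequently $\dim \Hom_{Q, \CF_\CS, \CG}(v|_\CS, w) \geq \dim \Hom_{Q, \CF_\CS, \CF_{\CV/\CS}}(v|_\CS, v|_{\CV/\CS})$.

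To convert this pointwise inequality into the inequality of minima stated in the claim, I would argue that for a generic triple in the dense open subvariety $\mathbb Y_q^\CS \coloneqq \{(\Phi, v, w) \in \mathbb Y_q : \ker \Phi = \CS\}$, both sides simultaneously attain their absolute minima $\hom_{Q, \CF_\CS, \CG}(\CS, \CW)$ and $\hom_{Q, \CF_\CS, \CF_{\CV/\CS}}(\CS, \CV/\CS)$. By upper-semicontinuity of $\dim \Hom$, this reduces to dominance of the two natural projections from $\mathbb Y_q^\CS$ onto $\CH_Q(\CS) \times \CH_Q(\CW)$ and onto $\CH_Q(\CS) \times \CH_Q(\CV/\CS)$. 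Dominance of the second is essentially a block construction: given $(v', v'')$, set $v = v' \oplus v''$ in a fixed splitting $\CV = \CS \oplus \CU$, fix any injective filtered $\bar\Phi_0\colon \CV/\CS \hookrightarrow \CW$ (one exists because $\CS$ is a generic kernel subrepresentation), let $\Phi_0 = \bar\Phi_0 \circ \pi$, and define $w$ so that $w \bar\Phi_0 = \bar\Phi_0 v''$ on $\bar\Phi_0(\CV/\CS)$ and arbitrarily on a chosen complement. The hard part, and the main obstacle, is dominance of the first projection: it demands that for generic $w \in \CH_Q(\CW)$ the representation $(\CW, w)$ already admits a filtered sub-object of filtered type $(\CV/\CS, \CF_{\CV/\CS})$ for some choice of $v''$. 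This is not automatic from the existence of $\bar\Phi$ for a single triple, but I expect it to follow from the irreducibility of $\mathbb Y_q^\CS$ combined with the freedom in the residual data of $w$ (its restriction to a complement of $\bar\Phi(\CV/\CS)$), which allows $(v|_\CS, w)$ to sweep out a Zariski-dense subset of $\CH_Q(\CS) \times \CH_Q(\CW)$. Once both dominance statements are in hand, combining the two generic equalities with the displayed dimension inequality gives $\hom_{Q, \CF_\CS, \CG}(\CS, \CW) \geq \hom_{Q, \CF_\CS, \CF_{\CV/\CS}}(\CS, \CV/\CS)$, as required.
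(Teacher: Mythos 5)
The core algebraic device in your proposal---factoring the generic $\Phi$ through an injective $\bar\Phi\colon\CV/\CS\hookrightarrow\CW$ and post-composing to get an injective map between the two $\Hom$ spaces---is exactly the paper's, but the way you thread the genericity afterwards creates a gap that the paper's proof avoids.

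The problem is that you insist on using $s = v|_\CS$ as the first argument of both $\Hom$ spaces, which couples $s$ to the triple $(\Phi,v,w)$. This forces you into proving dominance of the map $(\Phi,v,w)\mapsto(v|_\CS,w)$, a statement you yourself flag as ``the hard part, and the main obstacle,'' and for which you offer only a heuristic (``I expect it to follow from irreducibility\dots''). That expectation is not justified: after conjugating to arrange $\ker\Phi=\CS$, the restriction $v|_\CS$ is entangled with $\Phi$ and with the Borel translate used, and it is not obvious that it sweeps out a dense subset of $\CH_Q(\CS)$. (Your proposed variety $\mathbb Y_q^\CS=\{\ker\Phi=\CS\}$ is also not dense open in $\mathbb Y_q$; the dense open locus is where $\ker\Phi$ lies in the Schubert \emph{cell}, and $\mathbb Y_q^\CS$ is a proper slice of that.) You also claim to need simultaneous dominance of two projections; in fact only the first is needed, since the target $\dim\Hom_{Q,\CF_\CS,\CF_{\CV/\CS}}(v|_\CS,v|_{\CV/\CS})$ is automatically $\geq \hom_{Q,\CF_\CS,\CF_{\CV/\CS}}(\CS,\CV/\CS)$ by definition of $\hom$ as a minimum.

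The paper sidesteps all of this by \emph{decoupling} $s$ from $v$: choose an arbitrary generic pair $(s,w)\in\CH_Q(\CS)\times\CH_Q(\CW)$ realizing $\hom_{Q,\CF_\CS,\CG}(\CS,\CW)$; since $w$ is generic, the definition of the generic kernel subrepresentation produces some $(v,\Phi)$ with $(v,w)\in Z$ and $\ker\Phi=\CS$ (after a Borel translate), and then one composes with the resulting $\bar\Phi$. The same computation you did shows $\bar\Phi\circ\Psi$ intertwines $s$ with $w$ for any $\Psi$ that intertwines $s$ with $\bar v$ (the intertwining never used $s=v|_\CS$). One then needs only the trivial bound $\dim\Hom_{Q,\CF_\CS,\CF_{\CV/\CS}}(s,\bar v)\geq\hom_{Q,\CF_\CS,\CF_{\CV/\CS}}(\CS,\CV/\CS)$, and no dominance argument at all.
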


\noindent
We will prove these two claims below.
As a consequence,
\begin{align*}
&\quad \ext_{Q,\CF,\CG}(\CV,\CW) - \ext_{Q,\CF_\CS,\CG}(\CS,\CW) \\
&= \hom_{Q,\CF,\CG}(\CV,\CW) - \eul_{Q,\CF,\CG}(\CV,\CW) - \hom_{Q,\CF_\CS,\CG}(\CS,\CW) + \eul_{Q,\CF_\CS,\CG}(\CS,\CW) \\
&= \hom_{Q,\CF,\CG}(\CV,\CW) - \eul_{Q,\CF_{\CV/\CS},\CG}(\CV/\CS,\CW) - \hom_{Q,\CF_\CS,\CG}(\CS,\CW) \\
&= \eul_{Q,\CF_\CS,\CF_{\CV/\CS}}(\CS,\CV/\CS) - \hom_{Q,\CF_\CS,\CG}(\CS,\CW) \\
&\leq \eul_{Q,\CF_\CS,\CF_{\CV/\CS}}(\CS,\CV/\CS) - \hom_{Q,\CF_\CS,\CF_{\CV/\CS}}(\CS,\CV/\CS) \\
&= -\ext_{Q,\CF_\CS,\CF_{\CV/\CS}}(\CS,\CV/\CS)
\leq 0
\end{align*}
Here we used \cref{eq:eul = hom - ext}, \cref{eq:add eul}, \cref{claim:dim hom via eul}, \cref{claim:hom ieq}, and again \cref{eq:eul = hom - ext} (in this order).
Thus, we obtain that $\ext_{Q,\CF,\CG}(\CV,\CW) \leq \ext_{Q,\CF_\CS,\CG}(\CS,\CW)$.
Since the reverse inequality also holds by \cref{lem:ext ieq}, we obtain the following fundamental formula:
\begin{align}\label{eq:ext=ext}
  \ext_{Q,\CF,\CG}(\CV,\CW) = \ext_{Q,\CF_\CS,\CG}(\CS,\CW).
\end{align}

This readily allows us to conclude the proof of the theorem.
Since~$\CS\subset_Q\CV$, by induction, there exists $\CT\subseteq_Q\CS$ such that%
\footnote{In fact, we may construct such a~$\CT$ via a cascade of generic kernel subrepresentations.
If $\hom_{Q,\CF_\CS,\CG}(\CS,\CW)=0$ then $\ext_{Q,\CF_\CS,\CG}(\CS,\CW) = -\eul_{Q,\CF_\CS,\CG}(\CS,\CW)$, so we can choose $\CT=\CS$.
Otherwise, we continue recursively with a generic kernel subrepresentation for the pair $(\CS,\CW)$.}
\begin{align*}
  \ext_{Q,\CF_\CS,\CG}(\CS,\CW) = -\eul_{Q,\CF_\CT,\CG}(\CT,\CW).
\end{align*}
By \cref{eq:ext=ext}, it follows that
\begin{align*}
  \ext_{Q,\CF,\CG}(\CV,\CW) = -\eul_{Q,\CF_\CT,\CG}(\CT,\CW).
\end{align*}
Thus, \cref{eq:ext via eul ieq} is saturated for~$\CT$.
Since also $\CT\subseteq_Q\CV$ by \cref{lem:transitive}, this concludes the proof.
\end{proof}

\begin{proof}[Proof of \cref{claim:dim hom via eul}]
Abbreviate $\boldOmega^0=\boldOmega^0(\CS,\CF)$.
Consider the variety
\begin{align*}
  \mathbb Y_p = \{ (\Phi,v,w) \in \mathbb Y : \ker\Phi\in\boldOmega^0 \}.
\end{align*}
Here we do \emph{not} assume that~$(v,w)$ belong to~$Z$, so it does \emph{not} follow that~$\mathbb Y_p$ is contained in~$\mathbb Y_q$.
However, $\mathbb Y_p \cap \mathbb Y_q$ is a nonempty Zariski-open subset of both varieties.
Consider
\begin{align*}
  \mathbb V = \{ \Phi \in \g_{Q,\CF,\CG}(\CV,\CW) : \ker\Phi\in\boldOmega^0 \}.
\end{align*}
This is a $B_Q(\CV,\CF)$-equivariant bundle over the homogeneous space~$\boldOmega^0$.
The fibers can be identified with the injective maps in $\g_{Q,\CF_{\CV/\CS},\CG}(\CV/\CS,\CW)$ (by construction, this is a nonempty open subset).
Thus, $\mathbb V$ is a smooth irreducible variety of dimension
\begin{align}\label{eq:dim V}
  \dim\mathbb V = \dim\boldOmega^0 + \dim \g_{Q,\CF_{\CV/\CS},\CG}(\CV/\CS,\CW).
\end{align}
We claim that the projection
\begin{align*}
  p\colon \mathbb Y_p\to\mathbb V, \; (\Phi,v,w) \mapsto \Phi
\end{align*}
defines a vector bundle.
To see this, consider the fiber at some~$\Phi$ with $\ker\Phi=\CS$ (by equivariance, this is without loss of generality), which consists of the $(v,w)$ such that $\Phi v=w\Phi$.
To implement this condition, choose a complement~$\CT$ of~$\CS$ in~$\CV$ and denote $\CM=\Phi\CT$.
Then we have $v \CS \subseteq\CS$, while on $\CT$, $\Phi$ is an isomorphism onto $\CM$, so we find that $w(m)=\Phi(v(\Phi^{-1}(m)))$ for all $m\in\CM$.
If we also choose a complement~$\CN$ of~$\CM$ in~$\CW$ then we can write
\begin{align*}
  v = \begin{pmatrix}v_{00} & v_{01} \\ 0 & v_{11}\end{pmatrix}, \quad
  w = \begin{pmatrix}w_{00} & w_{01} \\ 0 & w_{11}\end{pmatrix}.
\end{align*}
with respect to $\CV=\CS\oplus\CT$ and $\CW=\CM\oplus\CN$, where $w_{00}$ is determined by $v_{00}$ (and~$\Phi$);
all other entries are completely arbitrary.
Thus, the fibers of~$p$ are vector spaces of dimension
\begin{equation}\label{eq:dim fiber over V}
\begin{aligned}
&\dim \CH_Q(\CV) - \dim \CH_Q(\CS,\CV/\CS) \\
+ &\dim \CH_Q(\CW) - \dim \CH_Q(\CV/\CS,\CW),
\end{aligned}
\end{equation}
and we obtain that $\mathbb Y_p$ is a vector bundle over the smooth irreducible variety~$\mathbb V$, hence itself smooth and irreducible.
Combining \cref{eq:dim V,eq:dim fiber over V}, we find that
\begin{align*}
  \dim \mathbb Y_p
&= \dim\boldOmega^0 + \dim \g_{Q,\CF_{\CV/\CS},\CG}(\CV/\CS,\CW) + \dim \CH_Q(\CV) \\
&- \dim \CH_Q(\CS,\CV/\CS) + \dim \CH_Q(\CW) - \dim \CH_Q(\CV/\CS,\CW).
\end{align*}
Since $\mathbb Y_p\cap\mathbb Y_q$ is a nonempty Zariski-open subset of both irreducible varieties, this is also the dimension of~$\mathbb Y_q$.
Comparing with \cref{eq:dim Y_q},
\begin{align*}
\hom_{Q,\CF,\CG}(\CV,\CW)
&= \dim\boldOmega^0 + \dim \g_{Q,\CF_{\CV/\CS},\CG}(\CV/\CS,\CW) \\
&- \dim \CH_Q(\CS,\CV/\CS) - \dim \CH_Q(\CV/\CS,\CW)
\end{align*}
and using \cref{def:filtered eul,eq:eul is edim} we obtain \cref{claim:dim hom via eul}.
\end{proof}

\begin{proof}[Proof of \cref{claim:hom ieq}]
Let $s\in \CH_Q(\CS)$ and $w\in \CH_Q(\CW)$ such that
\begin{align*}
  \dim \Hom_{Q,\CF_\CS,\CG}(s,w)
= \hom_{Q,\CF_\CS,\CG}(\CS,\CW).
\end{align*}
Here, $w$ can vary in an open subset of~$\CH_Q(\CW)$.
Thus, by definition of the generic kernel subrepresentation~$\CS$, there exists~$v\in\CH_Q(\CV)$ and~$\Phi\in\Hom_{Q,\CF,\CG}(v,w)$ such that~$(v,w)\in Z$ and~$\ker\Phi\in\boldOmega^0(\CS,\CF)$.
By $B_Q(\CV,\CF)$-equivariance, we may assume that~$\ker\Phi=\CS$.

Since $\CS$ is a subrepresentation of~$v$, we can consider the quotient maps~$\bar v\colon\CV/\CS\to\CV/\CS$ and $\bar\Phi\in\Hom_{Q,\CF_{\CV/\CS},\CG}(\bar v,w)$.
The latter is injective, so composition with $\bar\Phi$ defines an injective map
\begin{align*}
  \Hom_{Q,\CF_\CS,\CF_{\CV/\CS}}(s,\bar v)
\hookrightarrow \Hom_{Q,\CF_\CS,\CG}(s,w).
\end{align*}
Thus:
\begin{align*}
  \dim \Hom_{Q,\CF_\CS,\CG}(s,w)
&\geq \dim \Hom_{Q,\CF_\CS,\CF_{\CV/\CS}}(s,\bar v) \\
&\geq \hom_{Q,\CF_\CS,\CF_{\CV/\CS}}(\CS,\CV/\CS),
\end{align*}
which concludes the proof.
\end{proof}

\begin{example}\label{ex:quiver W2 red}
Consider the quiver~$W_2$ from \cref{ex:quiver W2}.
Let $\CV=(\C,\C)$ and choose $\CF$ to be the standard filtration.
Then we can identify $\CH_Q(\CV)=\C^2$ and $\g_{Q,\CF,\CF}=\C^2$.
Given $(v_1,v_2), (w_1,w_2)\in\CH_Q(\CV)$ and $(\Phi_1,\Phi_2)\in\g_{Q,\CF,\CF}$, the condition that $\Phi\in\Hom_{Q,\CF,\CF}(v,w)$ means that
\begin{align*}
  \Phi_2 v_1 = w_1 \Phi_1
  \quad\text{and}\quad
  \Phi_2 v_2 = w_2 \Phi_1.
\end{align*}
Thus, the variety~$\mathbb Y$ in the proof of \cref{thm:ext via eul} is
\begin{align*}
  \mathbb Y
= \{ \Phi_1 = \Phi_2 = 0 \} \;\cup\; \{ v_1 w_2 - v_2 w_1 = 0, \, \Phi_2 v_1 = w_1 \Phi_1 \}
\end{align*}
so $\mathbb Y$ has two irreducible components, each of dimension~4.
\end{example}

\begin{remark}\label{rem:opt test}
In the minimization of \cref{thm:ext via eul}, we only need to consider families of subspaces~$\CS$ that can arise as generic kernel subrepresentations, as well as possibly~$\CS=(\{0\})$ and~$\CS=\CV$.
In many examples, this allows to a priori restrict the minimization to families with particular properties.

For example, suppose that $\dim V_x = \dim V_y$ and $\dim W_x = \dim W_y$ for one or more arrows $a\colon x\to y\in A$.
Then, for generic $v\in\CH_Q(\CV)$ and~$w\in\CH_Q(\CW)$, the corresponding components~$v_a$ and~$w_a$ are isomorphisms, so~$\Phi_y = w_a \Phi_x v_a^{-1}$ and $\dim\ker\Phi_x = \dim \ker\Phi_y$.
Thus, in this case we can restrict the minimization to subspaces~$\CS$ that satisfy $\dim S_x=\dim S_y$ for each such arrow.
\end{remark}

\section{Proof of the main theorem}\label{sec:6}
In this section, we will establish \cref{thm:main}.
In fact, we will prove a refined version, which asserts that we only need to consider subspaces for which the generic intersection variety consists of a single point:

\begin{theorem}\label{thm:main refined}
Let $(\CV,\CF)$ be a filtered dimension vector and $\CS$ a family of subspaces as above.
Then, $\CS \subseteq_Q \CV$ if and only if
\begin{enumerate}[label=\emph{(\Alph*)},ref={(\Alph*)}]
\item\label{it:main refined A} $\edim_{Q,\CF}(\CS,\CV) \geq 0$,
\item\label{it:main refined B} $\CT \subset_Q \CV$ for every $\CT\in P_Q(\CS,\CF_\CS)$, $\CT\neq\CS$.
\end{enumerate}
\end{theorem}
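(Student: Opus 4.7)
For necessity, condition (A) is immediate from Lemma~\ref{lem:necessary}. Condition (B) follows by transitivity (Lemma~\ref{lem:transitive}): given $\CT \in P_Q(\CS,\CF_\CS)$ with $\CT \neq \CS$, we have $\CT \subset_Q \CS$ (strictly proper), so $\CS \subseteq_Q \CV$ combined with transitivity yields $\CT \subseteq_Q \CV$, and the strict containment $\CT \subsetneq \CV$ then gives $\CT \subset_Q \CV$. For sufficiency, assume (A) and (B). By Theorem~\ref{thm:Q-intersecting iff ext=0}, it suffices to show $\ext_{Q,\CF_\CS,\CF_{\CV/\CS}}(\CS,\CV/\CS) = 0$; by Theorem~\ref{thm:ext via eul}, this reduces to showing
\[
\eul_{Q,\CF_\CT,\CF_{\CV/\CS}}(\CT,\CV/\CS) \geq 0 \quad \text{for every } \CT \subseteq_Q \CS.
\]
Applying additivity \eqref{eq:add eul 2nd} to the exact sequence $0 \to \CS/\CT \to \CV/\CT \to \CV/\CS \to 0$, this Euler form equals $\edim_{Q,\CF}(\CT,\CV) - \edim_{Q,\CF_\CS}(\CT,\CS)$, so the required inequality reads $\edim_{Q,\CF}(\CT,\CV) \geq \edim_{Q,\CF_\CS}(\CT,\CS)$. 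This is immediate when $\CT = \CS$ (by (A)) or $\CT = (\{0\})$ (trivial).

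For nontrivial proper $\CT \subseteq_Q \CS$, Remark~\ref{rem:opt test} lets us restrict to those $\CT$ arising as generic kernel subrepresentations in the cascade construction from the proof of Theorem~\ref{thm:ext via eul}. The key claim---established via an adaptation of the argument of Belkale \cite{MR2177198} (see also \cite{sherman1,BVW})---is that such minimizing $\CT$ may be chosen in $P_Q(\CS,\CF_\CS)$. Granted this claim, we conclude as follows: for such $\CT^\star \in P_Q(\CS,\CF_\CS)$ with $\CT^\star \neq \CS$, condition (B) yields $\CT^\star \subset_Q \CV$, so Lemma~\ref{lem:necessary} gives $\edim_{Q,\CF}(\CT^\star,\CV) \geq 0$; and $\CT^\star \in P_Q$ forces $\edim_{Q,\CF_\CS}(\CT^\star,\CS) = 0$. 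Together these establish the required inequality, completing the reduction.

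The main obstacle is therefore refining the cascade so that its minimizing families lie in $P_Q(\CS,\CF_\CS)$. The strategy is a Belkale-style geometric specialization. If a cascade witness $\CT$ has $\edim_{Q,\CF_\CS}(\CT,\CS) > 0$, the generic intersection variety $\mathbold\Omega(\CT,\CF_\CS)_s$ has positive dimension, and its generic point lies in a strictly smaller Schubert cell $\mathbold\Omega(\CT',\CF_\CS)$; one shows $\CT' \subseteq_Q \CS$ while the Euler form $\eul_{Q,\CF_{\CT'},\CF_{\CV/\CS}}(\CT',\CV/\CS)$ does not increase, which uses that the containment $\mathbold\Omega(\CT',\CF) \subseteq \mathbold\Omega(\CT,\CF)$ is compatible with the incidence projection to $\CH_Q(\CV)$. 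Iterating, we reach $\edim_{Q,\CF_\CS} = 0$, and a tangent-space analysis of the incidence projection $\{(\CT'',s) \in \mathbold\Omega(\CT,\CF_\CS) \times \CH_Q(\CS) : s\CT'' \subseteq \CT''\} \to \CH_Q(\CS)$---analogous to the analysis in the proof of Theorem~\ref{thm:Q-intersecting iff ext=0}, combined with an auxiliary argument handling residual multiplicity when the generic intersection consists of several points in a single Schubert stratum (cf.\ Example~\ref{ex:quiver W2})---reduces the generic fiber to a single reduced point, placing $\CT$ in $P_Q(\CS,\CF_\CS)$. The principal technical challenge is tracking the behavior of the Euler form $\eul_{Q,\CF_\CT,\CF_{\CV/\CS}}(\CT,\CV/\CS)$ through this iterated specialization; this is carried out by an inductive dimension count on filtered homomorphism spaces using \eqref{eq:add eul} and \eqref{eq:add eul 2nd}, paralleling the Belkale--Sherman analysis adapted to our filtered quiver setting.
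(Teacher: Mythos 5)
Your reduction of sufficiency is exactly the paper's: applying Theorem~\ref{thm:Q-intersecting iff ext=0} to reduce to $\ext_{Q,\CF_\CS,\CF_{\CV/\CS}}(\CS,\CV/\CS)=0$, applying Theorem~\ref{thm:ext via eul} to reduce that to nonnegativity of filtered Euler numbers over $\CT\subseteq_Q\CS$, rewriting the Euler number as $\edim_{Q,\CF}(\CT,\CV)-\edim_{Q,\CF_\CS}(\CT,\CS)$ via \eqref{eq:add eul 2nd} and \eqref{eq:eul is edim}, and then isolating the key claim---that the negative-Euler-number witness may be chosen in $P_Q(\CS,\CF_\CS)$, which is precisely Corollary~\ref{cor:diabolic}. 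Up to that point the proposal is correct and essentially identical to the paper. The gap is in your sketch of the key claim.

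You propose to degenerate a cascade witness $\CT$ to a strictly smaller Schubert cell when $\edim_{Q,\CF_\CS}(\CT,\CS)>0$, asserting that the generic point of $\mathbold\Omega(\CT,\CF_\CS)_s$ lies in a smaller cell $\mathbold\Omega^0(\CT',\CF_\CS)$. This is backwards: by Lemma~\ref{lem:generic}, if $\CT\subseteq_Q\CS$ then for generic $s$ the open cell $\mathbold\Omega^0(\CT,\CF_\CS)$ already contains a subrepresentation, so at least one component of $\mathbold\Omega(\CT,\CF_\CS)_s$ has its generic point in the \emph{open} cell, not in a proper degeneration. Moreover, passing to a boundary cell strengthens the incidence conditions, so it is not automatic (and in general false) that $\CT'\subseteq_Q\CS$; your sketch asserts this without argument. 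Finally, even if you could arrive at $\edim_{Q,\CF_\CS}(\CT,\CS)=0$, Example~\ref{ex:quiver W2} shows this is not sufficient to be in $P_Q(\CS,\CF_\CS)$, and your proposal defers the remaining multiplicity issue to an unspecified ``auxiliary argument''---which is exactly where the real work is.

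The paper resolves Corollary~\ref{cor:diabolic} by a different mechanism: fix a representation $v^*$ detecting $Q$-intersection (Lemma~\ref{lem:detecting}) and take, among \emph{all} nonzero subrepresentations of $v^*$, the one minimizing the slope $\sigma(\CN)=\eul_{Q,\CF_\CN,\CG}(\CN,\CW)/d(\CN)$ and then maximizing total dimension (Proposition~\ref{prp:unique maximin}). Uniqueness of this maximin is proved by a seesaw argument on slopes using the two short exact sequences $0\to\CS_1\cap\CS_2\to\CS_1\to\CS_1/(\CS_1\cap\CS_2)\to0$ and $0\to\CS_2\to\CS_1+\CS_2\to(\CS_1+\CS_2)/\CS_2\to0$, together with the injection $\g_{Q,\CF_1,\CG}((\CS_1+\CS_2)/\CS_2,\CW)\hookrightarrow\g_{Q,\CF_2,\CG}(\CS_1/(\CS_1\cap\CS_2),\CW)$. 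Uniqueness immediately forces the generic intersection variety to be a single point (Proposition~\ref{prp:maximin point}), so positive dimension and residual multiplicity are killed simultaneously, with no separate degeneration or tangent-space argument. To complete your proof you would need to carry out this maximin/seesaw argument rather than the specialization cascade you sketched.
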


We will need some intermediate results to prove \cref{thm:main refined}.
To test if some $\CS$ is $Q$-intersecting, we need to in principle consider generic representations in~$\CH_Q(\CV)$.
We first show that there exists a \emph{universal} representation that tests $Q$-intersection.

\begin{lemma}\label{lem:detecting}
There exists a nonempty Zariski-open set of $v^*\in\CH_Q(\CV)$ with the following property:
For every $\CS\subseteq\CV$, we have that $\CS\subseteq_Q\CV$ if and only if there exists $\CT\in\boldOmega^0(\CS,\CF)$ such that $v^*\CT\subseteq\CT$.
\end{lemma}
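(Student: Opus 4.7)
The plan is to construct $v^*$ as a generic point of a finite intersection of Zariski-open subsets of $\CH_Q(\CV)$, one for each possible Schubert cell. The key combinatorial observation is that there are only finitely many dimension vectors $\mathbold\alpha \leq \dim\CV$ and, for each such $\mathbold\alpha$, only finitely many $B_Q(\CV,\CF)$-orbits in $\Gr_Q(\mathbold\alpha,\CV)$; hence the collection of Schubert cells $\mathbold\Omega^0(\CS,\CF)$, as $\CS$ ranges over all families of subspaces of $\CV$ up to the Borel action, is finite, and a finite intersection of nonempty Zariski-open sets remains nonempty and Zariski-open.

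For each such Schubert cell I will exhibit a nonempty Zariski-open ``detecting'' set $U_\CS \subseteq \CH_Q(\CV)$, splitting into two cases. If $\CS \subseteq_Q \CV$, \cref{lem:generic} directly supplies a nonempty Zariski-open set of $v$ for which $\mathbold\Omega^0(\CS,\CF)$ already contains a subrepresentation of $v$; take this as $U_\CS$. If instead $\CS$ is not $Q$-intersecting, consider the incidence variety
\begin{align*}
  \mathbb X_\CS \coloneqq \{ (\CT,v) \in \mathbold\Omega(\CS,\CF) \times \CH_Q(\CV) : v\CT \subseteq \CT \},
\end{align*}
which is closed in the product. Since $\mathbold\Omega(\CS,\CF)$ is projective, the projection $\mathbb X_\CS \to \CH_Q(\CV)$ is proper, so its image is Zariski-closed; by the failure of $Q$-intersection this image is a proper closed subset, and I take its complement as $U_\CS$. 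On this $U_\CS$ the full variety $\mathbold\Omega(\CS,\CF)_v$, and a fortiori the open subset $\mathbold\Omega^0(\CS,\CF)_v$, is empty.

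Finally, setting $U \coloneqq \bigcap_\CS U_\CS$ over the finite list of Schubert cells yields the desired nonempty Zariski-open subset of $\CH_Q(\CV)$. By construction, for any $v^* \in U$ and any family of subspaces $\CS \subseteq \CV$, the existence of $\CT \in \mathbold\Omega^0(\CS,\CF)$ with $v^*\CT \subseteq \CT$ holds exactly when $\CS \subseteq_Q \CV$, as required. I expect the only nontrivial point to be the properness step in the second case, where one must invoke completeness of $\mathbold\Omega(\CS,\CF)$ to pass from ``there exists some bad $v$'' to ``there is a Zariski-open set of bad $v$''; everything else is essentially bookkeeping combining \cref{lem:generic} with the finiteness of Schubert cells.
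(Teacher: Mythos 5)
Your proof is correct and takes essentially the same approach as the paper: intersect, over the finitely many Schubert cells, a Zariski-open ``detecting'' set for each, invoking \cref{lem:generic} for the $Q$-intersecting cells. In the non-intersecting case you spell out the closedness of the projection image via properness of $\mathbold\Omega(\CS,\CF)\times\CH_Q(\CV)\to\CH_Q(\CV)$, whereas the paper simply observes that $\CH_Q^{\mathbold\Omega^0}$ is not Zariski-dense and takes the complement of its closure --- a purely cosmetic difference in packaging the same fact.
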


\noindent
We say that $v^*$ is \emph{detecting $Q$-intersection} in~$\CV$.

\begin{proof}
Consider the finitely many Schubert cells of the Grassmannians~$\Gr_Q(\boldalpha,\CV)$, where~$\boldalpha$ ranges over all dimension vectors~$\boldalpha\leq\dim\CV$.
For each Schubert cell $\boldOmega^0$, denote by $\boldOmega$ its closure and define
\begin{align*}
  \CH_Q^{\boldOmega^0}
&= \{ v \in \CH_Q(\CV) : \exists \CT\in\boldOmega^0 \text{ such that } v\CT\subseteq\CT \}.
\end{align*}
By \cref{lem:generic}, if $\boldOmega$ is $Q$-intersecting then $\CH_Q^{\boldOmega^0}$ contains a nonempty Zariski-open set, while it is otherwise not Zariski-dense.
Thus,
\begin{align*}
  \widetilde\CH_Q
\coloneqq \bigcap_{\boldOmega \not\subseteq_Q \CV} \overline{\CH_Q^{\boldOmega^0}}^c
\cap \bigcap_{\boldOmega \subseteq_Q \CV} \CH_Q^{\boldOmega^0}
\end{align*}
contains a nonempty Zariski-open set.
By construction, every $v^*\in \widetilde\CH_Q$ is detecting $Q$-intersection in~$\CV$.
\end{proof}

Next, we show that we can by an optimization procedure construct Schubert cells for which the generic intersection variety consists of a single point only.
Recall that $d(\CN) = \sum_{x\in Q_0} \dim N_x$ denotes the total dimension of a family of vector spaces.

\begin{definition}[Slope]
Let $(\CV,\CF)$ and $(\CW,\CG)$ be filtered dimension vectors.
We define the \emph{slope} of a nonzero subquotient~$\CN$ of~$\CV$ by
\begin{align*}
  \sigma(\CN) \coloneqq \frac1{d(\CN)} \eul_{Q,\CF_\CN,\CG}(\CN,\CW),
\end{align*}
where $\CF_\CN$ denotes the filtration induced by~$\CF$ on~$\CN$.
\end{definition}

\noindent For fixed $v\in\CH_Q(\CV)$, consider the set of subrepresentations of arbitrary dimension,
\begin{align*}
 \BS(v) \coloneqq \{ \CS\subseteq\CV : v\CS\subseteq\CS \}.
\end{align*}
Note that $\BS(v)$ is closed under vector space sum and intersection.

\begin{proposition}\label{prp:unique maximin}
Let $(\CV,\CF)$ and $(\CW,\CG)$ be filtered dimension vectors and let $v^*\in\CH_Q(\CV)$ be an element detecting $Q$-intersection in~$\CV$.
Define $\sigma^*=\min_{(\{0\})\neq\CS\in\BS(v^*)} \sigma(\CS)$ and $d^*=\max_{(\{0\})\neq\CS\in\BS(v^*), \sigma(\CS)=\sigma^*} d(\CS)$.
Then there exists a unique family $\CS^*\in\BS(v^*)$ such that $\sigma(\CS)=\sigma^*$ and $d(\CS)=d^*$.
\end{proposition}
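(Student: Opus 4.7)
The plan is a Harder--Narasimhan style argument. Existence is automatic from the definitions of $\sigma^*$ and $d^*$ (the slope takes only finitely many values on $\BS(v^*)$, since only finitely many filtered isomorphism types of subrepresentations occur), so I focus on uniqueness. The heart of the argument is to prove that the set of nonzero subrepresentations in $\BS(v^*)$ achieving the minimal slope $\sigma^*$ is closed under sums. Granted this, if $\CS_1,\CS_2$ both attain the maximal dimension $d^*$, then $\CS_1+\CS_2$ also achieves $\sigma^*$ (using that $\BS(v^*)$ is closed under sums) and has dimension at least $\max(d(\CS_1),d(\CS_2))=d^*$, with strict inequality unless one contains the other. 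By maximality of $d^*$, we must have $\CS_1=\CS_2$.

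The key technical ingredient is a sub-additivity property of the Euler number. For subspaces $S_1,S_2\subseteq V$ of a filtered vector space $(V,F)$ and any filtered space $(W,G)$, I consider the sequence
\[
0 \to \g_{F_{S_1+S_2},G}(S_1+S_2,W) \xrightarrow{r} \g_{F_{S_1},G}(S_1,W) \oplus \g_{F_{S_2},G}(S_2,W) \xrightarrow{s} \g_{F_{S_1\cap S_2},G}(S_1\cap S_2,W) \to 0,
\]
where $r(\phi)=(\phi|_{S_1},\phi|_{S_2})$ and $s(\phi_1,\phi_2)=\phi_1|_{S_1\cap S_2}-\phi_2|_{S_1\cap S_2}$. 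Injectivity of $r$ is immediate; surjectivity of $s$ reduces to extending any filtered morphism $\psi\colon S_1\cap S_2\to W$ to a filtered morphism on $S_1$, which I plan to do by building a basis of $S_1$ adapted to the filtration $F_{S_1}$ whose subset lying in $S_1\cap S_2$ is a filtered basis there, then setting $\phi$ to agree with $\psi$ on the latter and vanish on the remaining basis vectors. The sequence may fail to be exact in the middle---since $F(i)\cap(S_1+S_2)$ can be strictly larger than $F(i)\cap S_1 + F(i)\cap S_2$---but exactness at both ends already yields
\[
\dim\g_{F_{S_1+S_2},G}(S_1+S_2,W) + \dim\g_{F_{S_1\cap S_2},G}(S_1\cap S_2,W) \leq \dim\g_{F_{S_1},G}(S_1,W) + \dim\g_{F_{S_2},G}(S_2,W).
\]
Applied vertex-wise and combined with the \emph{exact} additivity of $\dim\CH_Q(\cdot,\CW)$ along the short exact sequence $0\to \CS_1\cap\CS_2\to\CS_1\oplus\CS_2\to \CS_1+\CS_2\to 0$, this yields the Euler-number inequality
\[
\eul_{Q,\CF_{\CS_1+\CS_2},\CG}(\CS_1+\CS_2,\CW) + \eul_{Q,\CF_{\CS_1\cap\CS_2},\CG}(\CS_1\cap\CS_2,\CW) \leq \eul_{Q,\CF_{\CS_1},\CG}(\CS_1,\CW) + \eul_{Q,\CF_{\CS_2},\CG}(\CS_2,\CW).
\]

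To close the sum step, let $\CS_1,\CS_2$ both have slope $\sigma^*$. Writing $d^{12}=d(\CS_1+\CS_2)$ and $d_{12}=d(\CS_1\cap\CS_2)$, we have $d^{12}+d_{12}=d(\CS_1)+d(\CS_2)$, and the sub-additivity above becomes $d^{12}\sigma(\CS_1+\CS_2)+d_{12}\sigma(\CS_1\cap\CS_2)\leq (d^{12}+d_{12})\sigma^*$. By the minimality of $\sigma^*$ over $\BS(v^*)$, both $\sigma(\CS_1+\CS_2)$ and (when $d_{12}>0$) $\sigma(\CS_1\cap\CS_2)$ are at least $\sigma^*$, so the inequality must be an equality and in particular $\sigma(\CS_1+\CS_2)=\sigma^*$, as desired. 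The main obstacle I anticipate is verifying the surjectivity of $s$: the inherited filtrations on $S_1$, $S_2$, and $S_1\cap S_2$ interact subtly, and the extension requires a basis simultaneously adapted to the chain of subspaces and to every step of the filtration; the rest of the argument is formal manipulation.
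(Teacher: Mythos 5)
Your proof is correct, and it is a genuinely different (though closely related) packaging of the same underlying idea. The paper proceeds by direct contradiction: assuming $\CS_1\neq\CS_2$ are both maximin, it considers the two short exact sequences $0\to\CS_1\cap\CS_2\to\CS_1\to\CS_1/(\CS_1\cap\CS_2)\to0$ and $0\to\CS_2\to\CS_1+\CS_2\to(\CS_1+\CS_2)/\CS_2\to0$, derives the slope inequalities $\sigma(\CS_1)\geq\sigma(\CS_1/(\CS_1\cap\CS_2))$ and $\sigma((\CS_1+\CS_2)/\CS_2)>\sigma(\CS_1)$ from minimality and extremality, and then gets a contradiction by exhibiting a natural \emph{injection} $\g_{Q,\CF_1,\CG}((\CS_1+\CS_2)/\CS_2,\CW)\hookrightarrow\g_{Q,\CF_2,\CG}(\CS_1/(\CS_1\cap\CS_2),\CW)$ (the induced filtration on $(\CS_1+\CS_2)/\CS_2$ is as large as the one on $\CS_1/(\CS_1\cap\CS_2)$, so the filtered-morphism condition is stronger). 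You instead package that same dimension comparison as sub-additivity of $\eul$ along the parallelogram sequence $0\to\CS_1\cap\CS_2\to\CS_1\oplus\CS_2\to\CS_1+\CS_2\to0$, which is equivalent to the paper's injection after applying \cref{lem:add dim} to both $\CS_1+\CS_2\supseteq\CS_2$ and $\CS_1\supseteq\CS_1\cap\CS_2$. From there you derive closure under sums of the slope-$\sigma^*$ subrepresentations—a Harder--Narasimhan-style statement—and conclude by maximality of $d^*$. Your surjectivity of $s$ is precisely the filtered splitting lemma the paper records: pick a filtered complement of $S_1\cap S_2$ in $S_1$ and extend by zero.

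The trade-off: the paper's route is shorter because it dispenses with the four-term sequence and works directly with two quotients, whereas yours isolates a cleaner structural fact (closure under sums of slope-minimizers) that is reusable and, arguably, more conceptual. Both rest on exactly the same technical dimension inequality for $\g_{F,G}$. One small virtue of your version worth keeping in mind: you do not need the maximality of $d^*$ to be invoked inside the slope manipulations (you only use minimality of $\sigma^*$), so the logic of the contradiction becomes a one-liner at the end—if $\CS_1\neq\CS_2$ then $d(\CS_1+\CS_2)>d^*$.
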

\noindent
We call $\CS^*$ the \emph{maximin subrepresentation} for~$v^*$;
it is $Q$-intersecting in~$\CV$.
\begin{proof}
Existence is clear, so we only argue for uniqueness.
Suppose for sake of finding a contradiction that $\CS_1$ and $\CS_2$ are two distinct families of subspaces with the desired maximin property.
Consider the short exact sequence
\begin{align*}
  0 \to \CS_1\cap\CS_2 \to \CS_1 \to \CS_1 / (\CS_1\cap\CS_2) \to 0.
\end{align*}
If $\CS_1\cap \CS_2\neq(\{0\})$ then
\begin{align*}
  \sigma(\CS_1) = \frac{d(\CS_1\cap \CS_2)}{d(\CS_1)} \sigma(\CS_1\cap \CS_2) + \frac{d(\CS_1 / (\CS_1\cap\CS_2))}{d(\CS_1)} \sigma(\CS_1 / (\CS_1\cap\CS_2))
\end{align*}
as follows from \cref{eq:add eul}.
Thus, $\sigma(\CS_1)$ is a convex combination of slopes.
By minimality, $\sigma(\CS_1) \leq \sigma(\CS_1\cap \CS_2)$, hence we find that
\begin{align}\label{eq:first slope bound}
  \sigma(\CS_1) \geq \sigma(\CS_1 / (\CS_1\cap\CS_2)).
\end{align}
This inequality also holds when $\CS_1\cap\CS_2=(\{0\})$.
Next, consider
\begin{align*}
  0 \to \CS_2 \to \CS_1+\CS_2 \to (\CS_1+\CS_2)/\CS_2 \to 0.
\end{align*}
Since $\CS_1\neq\CS_2$, $d(\CS_1+\CS_2)>d(\CS_2)$, so $\sigma(\CS_1+\CS_2)>\sigma(\CS_2)$ by extremality.
Thus, by the same argument,
\begin{align*}
  \sigma((\CS_1+\CS_2)/\CS_2)
> \sigma(\CS_2) = \sigma(\CS_1).
\end{align*}
Together with \cref{eq:first slope bound}, we obtain
\begin{align*}
  \sigma((\CS_1+\CS_2)/\CS_2) > \sigma(\CS_1 / (\CS_1\cap\CS_2)).
\end{align*}
As vector spaces, both quotients are isomorphic and hence have the same dimension vector and total dimension.
Thus, it follows from the definition of the slope and filtered Euler number that
\begin{align*}
  \dim \g_{Q,\CF_1,\CG}((\CS_1+\CS_2)/\CS_2,\CW) > \dim \g_{Q,\CF_2,\CG}(\CS_1 / (\CS_1\cap\CS_2),\CW),
\end{align*}
where we abbreviate the induced filtrations by $\CF_1$ and $\CF_2$.
However, the natural isomorphism that interprets each~$\bar\Phi\colon (\CS_1+\CS_2)/\CS_2\to\CW$ as a map $\CS_1/(\CS_1\cap\CS_2)\to\CW$ restricts to an injection
\begin{align*}
  \g_{Q,\CF_1,\CG}((\CS_1+\CS_2)/\CS_2,\CW) \hookrightarrow \g_{Q,\CF_2,\CG}(\CS_1 / (\CS_1\cap\CS_2),\CW),
\end{align*}
since if $\Phi\colon \CS_1+\CS_2\to\CW$ is a representative of some~$\bar\Phi$ then $\Phi((\CS_1+\CS_2) \cap \CF(i)) \subseteq \CG(i)$ implies that $\Phi(\CS_1 \cap \CF(i)) \subseteq \CG(i)$ for all~$i$.
This is the desired contradiction.
\end{proof}

\begin{lemma}\label{lem:maximin in same cell}
In the situation of \cref{prp:unique maximin}, the slope~$\sigma^*$ and dimension~$d^*$ of the maximin subrepresentation do not depend on the choice of~$v^*$.
Moreover, the maximin subrepresentations obtained by varying~$v^*$ are all in the same Schubert cell.
\end{lemma}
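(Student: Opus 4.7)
The plan is to compare the maximin data for two detecting elements $v_1^*, v_2^* \in \CH_Q(\CV)$ by transporting subrepresentations from one to the other via the $Q$-intersection property, and then to invoke the uniqueness clause of \cref{prp:unique maximin}.

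First I would record a \emph{cell-invariance} observation: both $\sigma(\CS)$ and $d(\CS)$ depend only on the Schubert cell $\mathbold\Omega^0(\CS,\CF)$ containing $\CS$. For $d$ this is obvious, and for $\sigma$ it follows because if $\CS' = b\cdot\CS$ with $b\in B_Q(\CV,\CF)$, then $b$ preserves each $F_x(i)$, so the induced filtrations $\CF_{\CS}$ and $\CF_{\CS'}$ have the same dimension sequences at every step. By \cref{lem:dim filtered morphisms}, $\dim\g_{Q,\CF_\CS,\CG}(\CS,\CW) = \dim\g_{Q,\CF_{\CS'},\CG}(\CS',\CW)$, and $\dim\CH_Q(\CS,\CW)$ depends only on $\dim\CS$, so $\eul_{Q,\CF_\CS,\CG}(\CS,\CW) = \eul_{Q,\CF_{\CS'},\CG}(\CS',\CW)$, yielding $\sigma(\CS)=\sigma(\CS')$. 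I would also note in passing that each $\CS_i^*$ is $Q$-intersecting in~$\CV$: it is a subrepresentation of the detecting element $v_i^*$, so its Schubert cell contains a subrepresentation of $v_i^*$, and the detecting property then forces that cell's closure to be $Q$-intersecting.

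Let $\mathbold\Omega_i^0 = \mathbold\Omega^0(\CS_i^*,\CF)$ denote the cell of the maximin subrepresentation of $v_i^*$, with slope $\sigma_i^*$ and dimension $d_i^*$. Since $\mathbold\Omega_1$ is $Q$-intersecting and $v_2^*$ is detecting $Q$-intersection, \cref{lem:detecting} yields some $\CT_1 \in \mathbold\Omega_1^0$ with $v_2^*\CT_1\subseteq\CT_1$, i.e., $\CT_1 \in \BS(v_2^*)$. By cell-invariance, $\sigma(\CT_1)=\sigma_1^*$ and $d(\CT_1)=d_1^*$. The minimality of $\sigma_2^*$ over $\BS(v_2^*)$ then gives $\sigma_2^* \leq \sigma_1^*$, and symmetrically $\sigma_1^* \leq \sigma_2^*$, so $\sigma_1^*=\sigma_2^*=:\sigma^*$. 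With the common slope in hand, $\CT_1$ now competes in the dimension-maximization defining $d_2^*$, so $d_2^* \geq d_1^*$; by symmetry $d_1^*=d_2^*=:d^*$.

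Finally, because $\CT_1 \in \BS(v_2^*)$ realizes both $\sigma(\CT_1)=\sigma^*$ and $d(\CT_1)=d^*$, the uniqueness part of \cref{prp:unique maximin} forces $\CT_1 = \CS_2^*$. Hence $\CS_2^* \in \mathbold\Omega_1^0$, and by symmetry $\CS_1^* \in \mathbold\Omega_2^0$, so $\mathbold\Omega_1^0=\mathbold\Omega_2^0$, as required. The only subtle point is the cell-invariance of $\sigma$; once that is established, the argument is a direct cross-comparison using the detecting property and the uniqueness from \cref{prp:unique maximin}.
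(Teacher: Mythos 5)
Your proposal is correct and follows essentially the same route as the paper's proof: transport the maximin subrepresentation for one detecting element into $\BS$ of the other via the detecting property, compare slopes and dimensions using cell-invariance, and conclude with the uniqueness clause of \cref{prp:unique maximin}. The only difference is that you spell out the cell-invariance of $\sigma$ explicitly via \cref{lem:dim filtered morphisms}, whereas the paper compresses this into the parenthetical remark that the Euler number depends only on the Schubert cell.
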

\begin{proof}
Consider another $v^\#\in\CH_Q(\CV)$ that detects $Q$-intersection and let~$\CS^\#$ denote the corresponding maximin subrepresentation.
Since $\CS^*$ is $Q$-intersecting, there exists some~$\CT\in\boldOmega^0(\CS^*,\CF)$ such that~$v^\#\CT \subseteq \CT$.
Then $\sigma(\CT)=\sigma(\CS^*)$, since the Euler number only depends on the Schubert cell, and hence~$\sigma(\CS^*) \geq \sigma(\CS^\#)$.
Running the argument in reverse, we obtain that $\sigma(\CS^*) = \sigma(\CS^\#)$.
We similarly find that~$d(\CS^*) = d(\CS^\#)$, so $\CS^\#=\CT\in\boldOmega^0(\CS^*,\CF)$, which confirms the last statement.
\end{proof}

\begin{proposition}\label{prp:maximin point}
In the situation of \cref{prp:unique maximin}, the maximin subrepresentation~$\CS^*$ is in~$P_Q(\CV,\CF)$.
\end{proposition}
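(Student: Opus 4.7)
The plan is to fix a representation $v\in\CH_Q(\CV)$ detecting $Q$-intersection in~$\CV$, which exists on a nonempty Zariski-open set by \cref{lem:detecting}, and to argue that any $\CT\in\mathbold\Omega(\CS^*,\CF)_v$ must coincide with the maximin subrepresentation of~$v$. By \cref{lem:maximin in same cell}, this maximin subrepresentation has slope $\sigma^*$ and total dimension $d^*$, and by \cref{prp:unique maximin} it is unique. Because $\mathbold\Omega(\CS^*,\CF)\subseteq\Gr_Q(\dim\CS^*,\CV)$, any such $\CT$ automatically satisfies $d(\CT)=d^*$, so the substance of the argument is to verify $\sigma(\CT)=\sigma^*$.

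The step I expect to be the main technical input is the upper bound $\sigma(\CT)\leq\sigma^*$. For this I would use the standard description of the Schubert closure: at each vertex~$x$, the Schubert position $J^{T_x}$ of $T_x$ satisfies $J^{T_x}(a)\leq J^{S^*_x}(a)$ componentwise, because $\Omega(S^*_x,F_x)=\{T_x : J^{T_x}\leq J^{S^*_x}\}$. Applying \cref{lem:dim filtered morphisms} vertex by vertex and using that $i\mapsto \dim G_x(i)$ is weakly increasing,
\begin{align*}
\dim\g_{Q,\CF_\CT,\CG}(\CT,\CW)
&= \sum_{x\in Q_0}\sum_{a=1}^{\dim T_x}\dim G_x\bigl(J^{T_x}(a)\bigr)\\
&\leq \sum_{x\in Q_0}\sum_{a=1}^{\dim S^*_x}\dim G_x\bigl(J^{S^*_x}(a)\bigr)
= \dim\g_{Q,\CF_{\CS^*},\CG}(\CS^*,\CW).
\end{align*}
Since $\dim\CH_Q(\CT,\CW)$ depends only on the dimension vector $\dim\CT=\dim\CS^*$, subtracting gives $\eul_{Q,\CF_\CT,\CG}(\CT,\CW)\leq\eul_{Q,\CF_{\CS^*},\CG}(\CS^*,\CW)$, and dividing by $d^*$ yields $\sigma(\CT)\leq\sigma^*$. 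For the reverse inequality, $\CT$ is a nonzero subrepresentation of~$v$ (since $\dim\CT=\dim\CS^*\neq 0$), so $\sigma(\CT)\geq\sigma^*$ directly from the definition of $\sigma^*$ together with the $v$-independence provided by \cref{lem:maximin in same cell}. Thus $\sigma(\CT)=\sigma^*$.

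Combining these facts, $\CT$ realizes the minimum slope $\sigma^*$ with maximum total dimension $d^*$, so by the uniqueness clause of \cref{prp:unique maximin} it is uniquely determined. Hence $\mathbold\Omega(\CS^*,\CF)_v$ consists of a single point for every detecting~$v$, which is a nonempty Zariski-open condition, so $\CS^*\in P_Q(\CV,\CF)$ as required. The only nontrivial ingredient is the monotonicity of the filtered $\g$-spaces along Schubert closures, and this is an immediate consequence of the explicit formula in \cref{lem:dim filtered morphisms}.
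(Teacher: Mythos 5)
Your proof is correct and follows essentially the same path as the paper's: fix a detecting representation, observe that the total dimension of any $\CT\in\mathbold\Omega(\CS^*,\CF)_v$ is automatic, bound $\dim\g_{Q,\CF_\CT,\CG}(\CT,\CW)$ from above using the monotonicity from \cref{lem:dim filtered morphisms} along Schubert closures, and then invoke uniqueness of the maximin subrepresentation; the paper states the Schubert-closure inequality as $\dim(T_x\cap F_x(i))\geq\dim(S^\#_x\cap F_x(i))$ rather than via position indices, but the content is identical. The only thing worth adding explicitly is that $\mathbold\Omega(\CS^*,\CF)_v$ is nonempty, since $\CS^\#$ (the maximin for $v$) belongs to it -- you leave this implicit, while the paper spells it out.
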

\begin{proof}
We abbreviate $\boldOmega=\boldOmega(\CS^*,\CF)$.
It suffices to argue that $\boldOmega_{v^\#}$ is a single point for every~$v^\#\in\CH_Q(\CV)$ that is detecting $Q$-intersection (a nonempty Zariski-open set according to \cref{lem:detecting}).
We will show that $\boldOmega_{v^\#} = \{\CS^\#\}$, where $\CS^\#$~denotes the maximin subrepresentation.

Indeed, $\CS^\#$ is a subrepresentation of~$v^\#$ and, by \cref{lem:maximin in same cell}, belongs to the same Schubert cell as~$\CS^*$, so $\CS^\#\in\boldOmega_{v^\#}$.
Conversely, suppose that~$\CT\in\boldOmega_{v^\#}$.
Since it is in the same Grassmannian as $\CS^\#$, we have that~$d(\CT)=d(\CS^\#)$ and $\dim\CH_Q(\CT,\CW)=\dim\CH_Q(\CS^\#,\CW)$.
Moreover,
\begin{align}\label{eq:dim ieq sharp}
  \dim \g_{Q,\CF_\CT,\CG}(\CT,\CW) \leq \dim \g_{Q,\CF_\CS^\#,\CG}(\CS^\#,\CW).
\end{align}
Indeed, since $\CT$ is in the closure of the $B_Q(\CV,\CF)$-orbit of~$\CS^\#$, it is clear that, for each $x\in Q_0$ and $i$, $\dim T_x \cap F_x(i) \geq \dim S^\#_x \cap F_x(i)$, so \cref{eq:dim ieq sharp} follows from \cref{lem:dim filtered morphisms}.
Thus, $\sigma(\CT) \leq \sigma(\CS^\#)$ and $d(\CT)=d(\CS^\#)$.
As a consequence, $\CT=\CS^\#$ by the uniqueness of the maximin subrepresentation.
We conclude that $\boldOmega_{v^\#} = \{\CS^\#\}$, as we set out to prove.
\end{proof}

We thus obtain the following result, which strengthens the main conclusion of \cref{thm:ext via eul}.

\begin{corollary}\label{cor:diabolic}
Let $(\CV,\CF)$ and $(\CW,\CG)$ be filtered dimension vectors such that~$\ext_{Q,\CF,\CG}(\CV,\CW)>0$.
Then there exists a family~$\CT^*\in P_Q(\CV,\CF)$ such that $\eul_{Q,\CF_{\CT^*},\CG}(\CT^*,\CW)<0$.
\end{corollary}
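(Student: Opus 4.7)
The plan is to combine \cref{thm:ext via eul} with the maximin subrepresentation machinery of \cref{prp:unique maximin,prp:maximin point}. The hypothesis $\ext_{Q,\CF,\CG}(\CV,\CW)>0$ together with \cref{thm:ext via eul} furnishes a family $\CS\subseteq_Q\CV$ with $\eul_{Q,\CF_\CS,\CG}(\CS,\CW)<0$. In particular $d(\CS)>0$ (otherwise both $\g_{Q,\CF_\CS,\CG}(\CS,\CW)$ and $\CH_Q(\CS,\CW)$ vanish and the Euler number is zero), so the slope satisfies $\sigma(\CS)<0$.

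Next, I would fix some $v^*\in\CH_Q(\CV)$ detecting $Q$-intersection in $\CV$, as supplied by \cref{lem:detecting}. Since $\CS\subseteq_Q\CV$, the detecting property ensures that the Borel orbit $\mathbold\Omega^0(\CS,\CF)$ contains an element $\CS'\in\BS(v^*)$. Because the filtered Euler number depends only on the Schubert cell, $\sigma$ is constant on Borel orbits, so $\sigma(\CS')=\sigma(\CS)<0$. Consequently the quantity
$$\sigma^* = \min_{(\{0\})\neq \CN\in\BS(v^*)} \sigma(\CN)$$
appearing in \cref{prp:unique maximin} satisfies $\sigma^*\leq\sigma(\CS')<0$.

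Now apply \cref{prp:unique maximin} to extract the (unique) maximin subrepresentation $\CT^*\in\BS(v^*)$ with $\sigma(\CT^*)=\sigma^*$ and maximal total dimension $d(\CT^*)=d^*>0$. By \cref{prp:maximin point}, $\CT^*\in P_Q(\CV,\CF)$, and
$$\eul_{Q,\CF_{\CT^*},\CG}(\CT^*,\CW) = \sigma^* \cdot d(\CT^*) < 0,$$
which is exactly the assertion of the corollary.

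The only nontrivial step is the bridge in the middle: transferring the negativity of $\eul$ coming from \cref{thm:ext via eul} (an infimum over all $Q$-intersecting families) into negativity of the slope $\sigma^*$ used by the maximin construction (an infimum over subrepresentations of the single generic element $v^*$). This is precisely what the detecting property and the Borel-invariance of $\sigma$ provide; once it is in place, \cref{prp:unique maximin,prp:maximin point} do all the remaining work.
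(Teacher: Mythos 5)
Your proof is correct and follows the same path the paper takes: invoke \cref{thm:ext via eul}, pick a detecting element $v^*$, observe that $\BS(v^*)$ then contains a subrepresentation of negative slope, and conclude via the maximin subrepresentation and \cref{prp:maximin point}. You merely make explicit the intermediate step (that the detecting property and Borel-invariance of $\sigma$ place an element of negative slope in $\BS(v^*)$), which the paper states more tersely.
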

\begin{proof}
Let $v^*\in\CH_Q(\CV)$ be an element detecting $Q$-intersection.
By \cref{thm:ext via eul}, there exists $\CT\subseteq_Q\CV$ such that $\eul_{Q,\CF_\CT,\CG}(\CT,\CW)<0$.
Thus,~$\BS(v^*)$ contains an element of negative slope.
As a consequence, the maximin subrepresentation~$\CT^*$ also has negative slope, hence negative Euler number.
By \cref{prp:maximin point}, it belongs to~$P_Q(\CV,\CF)$.
\end{proof}

We now prove the main result of this article.

\begin{proof}[Proof of \cref{thm:main refined}]
As discussed before, \cref{lem:necessary,lem:transitive} show that if $\CS$ is $Q$-intersecting in~$\CV$ then~\ref{it:main refined A} and~\ref{it:main refined B} are necessarily satisfied.

We now prove the converse.
Suppose that $\CS$ is \emph{not} $Q$-intersecting in~$\CV$.
By \cref{thm:Q-intersecting iff ext=0}, this means that $\ext_{Q,\CF_\CS,\CF_{\CV/\CS}}(\CS,\CV/\CS)>0$.
Therefore, \cref{cor:diabolic} shows that there exists~$\CT\in P_Q(\CS,\CF_\CS)$ such that
\begin{align}\label{eq:eul < 0}
  \eul_{Q,\CF_\CT,\CF_{\CV/\CS}}(\CT,\CV/\CS)<0.
\end{align}
If $\CT=\CS$, this filtered Euler number equals $\edim_{Q,\CF}(\CS,\CV)$ (\cref{eq:eul is edim}), so~\ref{it:main refined A} is violated.
We will therefore assume that $\CT\subset\CS$.
In this case,
\begin{align*}
  \edim_{Q,\CF}(\CT,\CV)
&= \eul_{Q,\CF_\CT,\CF_{\CV/\CT}}(\CT,\CV/\CT) \\
&= \eul_{Q,\CF_\CT,\CF_{\CV/\CT}}(\CT,\CV/\CT) - \edim_{Q,\CF_\CS}(\CT,\CS) \\
&= \eul_{Q,\CF_\CT,\CF_{\CV/\CT}}(\CT,\CV/\CT) - \eul_{Q,\CF_\CT,\CF_{\CS/\CT}}(\CT,\CS/\CT) \\
&= \eul_{Q,\CF_\CT,\CF_{\CV/\CS}}(\CT,\CV/\CS) < 0,
\end{align*}
where the first equality is \cref{eq:eul is edim},
the second equality holds because~$\CT\in P_Q(\CS,\CF_\CS)$ and so $\edim_{Q,\CF_\CS}(\CT,\CS)=0$ (see discussion below \cref{def:point}),
the next steps are \cref{eq:eul is edim} and \cref{eq:add eul 2nd},
and we finally used \cref{eq:eul < 0}.
Thus, $\edim_{Q,\CF}(\CT,\CV)<0$, which by \cref{lem:necessary} implies that $\CT$ is not intersecting in $\CV$.
This shows that~\ref{it:main refined B} is violated.
\end{proof}

\begin{remark}\label{rem:opt test 2}
One can in specific cases further constrain the families~$\CT$ that need to be considered in condition~\ref{it:main refined B} of \cref{thm:main refined} by careful inspection of the maximin construction and using \cref{rem:opt test}.
For example, we may always restrict to~$\CT$ that satisfy $\dim T_x=\dim T_y$ for every arrow~$a\colon x\to y\in Q_1$ such that $\dim S_x=\dim S_y$ and $\dim V_x=\dim V_y$.

To see this, recall that the subspaces~$\CT$ were produced by applying \cref{cor:diabolic} to $\CS$ and $\CV/\CS$.
In the proof of \cref{cor:diabolic}, we first invoked \cref{thm:ext via eul} to obtain an element $\CT\subseteq_Q\CS$ with $\eul_{Q,\CF_\CT,\CF_{\CV/\CS}}(\CT,\CV/\CS)<0$ and then used the maximin construction of \cref{prp:unique maximin} to find an element in~$P_Q(\CS,\CF_\CS)$ with negative Euler number.
Since $\dim V_x/S_x = \dim V_y/V_y$, we may by \cref{rem:opt test} assume that $\dim T_x = \dim T_y$ for each arrow as above.
We would like to restrict the maximin construction to the subset~$\BS'\subseteq\BS(v^*)$ consisting of families that satisfy this dimension condition.
For \emph{generic}~$v^*$ that detect $Q$-intersection in~$\CS$, $\BS'$ is closed under vector space sum and intersection, as follows by a similar argument as given in \cref{rem:opt test}.
Thus, the same proofs as given above allow us to conclude that there exists a unique maximin subrepresentation~$\CT^*$ (with possibly different $\sigma^*<0$ and $d^*>0$) which is an element of~$P_Q(\CS,\CF_\CS)$ and moreover satisfies~$\dim T^*_x = \dim T^*_y$ for each arrow as above.

In the case of the Horn quiver, this optimization recovers Belkale's conditions for intersections of Schubert classes of the Grassmannian (\cref{sec:horn}).
\end{remark}

By the same reasoning, but working with families of subspaces without filtrations, one can prove a refined version of Schofield's theorem~\cite{MR1162487}.
To state the result, write $\boldalpha\leq_Q\boldn$ if $\boldalpha$ is a Schofield subdimension vector of~$\boldn$, and define $P_Q(\boldalpha)$ as the set of subdimension vectors $\boldbeta\leq\boldalpha$ such that $\Gr_Q(\boldbeta,\boldalpha)_v$ is a point for generic $v\in\CH_Q(\boldalpha)$.

\begin{theorem}\label{thm:schofield refined}
Let $\boldalpha$ be a subdimension vector of some dimension vector $\boldn$.
Then, $\boldalpha\leq_Q\boldn$ if and only if
\begin{enumerate}[label=\emph{(\Alph*)},ref={(\Alph*)}]
\item $\braket{\boldalpha,\boldn-\boldalpha} \geq 0$,
\item $\boldbeta\le_Q\boldn$ for every $\boldbeta\in P_Q(\boldalpha)$, $\boldbeta\neq\boldalpha$.
\end{enumerate}
\end{theorem}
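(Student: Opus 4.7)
The plan is to mirror the proof of Theorem~\ref{thm:main refined} in the unfiltered setting, in which $\edim$ and $\eul$ both collapse to the Euler form $\braket{\cdot,\cdot}$, the Borel subgroup is replaced by all of $\GL_Q(\CV)$, and Schubert varieties degenerate to the ambient Grassmannians. Each step then becomes strictly simpler than its filtered counterpart.

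For necessity, condition~(A) follows from Proposition~\ref{prp:dim generic GrQv}: if $\mathbold\alpha\leq_Q\mathbold n$, then the generic quiver Grassmannian is nonempty and each of its irreducible components has dimension $\braket{\mathbold\alpha,\mathbold n-\mathbold\alpha}\geq 0$. Condition~(B) is the transitivity of $\leq_Q$, the unfiltered analog of Lemma~\ref{lem:transitive}: given any $v\in\CH_Q(\mathbold n)$, we first find a subrepresentation of dimension $\mathbold\alpha$ and then, applying $\mathbold\beta\leq_Q\mathbold\alpha$ to its restriction, a further subrepresentation of dimension $\mathbold\beta$.

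For sufficiency I would first assemble three unfiltered analogs of our main technical tools. Schofield's original theorem~\cite{MR1162487}, the trivial-filtration case of Theorem~\ref{thm:Q-intersecting iff ext=0}, asserts that $\mathbold\alpha\leq_Q\mathbold n$ iff $\ext_Q(\mathbold\alpha,\mathbold n-\mathbold\alpha)=0$. Theorem~\ref{thm:ext via eul} specializes to
\begin{align*}
  \ext_Q(\mathbold n,\mathbold m) = -\min_{\mathbold\alpha\leq_Q\mathbold n} \braket{\mathbold\alpha,\mathbold m},
\end{align*}
and Propositions~\ref{prp:unique maximin} and~\ref{prp:maximin point} deliver the unfiltered \emph{diabolic corollary}: if $\ext_Q(\mathbold n,\mathbold m)>0$, there exists $\mathbold\beta\in P_Q(\mathbold n)$ with $\braket{\mathbold\beta,\mathbold m}<0$. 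All three proofs carry over verbatim by deleting the filtrations and Borel subgroups.

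Granted these, suppose $\mathbold\alpha\not\leq_Q\mathbold n$. By Schofield's theorem, $\ext_Q(\mathbold\alpha,\mathbold n-\mathbold\alpha)>0$, and the diabolic corollary applied to $(\mathbold\alpha,\mathbold n-\mathbold\alpha)$ produces $\mathbold\beta\in P_Q(\mathbold\alpha)$ with $\braket{\mathbold\beta,\mathbold n-\mathbold\alpha}<0$. If $\mathbold\beta=\mathbold\alpha$, this directly violates~(A). Otherwise $\mathbold\beta\neq\mathbold\alpha$, and $\mathbold\beta\in P_Q(\mathbold\alpha)$ together with Proposition~\ref{prp:dim generic GrQv} applied inside $\mathbold\alpha$ yields $\braket{\mathbold\beta,\mathbold\alpha-\mathbold\beta}=0$; bilinearity of the Euler form then gives
\begin{align*}
  \braket{\mathbold\beta,\mathbold n-\mathbold\beta}
  = \braket{\mathbold\beta,\mathbold n-\mathbold\alpha} + \braket{\mathbold\beta,\mathbold\alpha-\mathbold\beta}
  = \braket{\mathbold\beta,\mathbold n-\mathbold\alpha} < 0,
\end{align*}
which by the already-established necessity of~(A) forces $\mathbold\beta\not\leq_Q\mathbold n$, so (B) fails as well.

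The main obstacle will be verifying that the uniqueness argument of Proposition~\ref{prp:unique maximin} transfers cleanly. In the filtered setting the contradiction hinges on a strict comparison of filtered $\Hom$-spaces; without filtrations, $\dim\g_Q(\CV,\CW)=\sum_x\alpha_x\beta_x$ depends only on dimension vectors, so the quotients $(\CS_1+\CS_2)/\CS_2$ and $\CS_1/(\CS_1\cap\CS_2)$, being isomorphic as plain vector spaces, automatically carry equal slopes. The contradiction then falls out even more directly from the strict slope inequality $\sigma((\CS_1+\CS_2)/\CS_2)>\sigma(\CS_1/(\CS_1\cap\CS_2))$ produced by extremality, and the rest of the maximin machinery is unaffected.
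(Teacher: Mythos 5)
Your proposal is correct and takes essentially the same approach as the paper, which explicitly states that Theorem~\ref{thm:schofield refined} is obtained ``by the same reasoning, but working with families of subspaces without filtrations,'' i.e., by specializing the proof of \cref{thm:main refined} (via \cref{thm:Q-intersecting iff ext=0}, \cref{thm:ext via eul}, \cref{prp:unique maximin}, \cref{prp:maximin point}, and \cref{cor:diabolic}) to trivial filtrations. Your observation that the uniqueness argument in \cref{prp:unique maximin} actually becomes \emph{tighter} in the unfiltered case---because the two quotients have identical slopes for dimension reasons alone, so the strict slope inequality is an immediate contradiction---is a correct and worthwhile clarification that the paper leaves implicit.
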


\section{Horn conditions for Q-intersection}\label{sec:horn}
\Cref{thm:main refined} can readily be translated into a recursive algorithm for deciding $Q$-intersection that only involves the easily computable expected dimensions (\cref{def:edim}).

\begin{definition}[Horn set]\label{def:horn set}
Let $(\CV,\CF)$ be a filtered dimension vector.
We define $\Horn_Q(\CV,\CF)$ inductively as the set of~$\CS\subseteq\CV$ such that, if~$\CS\neq\CV$,
\begin{enumerate}[label=\emph{(\Alph*)},ref={(\Alph*)}]
\item $\edim_{Q,\CF}(\CS,\CV) \geq 0$,
\item\label{it:horn set B} $\edim_{Q,\CF}(\CT,\CV) \geq 0$ for every $\CT\in\Horn_Q(\CS,\CF_\CS)$ such that $\CT\neq\CS$ and~$\edim_{Q,\CF_\CS}(\CT,\CS)=0$.
\end{enumerate}
\end{definition}

\begin{theorem}[Horn conditions]\label{thm:horn}
Let $(\CV,\CF)$ be a filtered dimension vector and $\CS\subseteq\CV$ a family of subspaces.
Then, $\CS\subseteq_Q\CV$ if and only if $\CS\in\Horn_Q(\CV,\CF)$.
\end{theorem}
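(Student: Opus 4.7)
The plan is to proceed by induction on the total dimension $d(\CV)$. The base case $d(\CV)=0$ is trivial, as is the case $\CS=\CV$ at any step of the induction. I may therefore assume $\CS\subsetneq\CV$, in which case $d(\CS)<d(\CV)$ and the inductive hypothesis applied to $(\CS,\CF_\CS)$ identifies $\Horn_Q(\CS,\CF_\CS)$ with the set $\{\CT\subseteq\CS : \CT\subseteq_Q\CS\}$.

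The forward direction ($\CS\subseteq_Q\CV \Rightarrow \CS\in\Horn_Q(\CV,\CF)$) is then immediate. Condition~(A) of \cref{def:horn set} is just \cref{lem:necessary}. For condition~(B), given $\CT\in\Horn_Q(\CS,\CF_\CS)$ with $\CT\neq\CS$ and $\edim_{Q,\CF_\CS}(\CT,\CS)=0$, the inductive hypothesis gives $\CT\subseteq_Q\CS$, \cref{lem:transitive} upgrades this to $\CT\subseteq_Q\CV$, and \cref{lem:necessary} then yields $\edim_{Q,\CF}(\CT,\CV)\geq 0$.

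The reverse direction is the substantive one. The Horn condition only guarantees $\edim_{Q,\CF}(\CT,\CV)\geq 0$ for certain $\CT$, which is a priori too weak to invoke \cref{thm:main refined} directly, since that theorem would demand the stronger conclusion $\CT\subset_Q\CV$. The plan is therefore to argue by contradiction using the finer tool that underlies \cref{thm:main refined}. Suppose $\CS\in\Horn_Q(\CV,\CF)$ but $\CS\not\subseteq_Q\CV$. By \cref{thm:Q-intersecting iff ext=0}, $\ext_{Q,\CF_\CS,\CF_{\CV/\CS}}(\CS,\CV/\CS)>0$, and \cref{cor:diabolic} (applied with $\CW=\CV/\CS$ and $\CG=\CF_{\CV/\CS}$) produces a family $\CT\in P_Q(\CS,\CF_\CS)$ such that $\eul_{Q,\CF_\CT,\CF_{\CV/\CS}}(\CT,\CV/\CS)<0$. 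If $\CT=\CS$, this Euler number equals $\edim_{Q,\CF}(\CS,\CV)$ by \cref{eq:eul is edim}, already contradicting condition~(A).

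So assume $\CT\subsetneq\CS$. Here the engine of the argument is an additivity identity: applying \cref{eq:add eul 2nd} to the inclusion $\CS/\CT\subseteq\CV/\CT$ and using \cref{eq:eul is edim} yields
\[
  \edim_{Q,\CF}(\CT,\CV) = \edim_{Q,\CF_\CS}(\CT,\CS) + \eul_{Q,\CF_\CT,\CF_{\CV/\CS}}(\CT,\CV/\CS).
\]
Since $\CT\in P_Q(\CS,\CF_\CS)$ forces $\edim_{Q,\CF_\CS}(\CT,\CS)=0$ and the second summand is strictly negative, the left-hand side is strictly negative. On the other hand, $\CT\subseteq_Q\CS$ gives $\CT\in\Horn_Q(\CS,\CF_\CS)$ by the already-established forward direction of the inductive hypothesis, so condition~(B) of \cref{def:horn set} applied to $\CT$ forces $\edim_{Q,\CF}(\CT,\CV)\geq 0$, the desired contradiction. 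The main conceptual obstacle to anticipate is exactly this gap between ``$\edim\geq 0$'' and ``$\subseteq_Q\CV$'', which is what forces the detour through \cref{cor:diabolic} rather than a direct appeal to \cref{thm:main refined}.
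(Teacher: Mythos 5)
Your proof is correct. The forward direction (\(\CS\subseteq_Q\CV\Rightarrow\CS\in\Horn_Q(\CV,\CF)\)) matches the paper exactly: \cref{lem:necessary} for condition~(A), and \cref{lem:transitive} plus \cref{lem:necessary} for condition~(B), with the inductive hypothesis supplying membership in \(\Horn_Q(\CS,\CF_\CS)\).

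For the reverse direction the paper simply states that it ``follows from \cref{thm:main refined},'' whereas you replay the engine of that theorem's proof (\cref{cor:diabolic} together with the additivity identity from \cref{eq:add eul 2nd} and \cref{eq:eul is edim}) rather than applying it as a black box. This is a genuinely more careful route, and your diagnosis of why is on point: to invoke \cref{thm:main refined} directly one would need, for every \(\CT\in P_Q(\CS,\CF_\CS)\setminus\{\CS\}\), the conclusion \(\CT\subset_Q\CV\) (equivalently \(\CT\in\Horn_Q(\CV,\CF)\)), but the Horn condition on \(\CS\) only yields \(\edim_{Q,\CF}(\CT,\CV)\geq0\), and verifying the nested condition~(B) for \(\CT\) at the level of \(\CV\) is not immediate (the required equality \(\edim_{Q,\CF_\CS}(\CL,\CS)=0\) for subfamilies \(\CL\subset\CT\) does not obviously follow). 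Your contradiction argument sidesteps this by extracting a single witness \(\CT\in P_Q(\CS,\CF_\CS)\) with \(\edim_{Q,\CF}(\CT,\CV)<0\) directly from \cref{cor:diabolic}, which violates condition~(A) or~(B) of the Horn set definition. The only cosmetic difference is the induction parameter: you induct on \(d(\CV)\) and the paper on \(d(\CS)\); both set-ups suffice for the pieces of the inductive hypothesis you actually use (namely, the identification of \(\Horn_Q(\CS,\CF_\CS)\) with \(\{\CT\subseteq\CS:\CT\subseteq_Q\CS\}\)).
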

\begin{proof}
This follows by induction over the total dimension of~$\CS$.
Indeed, suppose that we have proved the result for any~$\CT\subset\CS$.
Then the `if` follows from \cref{thm:main refined}, while the `only if' is a consequence of \cref{lem:necessary,lem:transitive}.
\end{proof}


It is clear that in condition~\ref{it:horn set B} of \cref{def:horn set} we only need to consider subspaces~$\CT$ that belong to $P_Q(\CS,\CF)$.
However, it is much harder to check membership in~$P_Q(\CS,\CF_\CS)$ (i.e., whether the generic intersection variety is a point) than to compute the expected dimension and check that $\edim_{Q,\CF_\CS}(\CT,\CS)=0$ (i.e., whether the generic intersection variety is a finite set of points).

\subsection{Combinatorial Horn conditions}\label{subsec:combi horn}
Since the property of being $Q$-in\-ter\-sec\-ting only depends on the Schubert cell, we can also give a combinatorial version of the above characterization.
We will work in the following setup:
For every finite subset~$J = \{i_1 < \dots < i_\ell\} \subseteq\N$, define the vector space~$V(J) = \bigoplus_{j\in J} \C e_j$ and the filtration~$F(J)$ with elements~$F(J)(a) = \bigoplus_{k=1}^a \C e_{j_k}$ for $a=1,\dots,\ell$.
Thus, every collection~$\CJ=(J_x)_{x\in Q_0}$ of finite subsets $J_x\subseteq\N$ defines a family of vector spaces $\CV(\CJ)$ and a family of filtrations $\CF(\CJ)$, i.e., a filtered dimension vector.

We will write $\CK\subseteq\CJ$ if $\CK=(K_x)_{x\in Q_0}$ is a family of subsets $K_x\subseteq J_x$ for every~$x\in Q_0$.
In this case, $\CV(\CK)$ is a family of subspaces of~$\CV(\CJ)$.
As discussed on \cpageref{eq:concrete schubert}, every Schubert cell in a Grassmannian of~$\CV(\CJ)$ is the Borel orbit of some family of the form~$\CV(\CK)$.
Let us also write~$\boldOmega(\CK)$ for the corresponding Schubert variety defined by $\CV(\CK)$.

We write~$\CK\subseteq_Q\CJ$ if $\CV(\CK)$ is \emph{$Q$-intersecting} in~$\CV(\CJ)$, and we abbreviate the \emph{expected dimension} by $\edim_Q(\CK,\CJ) = \edim_{Q,\CF(\CJ)}(\CV(\CK),\CV(\CJ))$.
Using \cref{eq:schubert dim}, the expected dimension can be computed as follows:
\begin{equation}\label{eq:edim concrete}
\begin{aligned}
  \edim_Q(\CK,\CJ)
&= \sum_{x\in Q_0} \sum_{j\in K_x} \bigl( p_j(J_x) - p_j(K_x) \bigr) \\
&- \!\!\!\!\!\sum_{a:x\to y\in Q_1}\!\!\!\!\! |K_x| \bigl( |J_y| - |K_y| \bigr),
\end{aligned}
\end{equation}
where we write $p_x(S)$ for the position of an element~$x$ in a set~$S$ in increasing order, i.e., $p_x(S)=1$ for the smallest element $x\in S$, etc.
We obtain a simple practical criterion for deciding $Q$-in\-ter\-sec\-tion:

\begin{definition}[Combinatorial Horn set]\label{def:combinatorial horn set}
Let $\CJ=(J_x)_{x\in Q_0}$ be a family of finite subsets of $\N$.
We define $\Horn_Q(\CJ)$ as the set of $\CK\subseteq\CJ$ such that, if~$\CK\neq\CJ$,
\begin{enumerate}[label=\emph{(\Alph*)},ref={(\Alph*)}]
\item $\edim_Q(\CK,\CJ)\geq0$,
\item $\edim_Q(\CL,\CJ)\geq0$ for every $\CL\in\Horn_Q(\CK)$ that satisfies $\CL\neq\CK$ and $\edim_Q(\CL,\CK)=0$.
\end{enumerate}
\end{definition}

\begin{theorem}[Combinatorial Horn conditions]\label{thm:horn concrete}
Let $\CJ=(J_x)_{x\in Q_0}$ be a family of finite subsets of~$\N$ and $\CK\subseteq\CJ$ a family of subsets.
Then, $\CK\subseteq_Q\CJ$ if and only if~$\CK\in\Horn_Q(\CJ)$.
Moreover, if $\CK\subseteq_Q\CJ$ then the generic intersection variety is of dimension $\edim_Q(\CK,\CJ)$.
\end{theorem}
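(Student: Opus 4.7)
The plan is to deduce the combinatorial criterion from the abstract \cref{thm:horn} by translating \cref{def:combinatorial horn set} into \cref{def:horn set}. The starting observation is that for $\CK \subseteq \CJ$ the filtration $\CF(\CJ)$ restricts to a filtration on $\CV(\CK)$ whose underlying flag coincides with that of $\CF(\CK)$; the two filtrations differ only in how they are indexed. Hence they induce the same Borel subgroup of $\GL(V(K_x))$, so the Schubert cells, Schubert varieties, expected dimensions, and Horn sets (which are all Borel-invariant notions) for $\CV(\CK)$ are unchanged whether we use $\CF(\CK)$ or $\CF(\CJ)_{\CV(\CK)}$. A short direct computation using \cref{eq:schubert dim} and \cref{def:edim} then confirms that the combinatorial expression in \cref{eq:edim concrete} equals $\edim_{Q,\CF(\CJ)}(\CV(\CK),\CV(\CJ))$.

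The heart of the proof is an induction on $\sum_{x\in Q_0}|J_x|$ establishing that $\CK\in\Horn_Q(\CJ)$ if and only if $\CV(\CK)\in\Horn_Q(\CV(\CJ),\CF(\CJ))$. The base case $\CJ = \emptyset$ and the case $\CK = \CJ$ are trivial. Otherwise, condition~(A) matches between the two definitions by the paragraph above. For condition~(B), the abstract Horn set $\Horn_Q(\CV(\CK),\CF(\CK))$ is a union of Schubert cells in the Grassmannians of $\CV(\CK)$---since every ingredient of \cref{def:horn set} is Borel-invariant---and every such Schubert cell has a unique representative of the form $\CV(\CL)$ for some $\CL \subseteq \CK$. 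The inductive hypothesis applied to $(\CK, \CF(\CK))$ then identifies the subspaces $\CV(\CL)$ appearing in the abstract condition with the $\CL \in \Horn_Q(\CK)$ appearing in the combinatorial condition, and the auxiliary constraints ($\CL \neq \CK$ and $\edim = 0$) carry across verbatim.

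Combining this equivalence with \cref{thm:horn} yields the `if and only if' part. The dimension statement follows from \cref{prp:dim generic Omegav}: once $\mathbold\Omega(\CK)$ is known to be $Q$-intersecting in $\CV(\CJ)$, each irreducible component of the generic intersection variety has dimension
\[
  \dim\mathbold\Omega(\CK) - \dim\CH_Q(\CV(\CK),\CV(\CJ)/\CV(\CK)) = \edim_Q(\CK,\CJ).
\]
I expect the main technical obstacle to be the clean formulation of flag-invariance: one must carefully verify that all ingredients of the Horn set depend only on the Borel subgroup rather than on the specific filtration as a sequence, so that the harmless bookkeeping mismatch between $\CF(\CJ)_{\CV(\CK)}$ and $\CF(\CK)$ really is harmless. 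Once this is in place, the remainder reduces to a direct comparison of the two recursive definitions.
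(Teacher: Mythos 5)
Your proof is correct and takes exactly the route the paper intends: Theorem~\ref{thm:horn concrete} is meant to follow from Theorem~\ref{thm:horn} via the dictionary of \cref{subsec:combi horn}, and the paper treats the translation as immediate without spelling it out. You have made explicit the two points the paper leaves implicit---that $\CF(\CJ)_{\CV(\CK)}$ and $\CF(\CK)$ determine the same Borel subgroup and hence the same Schubert cells, expected dimensions, and Horn sets, and that the recursive definitions \cref{def:horn set} and \cref{def:combinatorial horn set} therefore match up by induction on $\sum_x |J_x|$---which is precisely what a rigorous write-up requires.
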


It is straightforward to incorporate the optimizations discussed in \cref{rem:opt test,rem:opt test 2} into this criterion.
Given a family~$\CJ$ that satisfies~$|J_x|=|J_y|$ for every arrow $x\to y$ in some subset $A\subseteq Q_1$, define $\Horn_{Q,A}(\CJ)$ inductively as the set of $\CK\subseteq\CJ$ satisfying the same dimension condition (i.e., $|K_x| = |K_y|$ for every arrow $x\to y\in A$) and, if $\CK\neq\CJ$,
\begin{enumerate}[label=(\Alph*)]
\item $\edim_Q(\CK,\CJ)\geq0$,
\item $\edim_Q(\CL,\CJ)\geq0$ for every $\CL\in\Horn_{Q,A}(\CK)$ with $\CL\neq\CK$ and $\edim_Q(\CL,\CK)=0$.
\end{enumerate}
Then, the elements of $\Horn_{Q,A}(\CJ)$ are precisely the $Q$-intersecting subfamilies of~$\CJ$ that satisfy the dimension condition.

We now specialize our result to the Horn quiver~$H_s$ from~\eqref{eq:horn quiver} and constant dimension vectors (corresponding to the choice where~$A$ contains all arrows of $H_s$).
Thus, let~$\CJ$ denote a family of~$s+1$ subsets of $\N$, each of cardinality~$n$, and $\CK\subseteq\CJ$ a collection of subsets, each of cardinality~$0\leq r\leq n$.
If we identify each $V(J_x) \cong \C^n$, each $V(K_x)$ determines a Schubert variety~$\Omega(K_x)$ in~$\Gr(r,n)$.
As explained in \cref{ex:horn quiver}, the Schubert classes~$[\Omega(K_x)]_{x=1,\dots,s+1}$ are intersecting if and only if~$\CK\subseteq_{H_s}\CJ$.
Thus, we obtain the following necessary and sufficient condition for Schubert varieties in $\Gr(r,n)$ to intersect:

\begin{definition}[Belkale's Horn set]
Let $\CJ$ denote a family of~$s+1$ subsets of $\N$, each of cardinality~$n$, and $1\leq r\leq n$.
We define $\Belkale_s(r,\CJ)$ as the set of $\CK\subseteq\CJ$ such that each $K_x$ has cardinality~$r$ and,
\begin{enumerate}[label=\emph{(\Alph*)},ref={(\Alph*)}]
\item $\edim_{H_s}(\CK,\CJ)\geq0$,
\item $\edim_{H_s}(\CL,\CJ)\geq0$ for every $\CL\in\Belkale_s(d,\CK)$ where $1\leq d<r$ and $\edim_{H_s}(\CL,\CK)=0$.
\end{enumerate}
\end{definition}

Note that for the quiver~$H_s$, $\CJ=(J_x)$ with $J_x=\{1,\dots,n\}$ for all~$x$, and $\CK\subseteq\CJ$ such that each $K_x$ has cardinality~$r$, \cref{eq:edim concrete} simplifies to
\begin{align*}
  \edim_{H_s}(\CK,\CJ) = \left( \sum_{x=1}^{s+1} \sum_{a=1}^r \bigl( K_x(a) - a \bigr) \right) - s r (n-r),
\end{align*}
where $K_x(1) < \dots < K_x(r)$ denote the elements of $K_x$.
This coincides with Belkale's definition of the expected dimension~\cite{MR2177198}.

\begin{theorem}[Belkale]\label{thm:belkale}
Let $1\leq r\leq n$, $\CJ$ a family of~$s+1$ subsets of $\N$, each of cardinality~$n$, and $\CK\subseteq\CJ$ a family of subsets, each of cardinality~$r$.
Then, $\CK\subseteq_{H_s}\CJ$ if and only if $\CK\in\Belkale_s(r,\CJ)$.
\end{theorem}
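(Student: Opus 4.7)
The plan is to deduce \cref{thm:belkale} directly from the combinatorial Horn conditions of \cref{thm:horn concrete} together with the optimization paragraph that follows it. First I would apply \cref{thm:horn concrete} to conclude that $\CK \subseteq_{H_s} \CJ$ if and only if $\CK \in \Horn_{H_s}(\CJ)$. Since every arrow of $H_s$ has the form $x\to s+1$ and $|J_y|=n$ for every vertex~$y$, the hypothesis $|J_x|=|J_y|$ needed for the optimized Horn set $\Horn_{Q,A}$ is satisfied with $A=Q_1$, and its dimension condition $|K_x|=|K_y|$ along every arrow amounts to requiring $|K_x|$ to be constant across all vertices. Consequently, the problem reduces to showing
\begin{equation*}
  \{\CK \in \Horn_{H_s,Q_1}(\CJ) : |K_x| = r \text{ for all } x\} = \Belkale_s(r,\CJ).
\end{equation*}

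Next I would compare the two recursive definitions by induction on~$r$. The base case $r=0$ is immediate. For the inductive step, both definitions impose the same inequality $\edim_{H_s}(\CK,\CJ)\geq 0$ in condition~(A), and the expected-dimension formula~\eqref{eq:edim concrete}, restricted to constant-cardinality subfamilies of~$\CJ$, coincides with Belkale's expression (as already noted in the paper). For condition~(B), $\Horn_{H_s,Q_1}$ tests $\edim_{H_s}(\CL,\CJ)\geq 0$ for every $\CL\in\Horn_{H_s,Q_1}(\CK)$ with $\CL\neq\CK$ and $\edim_{H_s}(\CL,\CK)=0$; its dimension condition forces $d\coloneqq|L_x|$ to be independent of~$x$, with $0\leq d\leq r$. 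The case $d=r$ is ruled out by $\CL\neq\CK$, since $L_x\subseteq K_x$ with $|L_x|=|K_x|$ forces $L_x=K_x$ at every vertex. The case $d=0$ makes $\edim_{H_s}(\CL,\CJ)=0$ automatically via~\eqref{eq:edim concrete}, so it contributes no nontrivial test. For $1\leq d<r$, the inductive hypothesis identifies the relevant $\CL\in\Horn_{H_s,Q_1}(\CK)$ with the elements of $\Belkale_s(d,\CK)$, and the required inequality $\edim_{H_s}(\CL,\CJ)\geq 0$ is precisely the one appearing in Belkale's condition~(B).

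I do not anticipate any serious obstacle: the proof is essentially a bookkeeping exercise matching two inductive definitions, with the substance lying in the general combinatorial Horn theorem and its optimization, which for the Horn quiver pins down all candidate subfamilies to have constant cardinality. The only mild subtleties are checking that the corner cases $d=0$ and $d=r$ reduce away correctly, and that the expected-dimension formula specializes to Belkale's, both of which follow directly.
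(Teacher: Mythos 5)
Your proposal is correct and follows the paper's own route: \cref{thm:belkale} is obtained by specializing the optimized Horn set~$\Horn_{Q,A}(\CJ)$ (with $A=Q_1$) to the Horn quiver and constant-cardinality families, which forces all~$|K_x|$ to be equal and reduces the recursion in \cref{def:combinatorial horn set} to Belkale's. The induction on~$r$ and the disposal of the corner cases~$d=0,r$ make explicit the bookkeeping that the paper leaves implicit when it states the theorem without a separate proof.
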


In his original proof~\cite{MR2177198}, Belkale constructs an element~$\CT\subset_Q\CV$ with constant $\dim\CT$, by a `cascade construction' of generic kernels (a priori different from the one we used) such that $\CT$ fails to satisfy the Horn conditions if the Schubert classes are not intersecting.
Belkale's proof has been simplified by Sherman~\cite{sherman1}, as explained in~\cite{BVW}.

\subsection{Relation to augmented quivers}\label{subsec:augmented}
We now discuss the relation between our criterion and the construction of Derksen-Weyman in more detail (cf.~\cref{subsec:schofield}).

Consider a quiver~$Q$ and a dimension vector~$\boldn$, and define $\CJ=(J_x)_{x\in Q_0}$ by $J_x=\{1,\dots,n_x\}$.
Inspired by Derksen-Weyman~\cite{MR1758750}, define an \emph{augmented quiver}~$\tilde Q$ in the following way.
For each vertex $x\in Q_0$, introduce additional vertices~$(x,i)$ for $i=1,\dots,n_x-1$, and add arrows
\begin{align*}
  (x,1)\longrightarrow \ldots\longrightarrow (x,n_x-1) \longrightarrow (x,n_x) = x.
\end{align*}
Define the dimension vector~$\boldtilden$ with components $\tilde n_{x,i} = i$.
Note that~$\boldtilden$ coincides with $\boldn$ on~$Q$.
Given a family of subsets~$\CK\subseteq\CJ$, we can similarly associate a subdimension vector~$\boldtildealpha$ by
$\boldtildealpha_{x,i} = \lvert K_x \cap \{1,\dots,i\} \rvert$.

Then the correspondence between our picture and the augmented quiver picture is as follows:
$\CK\subseteq_Q\CJ$ if and only if $\boldtildealpha \leq_{\tilde Q} \boldtilden$, that is, if and only if $\boldtildealpha$ is a Schofield subdimension vector of $\boldtilden$.

Thus, one can also determine if $\CK\subseteq_Q\CJ$ by using Schofield's inductive criterion for subdimension vectors of the augmented quiver~$\tilde Q$.
This is not obviously equivalent to our \cref{thm:main,thm:main refined}, which apply to~$Q$ directly.
Indeed, even using our refinement of Schofield's criterion (\cref{thm:schofield refined}), one would a priori need to test Schofield subdimension vectors in~$P_{\tilde Q}(\boldtildealpha)$, which in general is a much larger set than $P_Q(\CV(\CK),\CF(\CK))$.

As an easy example, consider the quiver~$Q$ with a single arrow, $a\to b$.
For $\CK=(\{1,2\},\{1,2\})$, the set~$P_Q(\CV(\CK),\CF(\CK))$ has 7 elements, namely the following subfamilies of~$\CK$:
\begin{align*}
(\emptyset, \emptyset), \;
(\emptyset, 1), \;
(\emptyset, 12), \;
(1, 2), \;
(1, 12), \;
(2, 1), \;
(12, 12),
\end{align*}
where we write $12$ instead of $\{1,2\}$ etc.~to improve readability.
In contrast, for the extended quiver $(a,1) \to (a,2)\to(b,2)\leftarrow(b,1)$ and the dimension vector~$\boldtildealpha=(1,2,2,1)$ corresponding to~$\CK$, there are 12 Schofield subdimension vectors in~$P_{\tilde Q}(\boldtildealpha)$:
\begin{align*}
&(0,0,0,0),&
&(0,0,1,1),&
&(0,0,2,0),&
&(0,0,2,1),& \\
&(0,1,1,1),&
&(0,2,2,0),&
&(0,2,2,1),&
&(1,1,1,0),& \\
&(1,1,2,0),&
&(1,1,2,1),&
&(1,2,2,0),&
&(1,2,2,1).&
\end{align*}
Indeed, while every $\CL\in P_Q(\CV(\CK),\CF(\CK))$ produces an element $\boldtildebeta \in P_{\tilde Q}(\boldtildealpha)$ by $\tilde\beta_{x,i} = |L_x \cap \{1,\dots,i\}|$, it is clear that only elements with
\begin{align}\label{eq:beta jump}
  \tilde\beta_{x,i} \leq \tilde\beta_{x,i+1} \leq \tilde\beta_{x,i} + 1
\end{align}
can arise in this way.

While \cref{thm:main refined} is not a consequence of Schofield's theorem, it is possible to give an alternative proof using the augmented quiver construction, staying purely in the realm of ordinary dimension vectors.
Indeed, using similar arguments as in \cref{rem:opt test,rem:opt test 2} one can prove a refined version of Schofield's theorem (or \cref{thm:schofield refined}) for dimension vectors of the form~$\boldtildealpha$ and~$\boldtilden$, stating that in order to determine whether $\boldtildealpha \leq_{\tilde Q} \boldtilden$, it suffices to consider~$\boldtildebeta \in P_{\tilde Q}(\boldtildealpha)$ that satisfy \cref{eq:beta jump} and hence arise from some family~$\CL\in P_Q(\CV(\CK),\CF(\CK))$.

\section{Applications to Representation Theory}\label{sec:rep theo}
In this section, we recall that the $Q$-intersecting Schubert varieties determine a complete set of inequalities characterizing the cone~$C_Q(\CV)$ generated by the highest weights of irreducible $\GL_Q(\CV)$-representations that appear in the space of polynomial functions on~$\CH_Q(\CV)$, as mentioned previously in \cref{subsec:intro rep theo}.
Applying an argument of Ressayre, we also show that the semigroup of highest weights is saturated.
Together, we obtain \cref{thm:moment cone and rep theory summary} as announced in the introduction.

We largely follow the notation of \cref{subsec:combi horn}.
Consider a quiver~$Q$ and a dimension vector~$\boldn$, and define $\CJ=(J_x)_{x\in Q_0}$ by $J_x=\{1,\dots,n_x\}$.
Let~$\CV=\CV(\CJ)$.
It is easy to see that, if the quiver~$Q$ has no cycles, then the action of $\GL_Q(\CV)$ on the space $\Sym^*(\CH_Q(\CV)$ of polynomial functions on~$\CH_Q(\CV)$ decomposes with finite multiplicities.
A basis for the Cartan subalgebra of~$\gl(V_x)$ is given by the diagonal matrices~$h_{x,i}$ for~$i=1,\dots,n_x$ such that~$h_{x,i} e_j = \delta_{i,j} e_j$ for $j=1,\dots,n_x$.
Consider~$z_x = \sum_{i=1}^{n_x} h_{x,i}$.
Then, $z=(z_x)_{x\in Q_0}$ is in~$\z=\bigoplus_{x\in Q_0} \R z_x$, the center of~$\gl_Q(\CV)$, and acts by zero in the infinitesimal action of~$\gl_Q(\CV)$ on $\CH_Q(\CK)$.
We label the dominant weights for~$\GL_Q(\CV)$ by a collection~$\boldlambda=(\lambda_x)_{x\in Q_0}$, where each~$\lambda_x$ is a function~$\{1,\ldots,n_x\}\to\ZZ$ such that~$\lambda_x(i)\geq \lambda_x(j)$ for all $1\leq i\leq j\leq n_x$.
Let~$V_{\boldlambda}$ denote the irreducible representation of $\GL_Q(\CV)$ with highest weight~$\boldlambda$.
We decompose:
\begin{align*}
  \Sym^*(\CH_Q(\CV)) = \bigoplus_{\boldlambda} m(\boldlambda) V_{\boldlambda}.
\end{align*}
Note that $V_{\boldlambda}$ occurs with nonzero multiplicity (i.e., $m(\boldlambda)>0$) if and only if there exists a nonzero homogeneous polynomial~$P$ on $\CH_Q(\CV)$ which is semi-invariant by $B_Q(\CV,\CF)$ with weight~$\boldlambda$.
The cone~$C_Q(\CV)$ is, by definition, the cone generated by the dominant weights~$\boldlambda$ such that $m(\boldlambda)>0$.
As discussed in \cref{subsec:intro rep theo}, results of Guillemin-Sternberg~\cites{GS1982qr,GS1982convex} and Mumford~\cite{NessMumford84}*{Appendix} identify the cone~$C_Q(\CV)$ with the image of a moment map modulo the coadjoint action.

The following result can be proved in more general situations by using Ressayre's dominant pairs~\cite{ressayre2010geometric} (see also~\cite{VW}).
We give a short proof in our setting.

\begin{proposition}\label{prp:cone}
Let $\CJ=(J_x)_{x\in Q_0}$, where $J_x=\{1,\dots,n_x\}$, and $\CV=\CV(\CJ)$.
Let $\boldlambda$ such that $V_{\boldlambda}$ occurs with nonzero multiplicity in~$\Sym^*(\CH_Q(\CV))$.
Then,
\begin{align}\label{eq:lambda eq}
  \sum_{x\in Q_0} \sum_{j=1}^{n_x} \lambda_x(j) = 0,
\end{align}
and for every $Q$-intersecting family of subsets~$\CK\subseteq_Q\CJ$ we have that
\begin{align}\label{eq:lambda ieq}
  \sum_{x\in Q_0} \sum_{k \in K_x} \lambda_x(k) \leq 0.
\end{align}
\end{proposition}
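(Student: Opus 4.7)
The plan is to prove the two claims separately. For the equality~\eqref{eq:lambda eq}, I will invoke a fact recorded in the preamble to the proposition: the central element~$z=(z_x)_{x\in Q_0}$ acts by zero on~$\CH_Q(\CV)$, and therefore trivially on~$\Sym^*(\CH_Q(\CV))$ and in particular on~$P$. On the other hand, by semi-invariance of~$P$ with weight~$\mathbold\lambda$, the infinitesimal action of~$z$ on~$P$ is multiplication by $\mathbold\lambda(z)=\sum_{x\in Q_0}\sum_{j=1}^{n_x}\lambda_x(j)$. Comparing the two expressions yields the identity.

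For the inequality~\eqref{eq:lambda ieq}, the first task is to produce a single representation~$v$ that simultaneously avoids the vanishing locus of~$P$ and admits $\CS=\CV(\CK)$ as a subrepresentation. By \cref{lem:generic}, the set of~$v$ for which $\mathbold\Omega^0(\CS,\CF(\CJ))$ contains a subrepresentation of~$v$ is a nonempty Zariski-open subset of~$\CH_Q(\CV)$; since $P$ is a nonzero semi-invariant, $\{P\neq 0\}$ is also Zariski-open, nonempty, and crucially invariant under~$B_Q(\CV,\CF(\CJ))$. Intersecting the two open sets and then translating by a suitable Borel element (which moves a subrepresentation in~$\mathbold\Omega^0(\CS,\CF(\CJ))$ back to~$\CS$ itself), I will obtain~$v$ with $P(v)\neq 0$ and $v\CS\subseteq\CS$ simultaneously. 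This combination is the most delicate step of the argument; it works precisely because $\{P\neq 0\}$ is $B$-invariant.

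The main computation is then a one-parameter subgroup argument. I will take the cocharacter~$\gamma\colon\C^*\to\GL_Q(\CV)$ defined by $\gamma(t)e_k=t\,e_k$ for~$k\in K_x$ and $\gamma(t)e_j=e_j$ for~$j\in J_x\setminus K_x$. Its infinitesimal generator $X=\sum_{x\in Q_0}\sum_{k\in K_x}h_{x,k}$ satisfies $\mathbold\lambda(X)=\sum_{x\in Q_0}\sum_{k\in K_x}\lambda_x(k)$, which is the quantity appearing in~\eqref{eq:lambda ieq}. Under $\gamma(t)$, the $(i,j)$ matrix entry of~$v_a$ for an arrow~$a\colon x\to y$ transforms with weight $\mathbf 1_{K_y}(i)-\mathbf 1_{K_x}(j)$. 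The stability condition $v\CS\subseteq\CS$ kills every entry with $j\in K_x$ and $i\notin K_y$, so all remaining entries transform with nonnegative weight, and $v_0\coloneqq\lim_{t\to 0}\gamma(t)\cdot v$ exists in~$\CH_Q(\CV)$. On the other hand, semi-invariance of~$P$ yields the identity $P(\gamma(t)\cdot v)=t^{-\mathbold\lambda(X)}P(v)$ for all $t\in\C^*$. Since $P(v)\neq 0$ and the limit as~$t\to 0$ must be finite, we are forced to have $\mathbold\lambda(X)\leq 0$, which is precisely~\eqref{eq:lambda ieq}.
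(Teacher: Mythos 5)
Your proof is correct and follows essentially the same approach as the paper: derive the equality from the trivial action of the center, produce a $v$ with $P(v)\neq 0$ stabilizing~$\CS$ by exploiting $B$-semi-invariance of~$P$ together with density of the $B$-orbit of~$\mathbb X(\CS)$, and then run a one-parameter subgroup argument using the cocharacter with generator $\sum_{x\in Q_0}\sum_{k\in K_x}h_{x,k}$ (your $t\to 0$ in $\gamma(t)$ is the paper's $t\to\infty$ in $\exp(-tH)$, and the sign bookkeeping matches). The only superficial difference is that you spell out the Borel translation step and explicitly invoke \cref{lem:generic}, where the paper compresses this into the observation that the $B_Q(\CV,\CF)$-orbit of $\mathbb X(\CS)$ is dense.
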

\begin{proof}
The first claim follows immediately from the fact that the element~$z\in\gl_Q(\CV)$ acts trivially on~$\Sym^*(\CH_Q(\CV))$.

For the second claim, let $\CK$ be a $Q$-intersecting family of subsets as above and let~$P$ be an arbitrary nonzero homogeneous polynomial that is semi-invariant by $B_Q(\CV,\CF)$ with weight~$\boldlambda$.
Let $\CS=\CV(\CK)$ and $\CT=\CV(\CK^c)$, where each~$(\CK^c)_x = K_x^c$, the complement of~$K_x$ in~$J_x=\{1,\dots,n_x\}$.
Consider the vector space from \cref{eq:fiber X(S)}:
\begin{align*}
  \mathbb X(\CS) = \{ v \in \CH_Q(\CV) : v \CS \subseteq \CS \}
\end{align*}
Since $\CS$ is $Q$-in\-ter\-sec\-ting, the $B_Q(\CV,\CF)$-orbit of $\mathbb X(\CS)$ is dense in $\CH_Q(\CV)$ (\cref{lem:generic}).
Thus, since~$P$ is nonzero and semi-invariant by~$B_Q(\CV,\CF)$, there must exist~$v\in\mathbb X(\CS)$ such that $P(v)\neq0$.
As an element of $\mathbb X(\CS)$, it is necessarily of the form
\begin{align*}
  v = \begin{pmatrix}v_{00} & v_{01} \\ 0 & v_{11}\end{pmatrix},
\end{align*}
where $v_{00} \in \CH_Q(\CS)$, $v_{01} \in \CH_Q(\CT, \CS)$, and $v_{11} \in \CH_Q(\CT)$.
Now consider the element $H=(H_x)_{x\in Q_0}$ in the Cartan subalgebra of~$\gl_Q(\CV)$ defined by~$H_x = \sum_{j\in K_x} h_{x,j}$ for $x\in Q_0$.
That is, each $H_x$ is of the form
\begin{align*}
  H_x = \begin{pmatrix} I & 0 \\ 0 & 0 \end{pmatrix}
\end{align*}
with respect to the direct sum~$V_x = S_x \oplus T_x$.
The orbit of~$v$ by the natural action of the one-parameter subgroup~$\exp(-tH)$ of~$\GL_Q(\CV)$ is given by~$v_t = (\exp(-t H_y) v_a \exp(t H_x))_{a\colon x\to y\in Q_1}$.
Thus,
\begin{align*}
  \lim_{t\to\infty} v_t
= \begin{pmatrix}v_{00} & 0 \\ 0 & v_{11}\end{pmatrix}.
\end{align*}
On the other hand, $P$ has weight~$\boldlambda$, so
\begin{align*}
  P(v_t) = e^{\braket{\boldlambda,H} t} P(v).
\end{align*}
We conclude that $\braket{\boldlambda,H}\leq0$, for otherwise the limit would not exist.
This inequality is exactly \cref{eq:lambda ieq}.
\end{proof}

Conversely, geometric invariant theory~\cite{ressayre2011geometric} implies that if $\boldlambda$ satisfies the conditions in \cref{eq:lambda ieq,eq:lambda eq} then it is an element of $C_Q(\CV)$ (see also~\cite{VW}).
Equivalently, in this case there exists a positive integer~$N\geq1$ such that $m(N\boldlambda)>0$.

In fact, we can choose~$N=1$, meaning that the semigroup of highest weights is saturated. 
For the Horn quiver, this was proved first by Knutson-Tao~\cite{MR1671451} and then by Derksen-Weyman~\cite{MR1758750}.
A geometric proof was given by Belkale~\cite{MR2177198} (see also~\cite{BVW}).
We thank Ressayre for explaining to us that, for a general quiver, this also follows from the Derksen-Weyman saturation theorem~\cite{MR1758750}, which asserts that, for a quiver~$Q$ without cycles, the semigroup of weights of semi-invariants is saturated (i.e., whenever there exists a semi-invariant of weight~$N\boldomega$ for some weight~$\boldomega$ and integer $N\geq1$, then there also exists a semi-invariant of weight~$\boldomega$).

Indeed, augment the quiver~$Q$ to a quiver~$\tilde Q$ and consider the family~$\tilde \CV=(\C^{n_{x,i}})$ of vector spaces with dimension vector~$\boldtilden$, as in \cref{subsec:augmented}.
To every family~$\boldtildeomega=(\tilde\omega_{x,i})$ of integers, we can associate a weight~$\boldlambda(\boldtildeomega)=(\lambda_x)$ for~$\GL_Q(\CV)$ by~$\lambda_x(i)=\sum_{j=i}^{n_x}\tilde\omega_{(x,j)}$.
Using the Cauchy formula for the decomposition of $\bigotimes_{i=1}^{n_x-1} \Sym^*(\Hom(\C^i,\C^{i+1}))$ under the action of $\prod_{i=1}^{n_x} GL(i)$, it is easy to see that if there exists a semi-invariant of weight~$\boldtildeomega$ for~$\CH_{\tilde Q}(\tilde\CV)$, then necessarily $\tilde\omega_{x,i}\geq 0$ for~$i=1,\dots,n_x-1$ and every $x\in Q_0$.
Thus, the corresponding~$\boldlambda(\boldtildeomega)$ is a dominant weight.
Conversely, any dominant weight~$\boldlambda$ can be written in this form for some~$\boldtildeomega$.
Furthermore, $\boldlambda(\boldomega)$ is in~$C_Q(\CV)$ if and only if $\boldtildeomega\in\Sigma_{\tilde Q}(\tilde\CV)$.
Consequently, the semigroup of highest weights for $\CH_Q(\CV)$ is saturated, since the semigroup of weights of semi-invariants for~$\CH_{\tilde Q}(\tilde\CV)$ is saturated.
The proof sketched above is similar to the Derksen-Weyman proof of the Horn inequalities~\cite{MR1758750}, which has been further simplified in~\cite{Cra-Bo-Ge}.

Let us discuss which among the inequalities in \cref{eq:lambda ieq} are irredundant.
In a general setting, geometric conditions for irredundancy were given by Ressayre in~\cite{ressayre2010geometric} and, in more detail for the particular case of quivers, in~\cite{MR2875833}.
For~$\CK$ to define an irredundant inequality, it must satisfy two conditions:
\label{page:ressayre conditions}
\begin{enumerate}[label=(R\arabic*)]
\item\label{it:ressayre cond 1} $\CV(\CK)$ belongs to $P_Q(\CV,\CF)$, i.e., the intersection variety~$\boldOmega(\CK)_v$ is generically reduced to a point,
\item\label{it:ressayre cond 2} $\dim C_Q(\CV(\CK))+\dim C_Q(\CV(\CK^c)) = \dim C_Q(\CV) - 1$, where~$\CK^c$ denotes the family of complements~$K_x^c$ of~$K_x$ in~$J_x=\{1,\dots,n_x\}$.
\end{enumerate}
For the Horn quiver~$H_s$, condition~\ref{it:ressayre cond 2} is a consequence of~\ref{it:ressayre cond 1}, but not in general (see end of \cref{sec:sun quiver}).

In practice, it can be difficult to determine when conditions~\ref{it:ressayre cond 1} and~\ref{it:ressayre cond 2} are satisfied.
It is often easier to use accelerated Fourier-Motzkin elimination on the complete (but, in general, redundant) set of inequalities associated to $Q$-intersecting~$\boldOmega(\CK)$ with $\edim_Q(\CK,\CJ)=0$ to obtain a complete set of irredundant inequalities characterizing the cone~$C_Q(\CV)$ (see also~\cite{VW}).

The cone $\Sigma_Q(\CV)$ is, by definition, the intersection of~$C_Q(\CV)$ with~$\z^*$.
Here, we embed $\z^*$ into the dual of the Lie algebra of the maximal torus of~$\GL_Q(\CV)$ via $\boldomega \mapsto \boldlambda$, where $\lambda_x(1)=\dots=\lambda_x(n_x)=\omega_x$.
We note that, for a general quiver~$Q$, this intersection can be reduced to~$\{0\}$.
We can characterize~$\Sigma_Q(\CV)$ by restricting a complete set of defining inequalities of the cone~$C_Q(\CV)$ to~$\z^*$, such as our \cref{eq:lambda eq,eq:lambda ieq}.
If $\CK=(K_x)$ is a family of subsets~$K_x\subseteq\{1,\dots,n_x\}$ and $\boldlambda$ as above, then $\sum_{k\in K_x}\lambda_x(k) = |K_x| \omega_x$.
Moreover, if $\CK$ is $Q$-intersecting, then $\alpha_x=|K_x|$ defines a Schofield subdimension vector~$\boldalpha$, and any Schofield subdimension vector of~$\boldn$ can be obtained in this way.
It follows that the cone~$\Sigma_Q(\CV)$ is determined by the inequalities
\begin{align*}
  \sum_{x\in Q_0} \alpha_x \omega_x \leq 0,
\end{align*}
where $\boldalpha$ ranges over all Schofield subdimension vectors of~$\boldn$, together with the equation
\begin{align*}
  \sum_{x\in Q_0} n_x \omega_x = 0.
\end{align*}
In this way, we recover the description of~$\Sigma_Q(\CV)$ due to Derksen-Weyman~\cite{MR1758750} and Schofield-van den Bergh~\cite{MR1908144}.
Irredundant inequalities are described in~\cite{MR1758750} when $\boldn$ is a Schur root.

\section{Sun Quiver}\label{sec:sun quiver}
We now discuss the `sun quiver' introduced in~\cite{Collins}:
\begin{center}
\begin{tikzpicture}\small
  \node (x1) at (0,1) {$1$};
  \node (x2) at (0.86602540378,0.5) {$2$};
  \node (x3) at (0.86602540378,-0.5) {$3$};
  \node (x4) at (0,-1) {$4$};
  \node (x5) at (-0.86602540378,-0.5) {$5$};
  \node (x6) at (-0.86602540378,0.5) {$6$};
  \draw[->] (x1) edge (x2);
  \draw[->] (x3) edge (x2);
  \draw[->] (x3) edge (x4);
  \draw[->] (x5) edge (x4);
  \draw[->] (x5) edge (x6);
  \draw[->] (x1) edge (x6);
\end{tikzpicture}
\end{center}
The sun quiver has a discrete rotation symmetry ($x \mapsto x+2$) and a reflection symmetry that interchanges $2\leftrightarrow6$ and $3\leftrightarrow5$.

The family $\CJ=(\{1,2\},\dots,\{1,2\})$ and its dimension vector~$(2,\dots,2)$ respect both symmetries.
We use \cref{thm:horn concrete} to compute the $Q$-intersecting subfamilies~$\CK\subseteq_Q\CJ$.
Up to symmetry, there are 113 subfamilies, corresponding to 39 Schofield subdimension vectors. 
The latter are given by the following list:
\begin{align*}
&(0, 0, 0, 0, 0, 0),&
&(0, 0, 0, 0, 0, 1),&
&(0, 0, 0, 0, 0, 2),&
&(0, 0, 0, 1, 0, 1),&\\
&(0, 0, 0, 1, 0, 2),&
&(0, 0, 0, 1, 1, 1),&
&(0, 0, 0, 1, 1, 2),&
&(0, 0, 0, 2, 0, 2),&\\
&(0, 0, 0, 2, 1, 2),&
&(0, 0, 0, 2, 2, 2),&
&(0, 1, 0, 1, 0, 1),&
&(0, 1, 0, 1, 0, 2),&\\
&(0, 1, 0, 1, 1, 1),&
&(0, 1, 0, 1, 1, 2),&
&(0, 1, 0, 2, 0, 2),&
&(0, 1, 0, 2, 1, 2),&\\
&(0, 1, 0, 2, 2, 2),&
&(0, 1, 1, 1, 0, 2),&
&(0, 1, 1, 1, 1, 1),&
&(0, 1, 1, 1, 1, 2),&\\
&(0, 1, 1, 2, 0, 2),&
&(0, 1, 1, 2, 1, 1),&
&(0, 1, 1, 2, 1, 2),&
&(0, 1, 1, 2, 2, 2),&\\
&(0, 2, 0, 2, 0, 2),&
&(0, 2, 0, 2, 1, 2),&
&(0, 2, 0, 2, 2, 2),&
&(0, 2, 1, 1, 1, 2),&\\
&(0, 2, 1, 2, 1, 2),&
&(0, 2, 1, 2, 2, 2),&
&(0, 2, 2, 2, 2, 2),&
&(1, 1, 1, 1, 1, 1),&\\
&(1, 1, 1, 1, 1, 2),&
&(1, 1, 1, 2, 1, 2),&
&(1, 1, 1, 2, 2, 2),&
&(1, 2, 1, 2, 1, 2),&\\
&(1, 2, 1, 2, 2, 2),&
&(1, 2, 2, 2, 2, 2),&
&(2, 2, 2, 2, 2, 2).&
\end{align*}
Up to symmetry, there are 59 $Q$-intersecting subfamilies~$\CK$ that satisfy the condition~$\edim_Q(\CK,\CJ)=0$.
They are given by
\begin{align*}
&(\emptyset,\emptyset,\emptyset,\emptyset,\emptyset,\emptyset),&
&(\emptyset,\emptyset,\emptyset,\emptyset,\emptyset,1),&
&(\emptyset,\emptyset,\emptyset,\emptyset,\emptyset,12),&\\
&(\emptyset,\emptyset,\emptyset,1,\emptyset,1),&
&(\emptyset,\emptyset,\emptyset,1,\emptyset,12),&
&(\emptyset,\emptyset,\emptyset,1,2,2),&\\
&(\emptyset,\emptyset,\emptyset,2,1,2),&
&(\emptyset,\emptyset,\emptyset,1,2,12),&
&(\emptyset,\emptyset,\emptyset,2,1,12),&\\
&(\emptyset,\emptyset,\emptyset,12,\emptyset,12),&
&(\emptyset,\emptyset,\emptyset,12,1,12),&
&(\emptyset,\emptyset,\emptyset,12,12,12),&\\
&(\emptyset,1,\emptyset,1,\emptyset,1),&
&(\emptyset,1,\emptyset,1,\emptyset,12),&
&(\emptyset,1,\emptyset,1,2,2),&\\
&(\emptyset,1,\emptyset,2,1,2),&
&(\emptyset,1,\emptyset,1,2,12),&
&(\emptyset,1,\emptyset,2,1,12),&\\
&(\emptyset,1,\emptyset,12,\emptyset,12),&
&(\emptyset,1,\emptyset,12,1,12),&
&(\emptyset,1,\emptyset,12,12,12),&\\
&(\emptyset,1,2,2,\emptyset,12),&
&(\emptyset,2,1,2,\emptyset,12),&
&(\emptyset,1,2,2,2,2),&\\
&(\emptyset,2,1,2,2,2),&
&(\emptyset,2,2,1,2,2),&
&(\emptyset,1,2,2,2,12),&\\
&(\emptyset,2,1,2,2,12),&
&(\emptyset,2,2,1,2,12),&
&(\emptyset,2,2,2,1,12),&\\
&(\emptyset,1,2,12,\emptyset,12),&
&(\emptyset,2,1,12,\emptyset,12),&
&(\emptyset,1,2,12,1,2),&\\
&(\emptyset,1,2,12,2,1),&
&(\emptyset,2,1,12,1,2),&
&(\emptyset,1,2,12,1,12),&\\
&(\emptyset,2,1,12,1,12),&
&(\emptyset,1,2,12,12,12),&
&(\emptyset,2,1,12,12,12),&\\
&(\emptyset,12,\emptyset,12,\emptyset,12),&
&(\emptyset,12,\emptyset,12,1,12),&
&(\emptyset,12,\emptyset,12,12,12),&\\
&(\emptyset,12,1,2,2,12),&
&(\emptyset,12,2,1,2,12),&
&(\emptyset,12,1,12,1,12),&\\
&(\emptyset,12,1,12,12,12),&
&(\emptyset,12,12,12,12,12),&
&(2,2,2,2,2,2),&\\
&(1,2,2,2,2,12),&
&(2,1,2,2,2,12),&
&(2,2,1,2,2,12),&\\
&(1,2,2,12,1,12),&
&(2,1,2,12,1,12),&
&(1,2,2,12,12,12),&\\
&(2,1,2,12,12,12),&
&(1,12,1,12,1,12),&
&(1,12,1,12,12,12),&\\
&(1,12,12,12,12,12),&
&(12,12,12,12,12,12),&
\end{align*}
where we again write $12$ instead of $\{1,2\}$ etc.~to improve readability.
For example, $(1,2,2,12,1,12)$ and $(2,1,2,12,1,12)$ are two (inequivalent) subfamilies that both correspond to the Schofield subdimension vector~$(1, 1, 1, 2, 1, 2)$.

We now compute the polyhedral cone characterizing the highest weights~$\boldlambda$ that appear in~$\Sym^*(\CH_Q(\CV))$, where $\CV=(\C^2,\dots,\C^2)$.
It is defined by the constraints in \cref{prp:cone} and the Weyl chamber inequalities~$\lambda_x(1)\geq\lambda_x(2)$ for each vertex~$x$.
The resulting cone has 36 extreme rays and 75 faces.
In addition to the Weyl chamber inequalities and the constraint $\sum_{x=1}^6 \sum_{a=1}^2 \lambda_x(a)=0$, a minimal complete set of defining inequalities is (up to symmetry) given by the following list
\begin{align*}
&\lambda_1(1) + \lambda_2(2) + \lambda_6(2) \leq 0,\\
&\lambda_1(2) + \lambda_2(1) + \lambda_6(2) \leq 0,\\
&\lambda_1(1) + \lambda_2(2) + \lambda_3(2) + \lambda_4(2) + \lambda_6(2) \leq 0,\\
&\lambda_1(2) + \lambda_2(1) + \lambda_3(2) + \lambda_4(2) + \lambda_6(2) \leq 0,\\
&\lambda_1(2) + \lambda_2(1) + \lambda_4(2) + \lambda_5(2) + \lambda_6(2) \leq 0,\\
& |\lambda_1| +  |\lambda_2| +  |\lambda_6| \leq 0,\\
&\lambda_1(1) +  |\lambda_2| + \lambda_3(1) + \lambda_4(2) + \lambda_6(2) \leq 0,\\
&\lambda_1(1) +  |\lambda_2| + \lambda_3(2) + \lambda_4(1) + \lambda_6(2) \leq 0,\\
&\lambda_1(2) +  |\lambda_2| + \lambda_3(2) + \lambda_4(1) + \lambda_6(1) \leq 0,\\
&\lambda_1(1) +  |\lambda_2| + \lambda_3(2) + \lambda_4(2) + \lambda_5(2) + \lambda_6(2) \leq 0,\\
&\lambda_1(1) + \lambda_2(2) + \lambda_3(2) +  |\lambda_4| + \lambda_5(2) + \lambda_6(2) \leq 0,\\
&\lambda_1(2) +  |\lambda_2| + \lambda_3(2) + \lambda_4(1) + \lambda_5(2) + \lambda_6(2) \leq 0,\\
& |\lambda_1| +  |\lambda_2| + \lambda_3(1) + \lambda_4(2) + \lambda_5(2) +  |\lambda_6| \leq 0,\\
& |\lambda_1| +  |\lambda_2| + \lambda_3(2) + \lambda_4(1) + \lambda_5(2) +  |\lambda_6| \leq 0,
\end{align*}
together with $\lambda_x(2)\geq0$ for odd~$x$ and $\lambda_x(1)\leq0$ for even~$x$.
We computed these inequalities using Fourier-Motzkin elimination starting from the conditions in \cref{prp:cone} for $Q$-intersecting families~$\CK$ with expected dimension zero and the Weyl chamber inequalities.
The above list coincides with Collins' updated result~\cite{Collins}, obtained by using the isomorphism between~$C_Q(\CV)$ and~$\Sigma_{\tilde Q}(\tilde\CV)$
described in \cref{sec:rep theo} and the Derksen-Weyman description of irredundant inequalities for~$\Sigma_{\tilde Q}(\tilde \CV)$ in terms of decompositions into Schur roots.

In this simple case, it is also feasible to apply (and verify) Ressayre's criterion for irredundancy.
All families~$\CK$ listed above satisfy Ressayre's condition~\ref{it:ressayre cond 1} on p.~\pageref{page:ressayre conditions}, except for the family~$\CK=(\{2\},\{2\},\{2\},\{2\},\{2\},\{2\})$, which leads to a variety~$\boldOmega(\CK)_v$ which generically consists of two points.
(Generically, the composition $v_{1\to6}^{-1} v_{5\to 6} v_{5\to4}^{-1} v_{3\to4} v_{3\to2}^{-1} v_{1\to2}$ has two one-dimensional eigenspaces~$S_1$, each of which gives rise to a point~$\CS\in\boldOmega(\CK)_v$.)
The corresponding inequality~$\sum_x \lambda_x(2) \leq 0$ indeed follows by adding the Weyl chamber inequalities~$\lambda_x(2)-\lambda_x(1)\leq 0$ to the equation~$\sum_x \lambda_x(1)+\lambda_x(2)=0$.

It is also not hard to see that if~$\CK$ is a family for which the undirected subgraph of the sun quiver obtained by erasing the vertices corresponding to empty sets (i.e., $K_x=\emptyset$) is a disconnected graph, then~$\CK$ (and also~$\CK^c$) do not satisfy Ressayre's condition~\ref{it:ressayre cond 2} for irredundancy.
For example, the inequalities $\lambda_4(1)\leq0$ and $\lambda_6(1)\leq0$ are irredundant inequalities associated to $(\emptyset,\emptyset,\emptyset,\{1\},\emptyset, \emptyset)$ and $(\emptyset,\emptyset,\emptyset,\emptyset,\emptyset,\{1\})$), respectively.
In contrast, the family $\CK=(\emptyset,\emptyset,\emptyset,\{1\},\emptyset,\{1\})$ satisfies condition~\ref{it:ressayre cond 1} but not condition~\ref{it:ressayre cond 2}, and the corresponding inequality~$\lambda_4(1)+\lambda_6(1)\leq0$ is redundant.

\emph{A priori} conditions for irredundancy have been given by Belkale-Kumar~\cite{BelkaleKumar}, Derksen-Weyman~\cite{MR1758750}, Knutson-Tao-Woodward~\cite{KnutsonTaoWoodward}, and Ressayre~\cite{ressayre2010geometric} in terms of Schubert calculus (for $GL(n)$, equivalently, in terms of Littlewood-Richardson coefficients).
They appear to be difficult to test in practice.

\section*{Acknowledgements}
It is a pleasure to acknowledge discussions with Giovanni Cerulli Irelli, Evgeny Feigin, Bernhard Keller, Shrawan Kumar, and Nicolas Ressayre.

V.~Baldoni acknowledges the MIUR Excellence Department Project awarded to the Department of Mathematics, University of Rome Tor Vergata, CUP E83C18000100006 and the partial support of a PRIN2015 grant.
M.~Walter acknowledges support by the NWO through Veni grant~680-47-459 and grant OCENW.KLEIN.267, by the Deutsche Forschungsgemeinschaft (DFG, German Research Foundation) under Germany's Excellence Strategy - EXC\ 2092\ CASA - 390781972, and by the European Research Council~(ERC) through ERC Starting Grant 101040907-SYMOPTIC.
\begin{bibdiv}
\begin{biblist}
\bib{quivershort}{article}{
  author={Baldoni, Velleda},
  author={Vergne, Mich\`{e}le},
  author={Walter, Michael},
  title={Horn inequalities and quivers},
  journal={\href{https://arxiv.org/abs/1804.00431}{arXiv:1804.00431}},
}

\bib{MR2177198}{article}{
  author={Belkale, Prakash},
  title={Geometric proofs of Horn and saturation conjectures},
  journal={J. Algebraic Geom.},
  volume={15},
  date={2006},
  pages={133--173},
}

\bib{BelkaleKumar}{article}{
  author={Belkale, Prakash},
  author={Kumar, Shrawan},
  title={Eigenvalue problem and a new product in cohomology of flag varieties},
  journal={Invent. Math.},
  volume={166},
  pages={185--228},
  year={2006},
}

\bib{BVW}{article}{
  author={Berline, Nicole},
  author={Vergne, Mich\`{e}le},
  author={Walter, Michael},
  title={The Horn inequalities from a geometric point of view},
  journal={Enseign. Math.},
  volume={63},
  pages={403--470},
  year={2017},
}
\bib{BR}{article}{
  author={Bertozzi, Maria},
  author={Reineke, Mark},
  title={Momentum map images of representation spaces of quivers},
  journal={\href{https://arxiv.org/abs/2001.08071v2}{arXiv:2001.08071v2}},
}

\bib{QuiverCellular}{article}{
  author={Giovanni Cerulli Irelli},
  author={Evgeny Feigin},
  author={Markus Reineke},
  title={Quiver Grassmannians and degenerate flag varieties},
  journal={Algebra Number Theory},
  volume={6},
  year={2012},
  pages={165--194},
}

\bib{Collins}{article}{
  author={Collins, Brett},
  title={Generalized Littlewood-Richardson coefficients for branching rules of $\GL(n)$ and extremal weight crystals},
  journal={\href{https://arxiv.org/abs/1801.09170v3}{arXiv:1801.09170v3}},
}

\bib{MR1834739}{article}{
   author={Crawley-Boevey, William},
   title={Geometry of the moment map for representations of quivers},
   journal={Compositio Math.},
   volume={126},
   date={2001},
   number={3},
   pages={257--293},
}


\bib{Cra-Bo-Ge}{inproceedings}{
  author={Crawley-Boevey,  William},
  author={Geiss, Christof},
  title={Horn's problem and semi-stability for quiver representations},
  booktitle={Proceedings of the Ninth International Conference on Representations of Algebras, Beijing Normal University},
  publisher={Beijing Normal University Press},
  pages={40--48},
  year={2002},
}

\bib{MR1758750}{article}{
  author={Derksen, Harm},
  author={Weyman, Jerzy},
  title={Semi-invariants of quivers and saturation for Littlewood-Richardson coefficients},
  journal={J. Amer. Math. Soc.},
  volume={13},
  date={2000},
  pages={467--479},
}

\bib{DSW}{article}{
  author={Derksen, Harm},
  author={Schofield, Aidan},
  author={Weyman, Jerzy},
  title={On the number of subrepresentations of a general quiver representation},
  journal={J. London Math. Soc.},
  volume={76},
  pages={135--147},
  year={2007},
}

\bib{GS1982convex}{article}{
  author={Guillemin, Victor},
  author={Sternberg, Shlomo},
  title={Convexity properties of the moment mapping},
  journal={Invent. math.},
  volume={67},
  date={1982},
  pages={419--513},
}

\bib{GS1982qr}{article}{
  author={Guillemin, Victor},
  author={Sternberg, Shlomo},
  title={Geometric quantization and multiplicities of group representations},
  journal={Invent. math.},
  volume={67},
  date={1982},
  pages={515--538},
}

\bib{MR0140521}{article}{
  author={Horn, Alfred},
  title={Eigenvalues of sums of Hermitian matrices},
  journal={Pacific J. Math.},
  volume={12},
  date={1962},
  pages={225--241},
}

\bib{MR1315461}{article}{
   author={King, A. D.},
   title={Moduli of representations of finite-dimensional algebras},
   journal={Quart. J. Math. Oxford Ser. (2)},
   volume={45},
   date={1994},
   number={180},
   pages={515--530},
}

\bib{MR1671451}{article}{
  author={Knutson, Allen},
  author={Tao, Terence},
  title={The honeycomb model of $\GL_n(\C)$ tensor products. I. Proof of the saturation conjecture},
  journal={J. Amer. Math. Soc.},
  volume={12},
  date={1999},
  pages={1055--1090},
}

\bib{KnutsonTaoWoodward}{article}{
  author={Knutson, Allen},
  author={Tao, Terence},
  author={Woodward, Chris},
  title={The honeycomb model of $\GL_n(\C)$ tensor products. II. Puzzles determine facets of the Littlewood-Richardson cone},
  journal={J. Amer. Math. Soc.},
  volume={17},
  date={2004},
  pages={19--48},
}

\bib{NessMumford84}{article}{
  author={Ness, Linda},
  title={A stratification of the null cone via the moment map},
  note={With an appendix by David Mumford},
  journal={Amer. J. Math.},
  volume={106},
  date={1984},
  pages={1281--1329},
}

\bib{reineke2013every}{article}{
  title={Every projective variety is a quiver Grassmannian},
  author={Reineke, Markus},
  journal={Algebr. Represent. Theory},
  pages={1--2},
  year={2013},
}

\bib{MR2875833}{article}{
  author={Ressayre, Nicolas},
  title={GIT-cones and quivers},
  journal={Math. Z.},
  volume={270},
  date={2012},
  pages={263--275},
}

\bib{ressayre2010geometric}{article}{
  title={Geometric invariant theory and the generalized eigenvalue problem},
  author={Ressayre, Nicolas},
  journal={Invent. Math.},
  volume={180},
  pages={389--441},
  year={2010},
  publisher={Springer},
}

\bib{ressayre2011geometric}{article}{
  title={Geometric invariant theory and generalized eigenvalue problem II},
  author={Ressayre, Nicolas},
  journal={Ann. Inst. Fourier},
  volume={61},
  pages={1467--1491},
  year={2011}
}

\bib{ressayrepc}{unpublished}{
  title={private communication},
  author={Ressayre, Nicolas},
  year={2018}
}

\bib{MR1162487}{article}{
  author={Schofield, Aidan},
  title={General representations of quivers},
  journal={Proc. London Math. Soc. (3)},
  volume={65},
  date={1992},
  pages={46--64},
}

\bib{MR1908144}{article}{
  author={Schofield, Aidan},
  author={van den Bergh, Michel},
  title={Semi-invariants of quivers for arbitrary dimension vectors},
  journal={Indag. Math. (N.S.)},
  volume={12},
  date={2001},
  pages={125--138},
}

\bib{sherman1}{article}{
  author={Sherman, Cass},
  title={Geometric Proof of a Conjecture of King, Tollu, and Toumazet},
  journal={\href{https://arxiv.org/abs/1505.06551}{arXiv:1505.06551}},
}

\bib{MR3633317}{article}{
  author={Sherman, Cass},
  title={Quiver generalization of a conjecture of King, Tollu, and Toumazet},
  journal={J. Algebra},
  volume={480},
  date={2017},
  pages={487--504},
}

\bib{VW}{article}{
  author={Vergne, Mich\`ele},
  author={Walter, Michael},
  title={Inequalities for Moment Cones of Finite-Dimensional Representations},
  journal={J. Symplectic Geom.},
  volume={15},
  date={2017},
  pages={1209--1250},
}
\end{biblist}
\end{bibdiv}

\end{document}